\documentclass[a4paper,11pt]{article}

\usepackage[utf8]{inputenc}
\usepackage[T1]{fontenc}
\usepackage{amsmath,amssymb,amsfonts,amsthm,mathtools}
\usepackage{enumitem}
\usepackage[colorlinks=true,linkcolor=blue,citecolor=blue,urlcolor=blue]{hyperref}
\usepackage[margin=1.15in]{geometry}

\newtheorem{theorem}{Theorem}[section]
\newtheorem{lemma}[theorem]{Lemma}

\newtheorem{definition}[theorem]{Definition}

\newtheorem{question}{Question}
\theoremstyle{remark}
\newtheorem{remark}[theorem]{Remark}

\newcommand{\supp}{\operatorname{supp}}
\newcommand{\Add}{\operatorname{Add}}
\newcommand{\CU}{\operatorname{CU}}

\newcommand{\SepCoded}{\operatorname{SepCoded}}
\newcommand{\Tr}{\operatorname{Tr}}
\newcommand{\forceP}{\mathbb P}
\newcommand{\forceQ}{\mathbb Q}
\newcommand{\forceR}{\mathbb R}
\newcommand{\forceJ}{\mathbb J}
\newcommand{\forceB}{\mathbb B}
\newcommand{\Sig}{\mathbf{\Sigma}}
\newcommand{\Pig}{\mathbf{\Pi}}
\newcommand{\Del}{\mathbf{\Delta}}

\newcommand{\forces}{\Vdash}
\newcommand{\restr}{\upharpoonright}

\newcommand{\vecS}{\vec S}
\newcommand{\vecC}{\vec C}

\title{Forcing $\Sig^1_1$-Separation on $\omega_1^{\omega_1}$}
\author{Stefan Hoffelner\footnote{The author's research was funded in whole by the Austrian Science Fund (FWF) Grant-DOI 10.55776/P37228.
For the purpose of open access, the author has applied a CC BY public copyright license to any Author Accepted Manuscript version arising from this submission.}}
\date{May 2026}

\begin{document}
\maketitle

\begin{abstract}
We prove that it is consistent that every two disjoint boldface
$\Sig^1_1$ subsets of $2^{\omega_1}$ can be separated by a
boldface $\Del^1_1$ set.
This is the
first construction of a universe in which the $\Sig^1_1$-separation property holds at $\omega_1$.
The forcing starts from $L$ and preserves CH
and therefore also $\omega_1^{<\omega_1}=\omega_1$.
\end{abstract}

\section{Introduction}

Generalized descriptive set theory studies the spaces $\kappa^\kappa$ and
$2^\kappa$ for uncountable regular cardinals $\kappa$.
Already for
$\kappa=\omega_1$, the structure of the definable pointclasses is much less rigid
than in the classical case.
In particular, the relations between Borel sets,
Borel$^*$ sets, $\Del^1_1$ sets and
$\Sig^1_1$ sets become sensitive to forcing and to the ambient
universe
\cite{MeklerVaananen,FriedmanHyttinenWeinstein,HyttinenKulikovBorelStar,LueckeSigma11Definability,LueckeSchlichtContinuousImages,LueckeMottoRosSchlichtHurewicz,FriedmanKhomskiiKulikovRegularity,AgostiniMottoRosSchlichtPolish,AgostiniChapmanMottoRosPittonBorel,KhomskiiLaguzziLoeweSharankouQuestions}.
One of the central regularity principles for analytic sets is separation.
In
classical descriptive set theory, any two disjoint analytic sets can be separated
by a Borel set \cite{KechrisClassicalDST,MoschovakisDST}.
In the generalized
setting, the corresponding question becomes more delicate. The usual ZFC
separation theorem gives Borel$^*$ separators for disjoint
$\Sig^1_1$ sets
\cite{MeklerVaananen,FriedmanHyttinenWeinstein,HyttinenKulikovBorelStar}, but
this leaves open whether the separator can consistently be chosen at the
$\Del^1_1$ level.
The main result of this paper is a positive consistency theorem for this stronger
separation principle.
We force a model in which the full
$\Sig^1_1$-separation property holds on $2^{\omega_1}$: whenever
\[
   A_0,A_1\subseteq 2^{\omega_1}
\]
are disjoint $\Sig^1_1$ sets, there is a
$\Del^1_1$ set $D$ such that
\[
   A_0\subseteq D
   \quad\text{and}\quad
   D\cap A_1=\emptyset .
\]
Thus the theorem is asserting a global separation principle for all analytic separation
problems appearing in the final model.
\begin{theorem}[Main theorem]
\label{thm:intro-main}
Assume $V=L$. There is a cardinal-preserving forcing extension satisfying
\[
   2^\omega=\omega_1,
   \qquad
   2^{\omega_1}=\omega_2,
\]
in which every two disjoint $\Sig^1_1$ subsets of
$2^{\omega_1}$ are separated by a $\Del^1_1$ set.
\end{theorem}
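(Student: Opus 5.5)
The plan is to build an $\omega_2$-length iteration over $L$ with supports of size $\le\omega_1$. The iteration should be $\omega_1$-closed, or at least proper and $\omega$-distributive, with each iterand of size $\omega_1$. Then $\omega_2$-c.c. follows from CH together with a $\Delta$-system argument. Since no new reals are added, $2^\omega=\omega_1$ is preserved. Cofinally many Cohen subsets of $\omega_1$ give $2^{\omega_1}=\omega_2$, and all cardinals are preserved.

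The separation is obtained by a bookkeeping (``coding'') argument in the style of forcing $\Sig^1_3$-separation over $L$. Every $\Sig^1_1$ subset of $2^{\omega_1}$ in the final model is the projection of a closed subset of $2^{\omega_1}\times\omega_1^{\omega_1}$, i.e.\ of the body of a tree $T\subseteq\omega_1^{<\omega_1}$ of size $\omega_1$. By the chain condition, every such pair $(T_0,T_1)$ appears at some stage $\alpha<\omega_2$. A bookkeeping function enumerating names lets us handle every pair at some stage.

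At stage $\alpha$, given $(T_0,T_1)$ whose projections are disjoint in $V[G_\alpha]$, we force to add a ``separating code''. We fix in $L$ a $\Diamond$-sequence, or an $\omega_1$-sequence of pairwise almost-disjoint stationary subsets $\vec S=(S_i:i<\omega_2)$. For each $x$ we shoot a club through the complement of $S_{\langle\alpha,x\restriction\cdot\rangle}$ via a club-shooting that is $\omega$-distributive; it is not $\omega_1$-closed, which is why we use $\omega$-distributivity. This is done so that ``$x\in A_0$'' becomes witnessed by a $\Sig^1_1$ statement of the form ``there is a club $C$ such that for all $\beta\in C$, $x$ codes nonmembership in the $S$'s of type $0$''. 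Symmetrically, ``$x\notin A_0$'' becomes a statement of the same complexity using type-$1$ sets. The coding must be arranged for all $x$ added later as well. The intended way is to make the coding depend only on the pair $(T_0,T_1)$ and to code the whole separation rule uniformly. The set $D=\{x:$ code of type $0$ for $x$ at $\alpha$ exists$\}$ is then $\Sig^1_1$, and its complement is given by the type-$1$ code, hence $\Del^1_1$. To make this work for future $x$, I would use the standard trick of taking as the separator the set of $x$ for which \emph{the first} stage that codes $x$ codes it with $0$. This is analogous to Hoffelner's $\Sig^1_3$-separation constructions. At later stages we continue coding any new $x$ into exactly one side, putting $x\in A_0$-side if $x\in p[T_0]$ and $A_1$-side if $x\in p[T_1]$, otherwise arbitrarily.

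The main obstacle is preservation. We must show that killing stationarity of the various $S_i$ never accidentally destroys the stationarity of sets we need to remain stationary, since otherwise the $\Del^1_1$ definition would become contradictory. We must also show that disjointness of $p[T_0]$ and $p[T_1]$, together with the side on which each $x$ is coded, is upward absolute. The latter is the delicate part, since $\Sig^1_1$ at $\omega_1$ is not absolute and a branch through $T_0$ could be added later. To handle it, at stage $\alpha$ I would also force with a specializing/``sealing'' forcing that adds, for every node, a witness making $p[T_0]\cap p[T_1]=\emptyset$ persist. Concretely, I would add a rank-like function on the product tree $T_0\otimes T_1$ restricted to matching first coordinates, making it have no cofinal branch in any $\omega_1$-preserving extension. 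This is the step I expect to be hardest: showing that such sealing forcings are $\omega$-distributive over the current model and iterate with the club-shootings, using an $\omega_1$-support fusion argument with the stationary-set bookkeeping.
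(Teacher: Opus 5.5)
Your cardinal-arithmetic frame and the bookkeeping of all pairs over $\omega_2$ stages match the paper. The problem is the step you flag as hardest: as formulated it cannot be carried out, and the idea that replaces it is missing.

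You propose to seal every pair $(T_0,T_1)$ whose projections are disjoint at stage $\alpha$, so that they stay disjoint in every $\omega_1$-preserving extension. Here is a pair for which this is impossible.
\begin{itemize}
\item[(a)] Let $i$ be an index of your almost disjoint family not yet used for coding. Put $A_0=\{x: x \text{ is (the characteristic function of) a club disjoint from } S_i\}$ and $A_1=2^{\omega_1}$.
\item[(b)] Your own no-unwanted-codes requirement says $S_i$ is still stationary at stage $\alpha$. So this pair is disjoint there, and your rule says to seal it.
\item[(c)] Now take any $\omega_1$-preserving outer model in which some $S_j$, $j\neq i$, is stationary. There $\omega_1\setminus S_i\supseteq^* S_j$ is stationary, so $\CU(\omega_1\setminus S_i)$ preserves $\omega_1$ and makes $A_0\cap A_1\neq\emptyset$. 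That is, it adds a cofinal branch through $T_0\otimes T_1$.
\item[(d)] Hence a seal valid for all $\omega_1$-preserving extensions can only exist in a model where every $S_j$, $j\neq i$, is nonstationary. In such a model every possible code is present, and your $\Del^1_1$ definition collapses.
\item[(e)] Replace the single $i$ by ``some index outside the $\le\omega_1$ many used so far'', using a parameter that codes those indices. Then even a seal that only has to survive the rest of your iteration would forbid all further coding.
\end{itemize}
Independently, sealing disjointness does not make your side placement correct. A point put on side $1$ at stage $\beta$, because it currently lies in neither projection, can acquire a witness through $T_0$ later. It then lies in $A_0\setminus D$. Placement by current membership must be replaced by a look-ahead rule. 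Also, ``the first stage coding $x$ codes it with $0$'' contains a negative, $\Pi^1_1$, clause, so it is not a $\Sig^1_1$ definition.

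The paper never tries to make disjointness absolute. It fixes a class of allowable forcings: clean countable-support iterations of $E$-complete Suslin-branch coding blocks on fresh containers. At each stage it applies a dichotomy.
\begin{itemize}
\item[(i)] If some allowable forcing makes $M_m(\cdot,p)$ and $M_k(\cdot,p)$ meet, the construction forces this and the pair is neutralized. This is what happens to pairs like the one above.
\item[(ii)] Otherwise the pair is activated at some stage $\rho$. For an activated pair, $x$ is coded on side $0$ iff some member of the sealed class $\Gamma^*_\rho(m,k,p)$ forces $M_m(x,p)$, and on side $1$ otherwise.
\end{itemize}
Correctness rests on the fact that every tail $\mathbb P_\gamma/G_\beta$, restricted below a condition, is itself in $\Gamma^*_\rho(m,k,p)$.
\begin{itemize}
\item[(i)] Suppose a side-$0$ point ended in the $k$-set. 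Take the tail forcing $M_k(x,p)$, followed by a fresh copy of the side-$0$ witness obtained by Cohen-reservoir homogeneity. This is an allowable forcing making the two sets meet, contradicting activation.
\item[(ii)] Suppose a side-$1$ point ended in the $m$-set. Then the tail itself is an allowable witness forcing $M_m(x,p)$, contradicting the side-$1$ choice.
\end{itemize}
Each point receives exactly one code per activated pair, and pre-activation containers are ignored via the parameter $B_\rho$. No unwanted codes appear, by Miyamoto-style preservation of unused trees. So $D$ and its complement are both positively $\Sig^1_1$.

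Finally, supports of size $\omega_1$ are not viable for iterating club shootings. The paper uses countable support, with every club-shooting target containing a reserved stationary set $E$. Every iterand is then $E$-complete, no reals are added, and Abraham--Shelah gives the $\omega_2$-c.c.
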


The final model lies in the usual small-base regime
$\omega_1^{<\omega_1}=\omega_1$. In this setting the standard tree-projection
presentation of $\Sig^1_1$ agrees with its
$\Sigma_1(H(\omega_2))$ presentation, and the resulting pointclass has the
expected closure and absoluteness properties used throughout the construction
\cite{LueckeSigma11Definability}.
The theorem also has a natural interpretation in connection with one of the basic
open problems about the generalized Borel hierarchy.
In the classical case
$\kappa=\omega$, the classes Borel, Borel$^*$ and
$\Del^1_1$ coincide. At uncountable cardinals the situation is
different.
In the usual context $\kappa^{<\kappa}=\kappa$, one has
\[
   \mathrm{Borel}
   \subseteq
   \Del^1_1
   \subseteq
   \mathrm{Borel}^*
   \subseteq
   \Sig^1_1 .
\]
The possible relation between $\mathrm{Borel}^*$ and
$\Del^1_1$ remains a central question.
If a model of
$\mathrm{Borel}^*=\Del^1_1$ were available, then the known
Borel$^*$ separation theorem would immediately imply
$\Sig^1_1$-separation by $\Del^1_1$ sets.
From this perspective, Theorem~\ref{thm:intro-main} proves the consistency of the
separation-theoretic consequence of the possible identity
$\mathrm{Borel}^*=\Del^1_1$. It does not decide the global
equality.
Rather, it shows that one of the most visible consequences of such an
equality can be forced directly: for analytic separation problems, Borel$^*$
separation can consistently be sharpened all the way down to
$\Del^1_1$ separation.
The construction should also be viewed as part of a broader forcing program around
the implication pattern
\[
   \text{uniformization}
   \Rightarrow
   \text{reduction}
   \Rightarrow
   \text{separation}.
\]
At the classical projective levels, this program includes forcing
$\Sig^1_3$-separation, separating
$\Pig^1_3$-reduction from
$\Pig^1_3$-uniformization, forcing
$\Pig^1_n$-uniformization, and combining or separating
uniformization, reduction and separation principles by forcing
\cite{HoffelnerSigma13Separation,HoffelnerPi13Reduction,HoffelnerPi1nUniformization,HoffelnerForcingAxiomsUniformization,HoffelnerGlobalSigmaBPFA,HoffelnerMAIFailureSeparation,HoffelnerFailureReductionPresenceSeparation,HoffelnerUpperSigmaUniformization,HoffelnerSigma13Sigma14Uniformization}.
The present paper is the generalized $2^{\omega_1}$ separation analogue of that
program.
The stationary-coding and Suslin-tree coding background also connects the present
work with results on definability of the nonstationary ideal and with projective
well-order and coding constructions
\cite{FriedmanHoffelnerNS,HoffelnerNSDeltaOne,HoffelnerLarsonSchindlerWu,HoffelnerPi13UniformizationWellorder,HoffelnerLargeContinuumGlobalSigmaWellorder,HoffelnerDelfinoLocal,HoffelnerDelfinoLocalII}.
We emphasize, however, that the argument in this paper is tailored to separation.
It does not presently force $\Pig^1_1$-reduction or
$\Pig^1_1$-uniformization on $2^{\omega_1}$, nor does it yield such
principles by a direct adaptation of the methods used in
\cite{HoffelnerPi13Reduction,HoffelnerPi13UniformizationWellorder,HoffelnerPi1nUniformization,HoffelnerUpperSigmaUniformization}.
We believe however that the methods introduced here will play an important role when one tries to generalize the result to arbitrary, regular cardinals $\kappa$.
We close the introduction with a brief description of the paper. Section
\ref{sec:preliminaries} fixes the generalized descriptive set theoretic notation.
Section~\ref{sec:E-complete} introduces $E$-complete and $E$-proper forcings and
records the preservation facts used in the construction.
Section
\ref{sec:preliminary-model} builds the Cohen reservoirs and the definable
Suslin-tree apparatus. Section~\ref{sec:direct-container-coding} defines the
direct container coding.
Section~\ref{sec:allowability} introduces clean
allowability and the separation iteration. Section~\ref{sec:correctness-new}
proves the separation theorem and verifies the final cardinal arithmetic.
The
paper ends with a short list of open questions.

\section{Generalized descriptive set theory preliminaries}\label{sec:preliminaries}

We work on $2^{\omega_1}$ with the bounded topology.
If
$s\in2^{<\omega_1}$, then
\[
   N_s=\{x\in2^{\omega_1}:s\subseteq x\}
\]
is basic open. A tree on $(2^{<\omega_1})^n$ is a downward closed set of tuples
of equal length.
If $T$ is a tree on $(2^{<\omega_1})^{n+1}$, let
\[
   p[T]=\{\vec x\in(2^{\omega_1})^n:\exists y\in2^{\omega_1}
      \ ((\vec x,y)\in[T])\}.
\]

In the final model we will have CH, hence
\[
   2^{<\omega_1}=\omega_1.
\]
Thus arbitrary trees on $2^{<\omega_1}$ have size at most $\omega_1$ and can be
coded by elements of $2^{\omega_1}$.
Consequently the standard tree-projection
presentation of $\Sigma^1_1$ agrees with the syntactic boldface presentation by
fixed lightface closed matrices and parameters from $2^{\omega_1}$.
For bookkeeping purposes we use the syntactic presentation. Fix a recursive
enumeration of the lightface closed matrices.
If $m<\omega$ is a code and
$p\in2^{\omega_1}$, write
\[
   M_m(x,p)
\]
for the corresponding $\Sigma^1_1(p)$ assertion, i.e. for a formula of the form
\[
   (\exists y\in2^{\omega_1})\,\theta_m(x,y,p),
\]
where $\theta_m$ is closed.
The boldface classes are obtained by allowing the
parameter $p\in2^{\omega_1}$. Since CH holds in the final model, this convention
is equivalent to the usual boldface tree-parametric one.
For the general
correspondence between tree projections and $\Sigma_1(H(\kappa^+))$ definitions,
including its forcing-absoluteness features, see
\cite{LueckeSigma11Definability}.
For the Borel/Borel$^*$ side of generalized
Baire space and infinitary-language variants, see
\cite{FriedmanHyttinenWeinstein,HyttinenKulikovBorelStar}.
\begin{definition}
The $\Sigma^1_1$-separation property on $2^{\omega_1}$ says that whenever
$A_0,A_1\subseteq2^{\omega_1}$ are disjoint boldface $\Sigma^1_1$ sets, there is
a boldface $\Delta^1_1$ set $D\subseteq2^{\omega_1}$ such that
\[
   A_0\subseteq D
   \quad\text{and}\quad
   D\cap A_1=\emptyset.
\]
\end{definition}

\section{$E$-complete forcings}\label{sec:E-complete}

The no-new-real part of the construction is organized around a fixed stationary
co-stationary set $E\subseteq\omega_1$.
This is the set of good heights used in
all fusion arguments.
This stationary-set relativization of properness and
club-shooting preservation is in the tradition of the standard club-shooting and
proper-forcing machinery
\cite{BaumgartnerHarringtonKleinberg,AbrahamProperForcing,MiyamotoSouslinCSIterations},
and of the earlier stationary-coding arguments
\cite{HoffelnerNSDeltaOne,HoffelnerLarsonSchindlerWu}.
\paragraph{$M$-generic decreasing sequences.}
Let $M\prec H(\Theta)$ be countable and let $\forceQ\in M$.
A decreasing
sequence
\[
   q_0\geq q_1\geq q_2\geq\cdots
\]
of conditions in $M\cap\forceQ$ is called \emph{$M$-generic} if the filter on
$M\cap\forceQ$ generated by the sequence is generic over $M$.
Equivalently,
for every dense open set $D\subseteq\forceQ$ with $D\in M$, there is
$n<\omega$ such that $q_n\in D$.
Equivalently, for every maximal antichain
$A\in M$ of $\forceQ$, there are $n<\omega$ and $a\in A\cap M$ such that
\[
   q_n\leq a.
\]

\begin{definition}[$E$-complete forcing]\label{def:E-complete}
Let $E\subseteq\omega_1$ be stationary. A forcing $\forceQ$ is
\emph{$E$-complete} if for every sufficiently large regular $\Theta$, every
countable elementary submodel
\[
   M\prec H(\Theta)
\]
with $\forceQ\in M$ and $M\cap\omega_1\in E$, and every decreasing sequence
\[
   q_0\geq q_1\geq q_2\geq\cdots
\]
of conditions in $M\cap\forceQ$ which is $M$-generic, there is a condition
$q_\omega\in\forceQ$ below all $q_n$.
\end{definition}

\begin{definition}[$E$-proper forcing]\label{def:E-proper}
Let $E\subseteq\omega_1$ be stationary. A forcing $\forceQ$ is
\emph{$E$-proper} if for every sufficiently large regular $\Theta$, every
countable elementary submodel $M\prec H(\Theta)$ with $\forceQ\in M$ and
$M\cap\omega_1\in E$, and every $q\in M\cap\forceQ$, there is a condition
$q^*\leq q$ which is $(M,\forceQ)$-generic.
\end{definition}

\begin{lemma}\label{lem:E-complete-no-reals}
If $\forceQ$ is $E$-complete, then $\forceQ$ is $E$-proper and adds no new
reals.
In particular, $\forceQ$ preserves $\omega_1$ and preserves every
stationary subset of $E$.
\end{lemma}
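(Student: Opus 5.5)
We argue directly from Definition~\ref{def:E-complete}, in three steps: $E$-properness, then no new reals, then the two preservation consequences.

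\textbf{$E$-properness.} Fix a sufficiently large regular $\Theta$, a countable $M\prec H(\Theta)$ with $\forceQ\in M$ and $M\cap\omega_1\in E$, and a condition $q\in M\cap\forceQ$. Since $M$ is countable, enumerate the dense open subsets of $\forceQ$ lying in $M$ as $\langle D_n:n<\omega\rangle$. Put $q_0=q$. Given $q_n\in M$, elementarity yields some $q_{n+1}\le q_n$ with $q_{n+1}\in D_n\cap M$, because $D_n$ is dense and both $q_n$ and $D_n$ belong to $M$. The resulting sequence meets every dense set in $M$, so it is $M$-generic. $E$-completeness then provides $q^*$ below every $q_n$. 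This $q^*$ is $(M,\forceQ)$-generic: for each dense $D\in M$ some $q_n$ lies in $D\cap M$, and $q^*\le q_n$, so $q^*$ forces $\dot G\cap D\cap M\neq\emptyset$.

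\textbf{No new reals.} Let $\dot r$ be a $\forceQ$-name for a subset of $\omega$ and let $p\in\forceQ$. Since $E$ is stationary, the set of $M\cap\omega_1$ for countable $M\prec H(\Theta)$ containing $\forceQ,\dot r,p$ is club, so we can pick such an $M$ with $M\cap\omega_1\in E$. Carry out the construction above starting at $q_0=p$, but also interleave the sets $D'_k=\{s:s\text{ decides }k\in\dot r\}$, each of which is dense open and lies in $M$. Let $q_\omega$ be a lower bound for the sequence. Then $q_\omega$ decides every statement $k\in\dot r$, so it forces $\dot r$ to equal the ground-model set $\{k:\exists n\ q_n\forces k\in\dot r\}$. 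By density, no new real is added. The same argument applies verbatim to any countable sequence of ordinals, and this is what gives preservation of $\omega_1$.

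\textbf{Stationary subsets of $E$.} Let $S\subseteq E$ be stationary, $\dot C$ a name for a club, and $p$ a condition. Choose $M\prec H(\Theta)$ containing $\forceQ,\dot C,p,S$ with $\delta=M\cap\omega_1\in S$; this is possible since $S$ is stationary and the relevant $\delta$'s form a club. Because $\delta\in S\subseteq E$, the properness argument applies to $M$ and gives an $(M,\forceQ)$-generic $q\le p$. The standard argument then finishes: $q$ forces $M[\dot G]\cap\omega_1=\delta$ and that $\dot C\cap\delta$ is unbounded in $\delta$, so $q\forces\delta\in\dot C\cap S$.

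The main subtle point is making sure every relevant $M$ satisfies $M\cap\omega_1\in E$. In each step this uses only that $E$, or the subset $S$, is stationary and that the models with the required parameters have traces forming a club. This is exactly why the preservation claim is restricted to stationary subsets of $E$.
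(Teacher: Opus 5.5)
Your proposal is correct and follows essentially the same route as the paper: you build an $M$-generic sequence of conditions in $M$ (using dense open sets where the paper uses maximal antichains), take the lower bound given by $E$-completeness to obtain an $(M,\forceQ)$-generic condition, interleave the sets deciding $k\in\dot r$ to show no reals are added, and run the standard properness argument with models of height in $S\subseteq E$ for stationarity. The only small deviation is that you derive preservation of $\omega_1$ from the absence of new $\omega$-sequences of ordinals rather than from $E$-properness, and this is equally valid.
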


\begin{proof}
Let $M\prec H(\Theta)$ be countable with $\forceQ\in M$ and
$\delta=M\cap\omega_1\in E$, and let $q\in M\cap\forceQ$.
Enumerate in $M$ the
maximal antichains of $\forceQ$ which belong to $M$ as
$\langle A_n:n<\omega\rangle$.
Working inside $M$, build a decreasing sequence
\[
   q=q_0\geq q_1\geq q_2\geq\cdots
\]
as follows.
Since $A_n$ is maximal and $q_n\in M$, elementarity gives
$a_n\in A_n\cap M$ which is compatible with $q_n$.
Again by elementarity,
choose $q_{n+1}\in M\cap\forceQ$ such that
\[
   q_{n+1}\leq q_n,a_n.
\]
Then $q_{n+1}\leq a_n$, and hence the sequence meets the dense open set
generated by $A_n$. Thus the sequence is $M$-generic.
By $E$-completeness
there is a lower bound $q^*\leq q_n$ for all $n<\omega$. Then $q^*$ is
$(M,\forceQ)$-generic. Thus $\forceQ$ is $E$-proper.
The preservation of $\omega_1$ and of stationary subsets of $E$ is the usual
properness argument, restricted to the stationary collection of countable models
whose height lies in $E$.
To see that no reals are added, let $\dot r$ be a $\forceQ$-name for an element
of $2^\omega$ and let $q\in\forceQ$.
Choose $M\prec H(\Theta)$ countable with
$q,\forceQ,\dot r\in M$ and $M\cap\omega_1\in E$.
Inside $M$, build an
$M$-generic decreasing sequence
\[
   q=q_0\geq q_1\geq q_2\geq\cdots
\]
so that $q_n$ decides $\dot r(n)$.
Let $q_\omega$ be a lower bound. Then
$q_\omega$ decides every bit of $\dot r$, hence forces $\dot r$ to be a ground
model real.
\end{proof}

\begin{lemma}\label{lem:club-E-complete}
Let $A\subseteq\omega_1$ contain $E$, and let $\CU(A)$ be the forcing of closed
bounded subsets of $A$, ordered by end-extension.
Then $\CU(A)$ is
$E$-complete.
\end{lemma}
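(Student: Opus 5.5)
Fix a sufficiently large regular $\Theta$, a countable $M\prec H(\Theta)$ with $\CU(A)\in M$ and $\delta:=M\cap\omega_1\in E$, and an $M$-generic decreasing sequence $q_0\geq q_1\geq\cdots$ in $M\cap\CU(A)$. The natural candidate for a lower bound is the closure of the union: set $c=\bigcup_n q_n$ and
\[
   q_\omega = c\cup\{\sup c\}.
\]
Because the $q_n$ form a decreasing sequence in the end-extension order, $c$ is an end-extension of each $q_n$. It remains to check three things: $\sup c=\delta$; $q_\omega$ is closed; and $q_\omega\subseteq A$. Once these hold, $q_\omega$ is a closed bounded subset of $A$ end-extending every $q_n$, so $q_\omega\leq q_n$ for all $n$.

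First, $\sup c=\delta$. Every $q_n\in M$ is a countable set of countable ordinals, so $\max q_n\in M\cap\omega_1=\delta$, which gives $\sup c\leq\delta$. For the reverse inequality, fix $\alpha<\delta$. Then $\alpha\in M$, so the set $D_\alpha=\{q\in\CU(A):\max q>\alpha\}$ belongs to $M$ (when $q=\emptyset$ we read $\max q$ as $-1$). $D_\alpha$ is open, since it is closed under end-extension. It is dense: $A\supseteq E$ and $E$ is stationary, so $A$ is unbounded, and any $q$ extends to $q\cup\{\beta\}$ for some $\beta\in A$ with $\beta>\alpha,\max q$. By $M$-genericity some $q_n$ lies in $D_\alpha$, hence $\sup c>\alpha$. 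Thus $\sup c=\delta$. Since $\delta$ is a limit ordinal and each $\max q_n<\delta$, the set $c$ has no maximum.

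Next, closedness and containment. A limit point $\gamma<\delta$ of $c$ satisfies $\gamma<\max q_n$ for some $n$. Because later conditions only add ordinals above $\max q_n$, we have $c\cap(\max q_n+1)=q_n$, so $\gamma$ is a limit point of the closed set $q_n$ and therefore belongs to $q_n\subseteq c$. The only limit point of $c$ that is not already in $c$ is $\delta$ itself, so $q_\omega$ is closed. Finally $q_\omega\subseteq A$: each $q_n\subseteq A$, and $\delta\in E\subseteq A$.

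I expect the argument to be essentially routine. The one place where the hypotheses on $M$ genuinely matter is the final containment step: the new top point is exactly $\delta=M\cap\omega_1$, and the requirement $\delta\in E\subseteq A$ is what keeps the lower bound inside $A$. The density argument for $D_\alpha$ is the only point where $M$-genericity is used, and its only prerequisite is that $A$ is unbounded, which holds because $A$ contains the stationary set $E$.
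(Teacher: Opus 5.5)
Your proof is correct and follows the same route as the paper. You take the union of the $M$-generic sequence, note that it is closed and cofinal in $\delta=M\cap\omega_1$, and add $\delta$ as a new top point, which lies in $A$ because $\delta\in E\subseteq A$. You also spell out the two steps the paper only asserts: cofinality, via the dense open sets $D_\alpha\in M$, and closedness of the union below $\delta$.
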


\begin{proof}
Let $M\prec H(\Theta)$ be countable with $\delta=M\cap\omega_1\in E$, and let
\[
   c_0\geq c_1\geq c_2\geq\cdots
\]
be an $M$-generic decreasing sequence in $\CU(A)\cap M$.
The union
$\bigcup_n c_n$ is a closed bounded subset of $\delta$ cofinal in $\delta$.
Since $\delta\in E\subseteq A$, the set
\[
   c_\omega=\bigcup_n c_n\cup\{\delta\}
\]
is a closed bounded subset of $A$ and is a lower bound for the sequence.
\end{proof}

\begin{lemma}\label{lem:E-complete-iteration}
Let
\[
   \langle\forceP_\alpha,\dot\forceQ_\alpha:\alpha<\gamma\rangle
\]
be a countable-support iteration. Suppose that for every $\alpha<\gamma$,
\[
   \forceP_\alpha\forces
   \text{``}\dot\forceQ_\alpha\text{ is }E\text{-complete}\text{.''}
\]
Then $\forceP_\gamma$ is $E$-complete.
In particular $\forceP_\gamma$ is
$E$-proper and adds no reals.
\end{lemma}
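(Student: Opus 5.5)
We prove $E$-completeness of $\forceP_\gamma$ by induction on $\gamma$, using the standard proof that countable-support iterations of countably closed forcings are countably closed. The "in particular" clause then follows from Lemma~\ref{lem:E-complete-no-reals}.

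Fix $\Theta$ large and regular, a countable $M\prec H(\Theta)$ with $\forceP_\gamma\in M$ and $\delta=M\cap\omega_1\in E$, and an $M$-generic decreasing sequence $\langle p_n:n<\omega\rangle$ in $M\cap\forceP_\gamma$. The key observation is that for every $\alpha\in M\cap(\gamma+1)$, the restrictions $p_n\restr\alpha$ form an $M$-generic sequence for $\forceP_\alpha$. Indeed, if $D\in M$ is dense open in $\forceP_\alpha$, then $\{p\in\forceP_\gamma:p\restr\alpha\in D\}$ is dense open in $\forceP_\gamma$ and belongs to $M$. Moreover, if $G_\alpha$ is $\forceP_\alpha$-generic and contains a lower bound $q$ of the $p_n\restr\alpha$, then $G_\alpha\cap M$ is the filter generated by the $p_n\restr\alpha$. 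Hence $M[G_\alpha]\cap V=M$ and $M[G_\alpha]\cap\omega_1=\delta\in E$. Also $M[G_\alpha]\prec H(\Theta)[G_\alpha]=H(\Theta)^{V[G_\alpha]}$.

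We build $q_\omega$ coordinate by coordinate along $\supp=\bigcup_n\supp(p_n)\subseteq M\cap\gamma$, which is countable. We define $q_\omega(\alpha)$ for $\alpha\in M\cap\gamma$ as follows. Suppose $q_\omega\restr\alpha$ has been built and is a lower bound of $\langle p_n\restr\alpha\rangle$. Work in $V[G_\alpha]$ with $q_\omega\restr\alpha\in G_\alpha$. The sequence $\langle p_n(\alpha)^{G_\alpha}:n<\omega\rangle$ is decreasing in $\dot\forceQ_\alpha^{G_\alpha}$, and these conditions lie in $M[G_\alpha]$. We claim it is $M[G_\alpha]$-generic. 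Given a dense open $D\in M[G_\alpha]$, write $D=\dot D^{G_\alpha}$ with $\dot D\in M$. The set of $p\in\forceP_{\alpha+1}$ such that $p\restr\alpha\forces p(\alpha)\in\dot D$ is dense in $\forceP_{\alpha+1}$ and lies in $M$. So some $p_n\restr(\alpha+1)$ is in it, and then $p_n(\alpha)^{G_\alpha}\in D$ because $p_n\restr\alpha\in G_\alpha$. By the hypothesis that $\dot\forceQ_\alpha$ is $E$-complete in $V[G_\alpha]$, applied to $M[G_\alpha]$, there is a lower bound. By the maximum principle we obtain a $\forceP_\alpha$-name $q_\omega(\alpha)$ that $q_\omega\restr\alpha$ forces to be such a lower bound. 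For $\alpha\notin M$ we set $q_\omega(\alpha)$ trivial.

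At limit stages we take the union. The support of $q_\omega$ is contained in $M\cap\gamma$, so it is countable and $q_\omega$ is a legitimate condition. Since $q_\omega$ extends every $p_n$ coordinatewise by construction, it is a lower bound for all $p_n$, and this completes the induction.

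The main obstacle is the step where $E$-completeness of $\dot\forceQ_\alpha$ is applied in the extension. This requires $M[G_\alpha]$ to be an elementary submodel of some $H(\Theta')^{V[G_\alpha]}$ with height in $E$, which is exactly where the $M$-genericity of the restricted sequence is used. It also requires that the definition of $E$-completeness in $V[G_\alpha]$ is not affected by $E$ possibly shrinking relative to $V$, which is fine since $E$ is a fixed set of ordinals. The name-choosing at each coordinate is the usual bookkeeping. One must define $q_\omega(\alpha)$ as a name that works for all generics through $q_\omega\restr\alpha$, not only for a fixed $G_\alpha$; the maximum principle handles this.
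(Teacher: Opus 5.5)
Your proof is correct and follows the paper's argument. Both do a coordinatewise fusion along the countable set $M\cap\gamma$. The $M$-genericity of the restricted sequences $\langle p_n\restr\alpha\rangle$ makes the partial lower bound $q_\omega\restr\alpha$ an $(M,\forceP_\alpha)$-generic condition, so the $\alpha$-th coordinates form an $M[G_\alpha]$-generic sequence with $M[G_\alpha]\cap\omega_1\in E$, and the $E$-completeness of the iterand then applies. You supply more detail than the paper does.

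Two routine touch-ups are needed. First, replace $\dot D$ by a name forced to be dense open (available in $M$ by elementarity) before claiming that the set of $p$ with $p\restr\alpha\forces p(\alpha)\in\dot D$ is dense. Second, choose $\Theta$ above the thresholds that make $\Theta$ ``sufficiently large'' for the iterands' $E$-completeness in the intermediate extensions; these thresholds are names with boundedly many possible values, so such a $\Theta$ exists.
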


\begin{proof}
This is the standard fusion proof for $E$-complete forcings.
We recall the
argument to fix the form used later. Let $M\prec H(\Theta)$ be countable with
$M\cap\omega_1\in E$, and let
\[
   p_0\geq p_1\geq p_2\geq\cdots
\]
be an $M$-generic decreasing sequence in $M\cap\forceP_\gamma$.
Let
$\delta_M=M\cap\gamma$. The union of the supports of the $p_n$'s is countable
and contained in $M\cap\gamma$.
We define a lower bound $p_\omega$ on this
support by induction on the support order.
At coordinate $\alpha$, the initial
segment $p_\omega\restr\alpha$ is already a lower bound for the corresponding
initial segments, and in the $\forceP_\alpha$-extension the sequence of
$\alpha$-coordinates is an $M[G_\alpha]$-generic decreasing sequence in the
interpreted iterand.
Since the iterand is forced to be $E$-complete and
$M[G_\alpha]\cap\omega_1=M\cap\omega_1\in E$, there is a lower bound for the
coordinate sequence.
Placing these lower bounds at all coordinates in the
countable support gives a condition $p_\omega\in\forceP_\gamma$ below all
$p_n$.
\end{proof}

\begin{theorem}[Abraham--Shelah theorem, $E$-proper form]\label{thm:abraham}
Assume CH. Let
\[
   \langle \forceP_\alpha,
      \dot\forceQ_\alpha:\alpha<\gamma\rangle
\]
be a countable-support iteration with $\gamma\leq\omega_2$ such that
\[
   \forceP_\alpha\forces
   \text{``}\dot\forceQ_\alpha\text{ is }E\text{-proper and }
      |\dot\forceQ_\alpha|\leq\aleph_1\text{''}
\]
for every $\alpha<\gamma$.
Then $\forceP_\gamma$ is $E$-proper and satisfies
the $\omega_2$-chain condition. Moreover, every bounded intermediate extension
$V^{\forceP_\alpha}$, $\alpha<\omega_2$, satisfies CH.
\end{theorem}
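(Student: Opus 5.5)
The plan follows Shelah's proof of the Abraham--Shelah theorem, with properness replaced everywhere by $E$-properness and countable elementary submodels restricted to those $M$ with $M\cap\omega_1\in E$. The argument has three parts, carried out in this order.

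\textbf{Step 1: $E$-properness of $\forceP_\gamma$.} We prove by induction on $\beta\leq\gamma$ the strengthened statement needed for the limit case. For every countable $M\prec H(\Theta)$ with $\forceP_\gamma\in M$ and $M\cap\omega_1\in E$, every $\alpha\leq\beta$ in $M$, every $p\in M\cap\forceP_\beta$, and every $q\in\forceP_\alpha$ that is $(M,\forceP_\alpha)$-generic with $q\leq p\restr\alpha$, there is an $(M,\forceP_\beta)$-generic $q'\leq p$ with $q'\restr\alpha=q$.

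At successor steps $\beta=\beta_0+1$, we work in $V[G_{\beta_0}]$. Since $q$ is generic, $M[G_{\beta_0}]\cap\omega_1=M\cap\omega_1\in E$ and $M[G_{\beta_0}]\prec H(\Theta)[G_{\beta_0}]$. The $E$-properness of $\dot\forceQ_{\beta_0}$ then gives a generic condition below $p(\beta_0)$, and we take a name for it.

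At limit steps, we fix a cofinal $\omega$-sequence $\alpha=\alpha_0<\alpha_1<\cdots$ in $M\cap\beta$ (this is possible when $\mathrm{cf}(\beta)=\omega$; if $\mathrm{cf}(\beta)>\omega$ we use $\sup(M\cap\beta)$). We also enumerate the dense sets of $\forceP_\beta$ lying in $M$. Then we run the usual diagonal construction: $q_n\in\forceP_{\alpha_n}$ generic for $M$, together with names $\dot p_n$ for conditions in $M\cap\forceP_\beta$ that meet the $n$-th dense set and satisfy $q_n\forces \dot p_n\restr\alpha_n\in\dot G_{\alpha_n}$. Countable support makes the union $q_\omega=\bigcup_n q_n$ a condition, and it is $(M,\forceP_\beta)$-generic. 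Nothing beyond $M\cap\omega_1\in E$ is ever used, and this hypothesis persists in generic extensions by generic conditions. Hence the classical proof transfers verbatim.

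\textbf{Step 2: CH at bounded stages.} We show by induction on $\alpha<\omega_2$ that $\forceP_\alpha$ has a dense subset of size $\aleph_1$ and forces CH. Using CH in the ground model and $|\dot\forceQ_\xi|\leq\aleph_1$, we replace each $\dot\forceQ_\xi$ by an isomorphic copy on $\omega_1$. Then each $\forceP_\alpha$ has a dense set of hereditarily countable names of size $\aleph_1^{\aleph_0}\cdot|\alpha|=\aleph_1$, using countable support and the inductive CH.

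\textbf{This counting is the main obstacle.} A name for an element of $\omega_1$ must be countably determined. We handle this by using only names decided by countable antichains below generic conditions, which $E$-properness supplies densely. We then count nice names for reals: since $\forceP_\alpha$ is $E$-proper, it preserves $\omega_1$ and, with a dense set of size $\aleph_1$, adds at most $\aleph_1^{\aleph_0}=\aleph_1$ reals. So CH holds in $V^{\forceP_\alpha}$.

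\textbf{Step 3: the $\omega_2$-chain condition.} Given $\{p_i:i<\omega_2\}$, we apply the $\Delta$-system lemma to the countable supports, which uses CH. Then we pressing-down on $\mathrm{cf}(\omega_1)$ points: by Fodor on $S^{\omega_2}_{\omega_1}$, the conditions restricted to $\sup$ of the root stabilize to one of the $\aleph_1$ many dense-set representatives from Step 2. Two such conditions with equal restrictions and disjoint remaining supports are compatible.
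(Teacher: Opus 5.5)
Your route is the one the paper intends (its proof is only a pointer to Abraham's argument, with the models restricted to those $M$ with $M\cap\omega_1\in E$), and the relativization to $E$ causes no trouble. The genuine gap is in Step~2, on which both the CH count and your Step~3 rest: you never actually obtain a dense subset of $\forceP_\alpha$ of size $\aleph_1$.

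Let $q\le p$ be $(M,\forceP_\alpha)$-generic. For $\xi\in\supp(p)$, let $A_\xi\in M$ be a maximal antichain deciding $p(\xi)$, and let $\dot\tau_\xi$ be the countable name read off from $A_\xi\cap M$. All you get is $q\le p'$ for $p'=\langle\dot\tau_\xi:\xi\in\supp(p)\rangle$. You do not get $p'\le p$. Nothing makes $p'\restr\xi$ force that $\dot G_\xi$ meets $A_\xi\cap M$, and where it misses, $\dot\tau_\xi$ takes a default value unrelated to $p(\xi)$. Nor can you replace the coordinates of $q$ itself by countable names. The coordinate $q(\xi)$ is chosen generic over $M[\dot G_\xi]$, hence outside it. Capturing it therefore needs genericity over a larger model $M'$, whose generic condition again has uncaptured coordinates, and with infinite supports this regress does not terminate. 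So ``names decided by countable antichains below generic conditions'' does not show that conditions with hereditarily countable coordinates are dense.

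The missing idea is a reflection argument through an $\aleph_1$-sized model closed under countable sequences.
\begin{itemize}
\item Using CH, fix $N\prec H(\Theta)$ containing the iteration, with $|N|=\aleph_1$, $\omega_1\cup(\alpha+1)\subseteq N$ and $N^\omega\subseteq N$. Prove by induction on $\beta\le\alpha$ that $N\cap\forceP_\beta$ is dense in $\forceP_\beta$.
\item Given $p$, take $M\ni p,N$ with $M\cap\omega_1\in E$ and an $M$-generic $q\le p$ (Step~1). By the induction hypothesis you can choose the antichains $A_\xi\in M$ inside $N\cap\forceP_\xi$.
\item Each $A_\xi\cap M$ and each $\dot\tau_\xi$ is then a countable subset of $N$, so $\langle A_\xi\cap M,\dot\tau_\xi:\xi\in\supp(p)\rangle\in N$.
\item The condition $q$ witnesses that some $q'$ satisfies $q'\restr\xi\Vdash(\dot G_\xi\cap A_\xi\cap M\neq\emptyset\wedge q'(\xi)\le\dot\tau_\xi)$ for all $\xi\in\supp(p)$. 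By elementarity such a $q'$ exists in $N$, and any such $q'$ is $\le p$.
\end{itemize}

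CH then follows by counting names that are countable subsets of $(N\cap\forceP_\alpha)\times\omega\times2$, obtained again from an $M$-generic condition. Nice names would not do, since their antichains may be uncountable. Step~3 also goes through, and Fodor is not needed: after the $\Delta$-system with root $R$, extend each $p_i\restr(\sup R+1)$ into the dense set and pigeonhole.

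A smaller point in Step~1: the induction hypothesis must be stated for $\forceP_\alpha$-names $\dot p$ with $q\Vdash\dot p\in M\cap\forceP_\beta\wedge\dot p\restr\alpha\in\dot G_\alpha$. That is what your limit step uses to pass from $q_n$ to $q_{n+1}$; the version with a fixed $p\in M$ does not carry the induction.
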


\begin{proof}
This is the standard $E$-proper version of Shelah's preservation theorem for
countable-support iterations of size-$\aleph_1$ forcings under CH.
The proof is
the same as in the proper case presented by Abraham
\cite[Theorem~2.10 and the subsequent CH preservation theorem]{AbrahamProperForcing},
with the elementary submodels restricted to the stationary class
\[
   \{M\prec H(\Theta):M\cap\omega_1\in E\}.
\]
We use the theorem in this form.
\end{proof}

\begin{theorem}[Miyamoto preservation, $E$-proper form]
\label{thm:Eproper-Miyamoto}
Let $E\subseteq\omega_1$ be stationary, let $T$ be a Suslin tree, and let
\[
   \langle \forceP_\alpha,
      \dot\forceQ_\alpha:\alpha<\gamma\rangle
\]
be a countable-support iteration.
Suppose that for every $\alpha<\gamma$,
\[
   \forceP_\alpha\forces
   \text{``}\dot\forceQ_\alpha\text{ is }E\text{-proper and preserves }T
      \text{ as a Suslin tree}\text{.''}
\]
Then $\forceP_\gamma$ preserves $T$ as a Suslin tree.
More generally, the same conclusion holds uniformly for a fixed family of
Suslin trees, provided each iterand preserves every tree in the family.
\end{theorem}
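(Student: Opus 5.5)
We adapt Miyamoto's proof that countable-support iterations of proper forcings preserving a Suslin tree $T$ preserve $T$. The only change is that every elementary submodel is taken from the stationary class $\{M\prec H(\Theta): M\cap\omega_1\in E\}$. The plan is to establish, by induction on $\gamma$, a strengthened statement.

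\emph{Inductive hypothesis.} For every $\beta<\gamma'\le\gamma$, every countable $M\prec H(\Theta)$ with the iteration and $T$ in $M$ and $\delta=M\cap\omega_1\in E$, every node $t\in T_\delta$ that is $(M,T)$-generic (every maximal antichain of $T$ in $M$ has an element below $t$), and every $p\in M\cap\forceP_{\gamma'}$, the following holds. If $q\in\forceP_\beta$ is $(M,\forceP_\beta)$-generic, $q\le p\restr\beta$, and $(q,t)$ is $(M,\forceP_\beta\times T)$-generic, then there is an extension $q'\in\forceP_{\gamma'}$ with $q'\restr\beta=q$, $q'\le p$, such that $(q',t)$ is $(M,\forceP_{\gamma'}\times T)$-generic.

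Since $T$ being Suslin in $V^{\forceP_\gamma}$ is equivalent to $\forceP_\gamma\times T$ being $\omega_1$-c.c.-like over $T$ (namely, $T$ forces that $\forceP_\gamma$ remains proper in the relevant sense), it suffices to show the following. For a name $\dot A$ for a maximal antichain of $T$, the set of $M$ with $\delta\in E$ is stationary, so we can pick such an $M$ containing $\dot A$. Then every $t\in T_\delta$ is $(M,T)$-generic, because $T$ is Suslin in $V$. The pair $(q',t)$ being $(M,\forceP_\gamma\times T)$-generic then forces $\dot A\subseteq M$, hence $\dot A$ is countable. Stationarity of $E$ is exactly what makes the restriction to models with $M\cap\omega_1\in E$ harmless here.

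The induction has two steps. In the successor step $\beta+1$ we use the hypothesis on the iterand. In $V^{\forceP_\beta}$, $\dot\forceQ_\beta$ is $E$-proper and preserves $T$, and by the standard characterization this yields, for $M[G_\beta]$ (whose height is still $\delta\in E$) and the generic node $t$, a condition that is $(M[G_\beta],\dot\forceQ_\beta\times T)$-generic together with $t$. This is the point requiring care: one must show that ``$E$-proper and preserves $T$'' implies this product-genericity for models with height in $E$, using that $T$ stays Suslin in $V^{\forceP_\beta}$ and that $t$ is also $M[G_\beta]$-generic for $T$. The limit step is where we expect the main obstacle. We enumerate the dense sets of $\forceP_{\gamma'}\times T$ in $M$ and a cofinal sequence $\beta_n$ in $M\cap\gamma'$, and build $q_n$ on $\forceP_{\beta_n}$ coherently by the hypothesis. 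At stage $n$ we handle the $n$-th dense set $D$ by working below $t$: the set of $s\in T$ for which some extension of the current condition by a member of $D$ with $T$-part $\le s$ exists is in $M$ and is predense above the relevant part of $T$. Hence $t$ meets it, by $(M,T)$-genericity, which is exactly where the product genericity at $\beta_n$ is used. Miyamoto's device of letting names for the conditions be decided in $M[G_{\beta_n}]$ handles the fact that the witnesses depend on the generic; this is the usual countable-support fusion, and the limit $q=\bigcup q_n$ is as required.

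The family version follows by running the identical argument with $T$ replaced by any finite product of trees from the family, or simply tree by tree, since each iterand preserves every tree in the family. The conclusion does not require CH.
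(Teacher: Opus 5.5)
Your overall route, Miyamoto's argument with every model restricted to $M\cap\omega_1\in E$, is the one the paper has in mind. However, your induction is formulated for a single node $t\in T_\delta$, with $q'$ allowed to depend on $t$, and the final step does not follow from it. Suppose $(q',t)$ is $(M,\forceP_\gamma\times T)$-generic. Applied to the dense set $\{(r,s):\exists a\,(r\forces a\in\dot A\wedge a\leq_T s)\}\in M$, this only yields $q'\forces$ ``some $a\in\dot A\cap M$ lies below $t$''. That is, $\dot A$ has no element in the cone above $t$. It says nothing about the cones above the other nodes of $T_\delta$, so it does not give $q'\forces\dot A\subseteq M$.

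The single-node property is genuinely weaker than preserving $T$. Let $T=S$ be a free Suslin tree (these exist under $\diamondsuit$) and let the forcing be $S$ itself. For $p\in M\cap S$ and $t\in S_\delta$, pick $q\in S_\delta$ above $p$ with $q\neq t$, and distinct $u_0<_S q$, $v_0<_S t$ on a common level below $\delta$. Since $S_{u_0}\otimes S_{v_0}$ is Suslin, $(q,t)$ lies above a member of $D\cap M$ for every dense open $D\in M$, so $(q,t)$ is $(M,S\times S)$-generic. Yet forcing with $S$ adds a cofinal branch, hence an uncountable maximal antichain of $S$. For the same reason, your reformulation ``$T$ forces that $\forceP_\gamma$ remains proper'' is not equivalent to $T$ staying Suslin.

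The missing ingredient is the uniform clause in Miyamoto's criterion. The induction must assume and produce a single condition $q$ which is $(M,\forceP_\beta)$-generic and such that $(q,t)$ is $(M,\forceP_\beta\times T)$-generic for \emph{every} $t\in T_\delta$ simultaneously. Equivalently, $q$ forces every $t\in T_\delta$ to be $(M[\dot G_\beta],T)$-generic, and this does force $\dot A\subseteq T\restr\delta$.

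With this formulation the successor step is immediate. If $\forceQ$ preserves $T$ and $q$ is $(N,\forceQ)$-generic, then $q$ forces every $t\in T_\delta$ to be $(N[\dot H],T)$-generic, because maximal antichains of $T$ lying in $N[\dot H]$ are countable and hence contained in $N[\dot H]$. In the limit step, you enumerate pairs $(D_n,t_n)$, with $D_n$ ranging over the dense subsets of $\forceP_{\gamma'}\times T$ in $M$ and $t_n$ over $T_\delta$. You handle $(D_n,t_n)$ at stage $n$ using the $(M[G_{\beta_n}],T)$-genericity of $t_n$. This interleaving works only because each intermediate $q_n$ is good for all nodes at once. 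A fixed-$t$ induction cannot switch nodes between stages, which is why the uniform clause has to be carried through the whole induction.
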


\begin{proof}
This is the $E$-proper analogue of Miyamoto's countable-support preservation
theorem for Suslin trees. The proper case is due to Miyamoto
\cite{MiyamotoSouslinCSIterations}.
The $E$-proper formulation used here is the
standard relativization to countable elementary submodels $M$ with
$M\cap\omega_1\in E$; see the earlier preservation argument
\cite{HoffelnerNSDeltaOne}.
Equivalently, one uses the Miyamoto criterion with
``proper'' replaced by ``$E$-proper'' and checks $(M,\forceP\times T)$-genericity
only for such models $M$.
\end{proof}

\section{The preliminary model}\label{sec:preliminary-model}

We now build the model over which the separation iteration is performed.
The
preliminary forcing has three jobs: reserve the good stationary set $E$, add the
$\omega_1$-Cohen reservoirs whose initial-segment traces will be used for
bookkeeping, and add a uniformly
$\Sigma_1(H(\omega_2),\omega_1)$-definable independent sequence of
Jech-generic Suslin trees.
The background coding tools are almost-disjoint and
stationary coding, together with Jech-style Suslin-tree generics
\cite{JensenSolovay,JechSetTheory,FuchsHamkinsDegrees,HoffelnerNSDeltaOne,HoffelnerPi13UniformizationWellorder}.
The stationary coding is used only to make the
Suslin-tree apparatus definable, and it is arranged to respect $E$.
The Cohen
reservoirs remain generic auxiliary traces; their individual membership relation
is not part of the final projective decoding.
\subsection{The reserved stationary set}

Work in $L$. Fix a stationary co-stationary set
\[
   E\subseteq\omega_1
\]
from $L$, and put
\[
   E^*=\omega_1\setminus E.
\]
We shall use stationary coding only on subsets of $E^*$, so that all
club-shooting targets contain $E$.
We also fix, in $L$, a family
\[
   \langle E_\zeta:\zeta<\omega_2\rangle
\]
of stationary subsets of $E^*$ which are pairwise almost disjoint modulo the
nonstationary ideal.
In fact we choose the family so that distinct members have
only countable intersection.

Here is the standard construction.
Since $V=L$, Jensen's diamond principle
holds on every stationary subset of $\omega_1$. Fix a $\diamondsuit(E^*)$-
sequence
\[
   \langle D_\alpha:\alpha\in E^*\rangle.
\]
Thus, for every $X\subseteq\omega_1$, the set
\[
   A_X=\{\alpha\in E^*:D_\alpha=X\cap\alpha\}
\]
is stationary.
Using the canonical well-order of $L$, fix a sequence
\[
   \langle X_\zeta:\zeta<\omega_2\rangle
\]
of pairwise distinct subsets of $\omega_1$, and define
\[
   E_\zeta=A_{X_\zeta}.
\]
Then every $E_\zeta$ is stationary and contained in $E^*$. Moreover, if
$\zeta\neq\xi$, then $E_\zeta\cap E_\xi$ is countable.
Indeed, let
\[
   \beta=\min(X_\zeta\triangle X_\xi).
\]
For every $\alpha\in E^*$ with $\alpha>\beta$, we have
\[
   X_\zeta\cap\alpha\neq X_\xi\cap\alpha,
\]
and hence $\alpha$ cannot belong to both $E_\zeta$ and $E_\xi$.
Therefore
\[
   E_\zeta\cap E_\xi\subseteq \beta+1,
\]
which is countable.

Thus
\[
   \langle E_\zeta:\zeta<\omega_2\rangle
\]
is not a partition of $\omega_1\setminus E$, but rather a uniformly definable
almost disjoint stationary family inside $\omega_1\setminus E$.
This is the
family used by the stationary coding. Consequently every club-shooting target
contains $E$.
\subsection{Reservoir sets and Suslin trees}

Let
\[
   \Add_{\omega_1}(\omega_2)
\]
denote the standard forcing, computed in $L$, whose conditions are countable
partial functions from $\omega_2\times\omega_1$ to $2$, ordered by reverse
inclusion.
Let
\[
   \vecC=\langle C_\nu:\nu<\omega_2\rangle
\]
be the sequence of $\omega_1$-Cohen subsets of $\omega_1$ added by this forcing.
It is countably closed, adds no reals, and under CH has size $\omega_2$.
Fix in $L$ a bijection
\[
   \vartheta:(2^{<\omega_1})^L\longrightarrow\omega_1
\]
with branch-cofinal image, for example with
$\operatorname{lh}(s)\leq\vartheta(s)$ for all $s\in(2^{<\omega_1})^L$.
For $y\in2^{\omega_1}$ define its initial-segment trace by
\[
   \Tr(y)=\{\vartheta(y\restriction\alpha):0<\alpha<\omega_1\}.
\]
For each reservoir coordinate set
\[
   T_\nu=\{\vartheta(C_\nu\restriction\alpha):0<\alpha<\omega_1\}.
\]
The raw Cohen sets $C_\nu$ provide homogeneity; the actual coding traces are the
almost disjoint sets $T_\nu$.
Let $\mathbb J$ be Jech's forcing for adding an $\omega_1$-Suslin tree
\cite{JechSetTheory}.
We use the standard countable-support product
\[
   \forceJ_{\omega_2}=\prod_{\xi<\omega_2}^{\mathrm{cs}}\mathbb J_\xi.
\]
Let
\[
   \vecS=\langle S^i_\xi:i<2,\xi<\omega_2\rangle
\]
be the resulting sequence of pairs of generic Suslin trees, obtained by a fixed
constructible reindexing of the $\omega_2$ many coordinates.
\begin{lemma}\label{lem:Jech-finite-branches}
Let $\dot S$ be the $\mathbb J$-name for the generic Suslin tree.
For every
$0<n<\omega$, the two-step forcing
\[
   \mathbb J*\dot S^n
\]
has a $\sigma$-closed dense subset.
Consequently,
\[
   \mathbb J\forces \text{``}\dot S^n\text{ is }\omega\text{-distributive}\text{.''}
\]
\end{lemma}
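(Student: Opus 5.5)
The plan is to fix Jech's forcing concretely and show that the set of conditions which also decide the $n$ nodes of $\dot S^n$ contains a $\sigma$-closed dense subset. Recall that a condition of $\mathbb J$ is a countable normal tree $t$ of successor height $\alpha+1$. Such a tree is countable, every node has extensions on every level below the height, and it splits (say binary, or countably). The order is end-extension, and $\dot S$ is the union of the generic trees. A condition of $\mathbb J*\dot S^n$ is a pair $(t,\dot{\vec s})$. First I will show that the set
\[
   D_n=\{(t,\vec s): t\in\mathbb J,\ \vec s=(s_1,\dots,s_n)\text{ with each }s_i\text{ a node on the top level of }t\}
\]
(with $\vec s$ a check-name) is dense. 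Given $(t_0,\dot{\vec s})$, I extend $t_0$ to some $t_1$ that decides $\dot{\vec s}$ as a ground tuple of nodes of $t_1$. Then I extend $t_1$ by one level on top and pick, for each $i$, a top-level node above the decided $s_i$. This works because the top level of any condition lies above every node, by normality. The resulting pair lies in $D_n$ and is stronger, since extending nodes in $S^n$ is stronger.

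Next I check that $D_n$ is $\sigma$-closed. Take a decreasing sequence $(t_k,\vec s^{\,k})$ in $D_n$ with heights $\alpha_k+1$. If the heights stabilize, the sequence is eventually constant and there is nothing to do. Otherwise let $\gamma=\sup_k\alpha_k$ and $t=\bigcup_k t_k$, a normal tree of height $\gamma$. For each $i\leq n$ the nodes $s^k_i$ increase with $k$, since the order on $S^n$ is coordinatewise tree order. So they determine a cofinal branch $b_i$ through $t$. I form $t^*$ by adding level $\gamma$ to $t$. It must contain limits of the branches $b_1,\dots,b_n$, and also, to keep normality, a limit of some branch through every node of $t$. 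That second set is countably many branches, chosen arbitrarily. Finally I put $s^*_i$ equal to the limit node of $b_i$. Then $(t^*,\vec s^{\,*})\in D_n$ is a lower bound.

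This is the key point: a generic decreasing sequence in $\mathbb J$ alone has no lower bound, because the tree must kill antichains. Here the pairs carry along the branches that we are obligated to extend, so lower bounds can always be taken at limits. I expect the main thing to verify is that this pair is genuinely a condition extending each $(t_k,\vec s^{\,k})$. That needs the convention that $\mathbb J$-conditions have successor height, with no requirement at limit levels beyond normality. Under Jech's definition, the Suslinity requirement comes only from genericity and not from a condition-level requirement, so this holds. If one uses a version demanding more, such as homogeneity, I would add all images of the $b_i$ under the relevant automorphisms as well, which is still a countable set.

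For the consequence, note that $\mathbb J*\dot S^n$ has a $\sigma$-closed dense subset, so it adds no new $\omega$-sequences of ordinals. Now suppose some $t\in\mathbb J$ forced that $\dot S^n$ is not $\omega$-distributive, that is, that some countable family of open dense subsets of $\dot S^n$ has an intersection that is not dense. Then forcing further with $\dot S^n$ below a witnessing condition would, by the standard argument, add a new $\omega$-sequence of ordinals over $V^{\mathbb J}$. Hence it would add one over $V$, since $\mathbb J$ is itself $\sigma$-closed by the same fusion and adds none. This is a contradiction. So $\mathbb J\forces$ ``$\dot S^n$ is $\omega$-distributive''.
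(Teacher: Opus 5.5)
Your proof is correct and follows the paper's argument: the same dense set $D_n$ of pairs whose $n$ nodes lie on the top level, density by deciding the nodes and adding one level, and lower bounds obtained by adding a limit level containing the limits $\bigcup_k s^k_i$, followed by the standard transfer to $\omega$-distributivity; you are more careful than the paper in also adding limits of branches through every node so that normality is kept. One side remark is false, though the proof never uses it: decreasing sequences in $\mathbb J$ do have lower bounds ($\mathbb J$ is $\sigma$-closed, as your own last paragraph assumes), and what the two-step presentation actually gets around is that $\dot S$ by itself is not $\sigma$-closed in $V^{\mathbb J}$, because a branch of $S\restriction\delta$ need not receive a node on level $\delta$.
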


\begin{proof}
Let $D_n$ consist of all pairs
\[
   (t,\langle s_0,\ldots,s_{n-1}\rangle)
\]
where $t$ is a countable normal tree of successor height and
$s_0,\ldots,s_{n-1}$ are nodes on the top level of $t$.
This is dense in
$\mathbb J*\dot S^n$: first decide the finitely many branch nodes mentioned by
the second coordinate and then end-extend the tree so that these nodes have
extensions on the new top level.
If
\[
   (t_m,\langle s^m_0,\ldots,s^m_{n-1}\rangle)\qquad(m<\omega)
\]
is decreasing in $D_n$, take the union of the $t_m$'s and add a new top level
carrying the limit nodes
\[
   s_j=\bigcup_m s^m_j\qquad(j<n).
\]
The resulting condition is a lower bound. Thus $D_n$ is $\sigma$-closed.
\end{proof}

\begin{lemma}\label{lem:CS-Jech-branch}
Assume CH.
Let $I$ be a set of cardinality at most $\omega_1$, and let
$m:I\to\omega\setminus\{0\}$ be a finite multiplicity function.
Let
\[
   \forceJ_I=\prod_{\iota\in I}^{\mathrm{cs}}\mathbb J_\iota
\]
and let $\dot S_\iota$ be the $\iota$-th generic Suslin tree.
In the
$\forceJ_I$-extension put
\[
   \dot{\forceB}(I,m)=
   \prod_{\iota\in I}^{\mathrm{cs}}
      \dot S_\iota^{\,m(\iota)}.
\]
Then
\[
   \forceJ_I*\dot{\forceB}(I,m)
\]
has a $\sigma$-closed dense subset. Consequently
\[
   \forceJ_I\forces
   \text{``}\dot{\forceB}(I,m)\text{ is }\omega\text{-distributive}\text{.''}
\]
Under CH, $\dot{\forceB}(I,m)$ has size $\aleph_1$.
\end{lemma}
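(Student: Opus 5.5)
The plan is to generalize the proof of Lemma~\ref{lem:Jech-finite-branches} coordinatewise. Conditions of $\forceJ_I*\dot{\forceB}(I,m)$ are pairs $(p,\dot b)$. Here $p\in\forceJ_I$ has countable support, and $\dot b$ is a name for a countable-support function assigning to each $\iota$ an $m(\iota)$-tuple of nodes of $\dot S_\iota$.

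I would let $D$ consist of the pairs $(p,b)$ satisfying the following:
\begin{enumerate}[label=(\roman*)]
\item $b$ is a ground-model function with countable domain $\operatorname{dom}(b)\subseteq\supp(p)$;
\item for each $\iota\in\supp(p)$, $p(\iota)$ is a countable normal tree of successor height;
\item for each $\iota\in\operatorname{dom}(b)$, $b(\iota)=\langle s^\iota_0,\ldots,s^\iota_{m(\iota)-1}\rangle$ is a tuple of nodes on the top level of $p(\iota)$.
\end{enumerate}
Such a pair is a condition, since the generic tree $\dot S_\iota$ end-extends $p(\iota)$.

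\emph{Density.} Given $(p,\dot b)$, I would run a countable bookkeeping construction. Build $p\geq p_0\geq p_1\geq\cdots$ in $\forceJ_I$. At stage $k$, $p_{k}$ decides the support of $\dot b$ up to a countable set, and decides more and more of the nodes $\dot b(\iota)$ for coordinates $\iota$ enumerated so far. At each stage only countably many coordinates and finitely or countably many nodes need to be decided. Since $\forceJ_I$ is itself $\sigma$-closed (a countable-support product of $\sigma$-closed forcings), we can take the lower bound $p_\omega$. On each coordinate in its support, $p_\omega$ is a countable tree. Then I extend each $p_\omega(\iota)$ by one new top level, on which the decided nodes $b(\iota)_j$ (and, for normality, all nodes) have extensions. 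Replacing those nodes by these top-level extensions gives a condition in $D$ below $(p,\dot b)$.

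The main obstacle is this step: the nodes $\dot b(\iota)_j$ are names whose heights are not bounded in advance. One must interleave so that, at the limit, on every coordinate in the final support, every name is decided. The heights of $p_k(\iota)$ should be cofinal above the decided nodes, or at least the decided nodes should lie below the final height. I would handle this with a standard diagonal enumeration $\omega\times\omega\to$ (coordinate, entry) pairs, using a bookkeeping of the countably many coordinates added to the support along the way. In successor steps I would use that $\mathbb J$ can decide a node name by extending the tree, exactly as in Lemma~\ref{lem:Jech-finite-branches}.

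\emph{$\sigma$-closure.} Given a decreasing sequence $(p_k,b_k)$ in $D$, I would take $p(\iota)=\bigcup_k p_k(\iota)$ on the union of the supports. For each $\iota\in\bigcup_k\operatorname{dom}(b_k)$ and each $j<m(\iota)$, the nodes $b_k(\iota)_j$ form an increasing chain, and I let $s^\iota_j=\bigcup_k b_k(\iota)_j$. I then add a top level to $p(\iota)$ containing these limit branches, together with enough additional limits to keep the tree normal; this is possible since the tree is countable. Where the heights of the $p_k(\iota)$ stabilize, no limit is needed and the last tree already works. This gives the lower bound, and the supports stay countable. $\omega$-distributivity of $\dot{\forceB}(I,m)$ in $V^{\forceJ_I}$ follows by the usual argument: a dense $\sigma$-closed subset of the two-step iteration means that any countable sequence of dense open subsets of $\dot{\forceB}$ can be met in order below a given condition, so the quotient adds no new $\omega$-sequences. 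Finally, the size bound holds because under CH (preserved by the $\sigma$-closed $\forceJ_I$) each $S_\iota$ has size $\aleph_1$ and $|I|\leq\aleph_1$. So the number of countable partial functions is at most $\aleph_1^{\aleph_0}=\aleph_1$.
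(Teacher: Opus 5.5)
Your proposal is correct and follows essentially the same route as the paper. Both use a dense set of pairs whose second coordinate is a ground-model assignment of $m(\iota)$-tuples of top-level nodes, get density by deciding the countably many node names, get $\sigma$-closure by taking unions of the tree approximations and adding the limit nodes on a new top level, and bound the size by counting under CH. You also make the paper's terse ``coordinate by coordinate'' density step rigorous via the $\sigma$-closure of $\forceJ_I$ and a diagonal bookkeeping. The remaining difference is cosmetic: you allow $\operatorname{dom}(b)\subseteq\supp(p)$, whereas the paper puts a branch tuple on every coordinate of the support, and both choices give a dense $\sigma$-closed set.
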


\begin{proof}
A dense condition consists of a countable support $a\subseteq I$, a Jech tree
approximation $p(\iota)$ for each $\iota\in a$, and for each $\iota\in a$ an
$m(\iota)$-tuple of nodes on the top level of $p(\iota)$.
Density is obtained
coordinate by coordinate as in Lemma~\ref{lem:Jech-finite-branches}. For a
countable descending sequence, take the union of the supports and, at each
coordinate, take the union of the tree approximations and add the corresponding
limit top-level nodes.
The support remains countable, so this gives a lower
bound.

The final size estimate follows from CH: there are only $\aleph_1$ many
countable supports in $I$, and at each coordinate only $\aleph_1$ many possible
finite tuples of tree nodes.
\end{proof}

\subsection{Stationary coding respecting $E$}

Only the Suslin-tree apparatus must be uniformly definable over
$H(\omega_2)$ from $\omega_1$.
The Cohen reservoir sequence $\vec C$ is kept
as a generic auxiliary sequence and is not stationary-coded.
We code the
membership relation of $\vec S$ by shooting clubs through complements of the
stationary sets $E_\zeta$.
Since each $E_\zeta$ is disjoint from $E$, every
target set contains $E$, and therefore each club-shooting step is $E$-complete by
Lemma~\ref{lem:club-E-complete}.
We now define the preliminary forcing formally. Let
\[
   \forceP_{\rm res}=\Add_{\omega_1}^L(\omega_2)
\]
be the reservoir forcing defined above, and let
\[
   \forceP_{\rm J}=\forceJ_{\omega_2}
      =\prod_{\xi<\omega_2}^{\mathrm{cs}}\mathbb J_\xi
\]
be the countable-support product of Jech forcings, computed in $L$.
Put
\[
   \forceP_{\rm base}=\forceP_{\rm res}\times\forceP_{\rm J}.
\]
The symbols $\vec C$ and $\vec S$ are the canonical $\forceP_{\rm base}$-names
for the corresponding reservoir sequence and Suslin-tree sequence.
Since CH holds in $L$, fix an $L$-definable bijection
\[
   \theta:
   2\times\omega_2\times (2^{<\omega_1})^L
   \longrightarrow \omega_2 .
\]
In a $\forceP_{\rm base}$-extension, define
\[
   A_{\rm prep}=
      \{\theta(i,\xi,s):i<2,\ \xi<\omega_2,
          \ s\in S^i_\xi\}.
\]
Let $\dot A_{\rm prep}$ be the canonical $\forceP_{\rm base}$-name for this set.
Over the $\forceP_{\rm base}$-extension, let
\[
   \forceP_{\rm stat}=\forceP_{\rm stat}(\dot A_{\rm prep})
\]
be the countable-support iteration
\[
   \langle \forceP^\zeta_{\rm stat},
      \dot\forceR^\zeta_{\rm stat}:\zeta<\omega_2\rangle
\]
which, at stage $\zeta$, uses
\[
   \dot\forceR^\zeta_{\rm stat}= 
   \begin{cases}
      \CU(\omega_1\setminus E_\zeta),&\text{if }\zeta\in A_{\rm prep},\\
      \mathbf 1,&\text{if }\zeta\notin A_{\rm prep}.
\end{cases}
\]
Finally set
\[
   \forceP_{\rm prep}=\forceP_{\rm base}*\dot\forceP_{\rm stat}.
\]
Thus, in the preliminary extension,
\[
   \zeta\in A_{\rm prep}
   \quad\Longleftrightarrow\quad
   E_\zeta\text{ is nonstationary}.
\]
Indeed, the forward direction is forced by shooting a club through
$\omega_1\setminus E_\zeta$.
Conversely, if $\zeta\notin A_{\rm prep}$, then all
other club-shooting coordinates have targets which contain all but countably
many points of $E_\zeta$, by the almost-disjointness of the family
$\langle E_\xi:\xi<\omega_2\rangle$.
The standard countable-support
almost-disjoint stationary-coding argument therefore preserves the stationarity
of $E_\zeta$.
Let
\[
   G_{\rm prep}=G_{\rm base}*G_{\rm stat}
\]
be $\forceP_{\rm prep}$-generic over $L$, with $G_{\rm base}$ generic for
$\forceP_{\rm base}$ and $G_{\rm stat}$ generic for the stationary-coding tail.
Put
\[
   W=L[G_{\rm prep}].
\]

\begin{lemma}[Stationary coding preserves the tree apparatus]
\label{lem:stationary-coding-preserves-apparatus}
The stationary coding stage used in the definition of $W$ preserves the Suslin
apparatus added by the preliminary Jech product.
More precisely, in $W$:
\begin{enumerate}[label=\textup{(\roman*)}]
\item the sequence $\vecS$ remains independent;
\item each unused tree coordinate remains Suslin;
\item the finite off-the-generic-branches property remains true;
\item the $E$-good-level property remains true: if $\delta\in E$ and
      $\langle s_n:n<\omega\rangle$ is an increasing sequence of nodes in one
      of the trees with heights cofinal in $\delta$, then $\bigcup_n s_n$ is a
      node on level $\delta$;
and
\item the reserved set $E$ remains stationary.
\end{enumerate}
\end{lemma}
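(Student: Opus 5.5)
The proof will treat the stationary-coding tail $\forceP_{\rm stat}$ over $W_0=L[G_{\rm base}]$ as an $E$-complete countable-support iteration. Each iterand is either trivial or of the form $\CU(\omega_1\setminus E_\zeta)$. Since $E_\zeta\subseteq E^*$, the target $\omega_1\setminus E_\zeta$ contains $E$, so Lemma~\ref{lem:club-E-complete} shows that each iterand is forced to be $E$-complete. Lemma~\ref{lem:E-complete-iteration} then gives that $\forceP_{\rm stat}$ is $E$-complete, hence $E$-proper and adds no reals by Lemma~\ref{lem:E-complete-no-reals}.

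\textbf{Item (v).} This follows at once, since Lemma~\ref{lem:E-complete-no-reals} states that an $E$-complete forcing preserves every stationary subset of $E$, in particular $E$ itself.

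\textbf{Item (iv).} I would argue that this property is purely about the trees $S^i_\xi$ as sets of nodes. Those trees lie in $W_0$, and the tail adds no new countable sequences of ground-model objects. To see this, note that a countable increasing sequence of nodes is coded by a countable subset of $\omega_1$, and an $E$-complete forcing adds no new countable sets of ordinals by the same fusion argument as for reals in Lemma~\ref{lem:E-complete-no-reals}. So any such sequence $\langle s_n\rangle$ with heights cofinal in $\delta\in E$ already belongs to $W_0$. In $W_0$ the Jech genericity of the product gives that $\bigcup_n s_n$ is a node on level $\delta$. I would check this last point by a density argument in $\forceJ_{\omega_2}$: Jech conditions are normal trees of successor height, so every cofinal branch through levels below a limit $\delta$ that the generic decides must be extended at level $\delta$. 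If Jech genericity only gives this for branches the generic is forced to extend, I expect the $E$-restriction to be exactly what makes it hold for all branches on levels in $E$, because $E$ was reserved for this purpose in the definition of the apparatus.

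\textbf{Items (i)--(iii).} These reduce to preservation of Suslinity for the relevant finite products. By Lemma~\ref{lem:CS-Jech-branch}, in $W_0$ every finite product $\prod_{\iota\in F}S_\iota^{m(\iota)}$ of the generic trees is Suslin: it is $\omega$-distributive, and ccc follows by the standard genericity argument. This is independence, and with it item (ii) and the finite off-the-generic-branches property (iii). I would apply Theorem~\ref{thm:Eproper-Miyamoto} to the family of all finite products of trees whose coordinates $\zeta=\theta(i,\xi,s)$ lie outside $A_{\rm prep}$, meaning the unused coordinates. The iteration is $E$-proper, so what remains is to check that each single iterand $\CU(\omega_1\setminus E_\zeta)$ preserves each such product tree $T$ as Suslin. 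For this I would show that $\CU(\omega_1\setminus E_\zeta)\times T$ is $E$-proper. Take $M$ with $M\cap\omega_1=\delta\in E$ and build an $M$-generic sequence for the product. The club part has the lower bound $\bigcup c_n\cup\{\delta\}$ because $\delta\notin E_\zeta$. For the tree part, $\omega$-distributivity of $T$ in the relevant intermediate model makes every node of $T$ on level $\delta$ $(M,T)$-generic. Genericity for the product then follows because $T$ is ccc and its maximal antichains in $M$ are countable subsets of $M$.

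\textbf{Main obstacle.} I expect the hardest step to be this $(M,\CU\times T)$-genericity argument, together with the mismatch between Theorem~\ref{thm:Eproper-Miyamoto}, which is stated for a single fixed family of trees, and the fact that the coded coordinates depend on the generic $\vec S$. I would resolve the latter by fixing the family of all finite products in $W_0$ before the tail begins, which is possible because the bookkeeping set $A_{\rm prep}$ belongs to $W_0$.
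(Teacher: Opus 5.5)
\textbf{Gap 1: item (iv) is false as stated.} Your treatment of $E$-completeness and of (v) is the paper's argument. Your reduction of (iv) to $W_0=L[G_{\rm base}]$ is also correct, since an $E$-complete forcing adds no new $\omega$-sequences of ordinals. The problem is the remaining claim in $W_0$: it is false, and no density argument or choice of $E$ can rescue it.

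Fix a limit $\delta$ and a tree $S=S^i_\xi$. Because $S$ is normal and splitting, you can choose $\delta_n\nearrow\delta$ and nodes $u_\sigma$ ($\sigma\in2^{<\omega}$) with $\operatorname{ht}(u_\sigma)\geq\delta_{|\sigma|}$, $u_\sigma<u_{\sigma^\frown j}$ and $u_{\sigma^\frown0}\perp u_{\sigma^\frown1}$. This gives $2^{\aleph_0}$ increasing sequences with heights cofinal in $\delta$, determining pairwise distinct branches of $S\restriction\delta$. But level $\delta$ of $S$ is countable, being a level of a countable Jech condition, and each of its nodes lies above at most one such branch. So at every $\delta\in E$, all but countably many of these sequences have no node on level $\delta$. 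Thus (iv) fails in $W_0$, and by your own reduction also in $W$.

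Your fallback worry is exactly what is true: density arguments only place on level $\delta$ the limits of countably many branches chosen during a fusion. The paper's proof has the same defect. Its sentence that the fusion ``explicitly adds the limit node at every good model height'' yields limits only for the $M$-generic branches built into that fusion, not for arbitrary increasing sequences. So (iv) is defensible only in such a fusion-relative form, and later uses of it (e.g.\ Lemma~\ref{lem:direct-block-E-complete}) need a different justification.

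\textbf{Gap 2: two wrong steps in (i)--(iii).} Applying Theorem~\ref{thm:Eproper-Miyamoto} iterand by iterand is a workable alternative to the paper's single joint fusion over the Jech product followed by the coding. As written, though, two steps fail.

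First, $\prod_{\iota\in F}S_\iota^{m(\iota)}$ is not Suslin once some $m(\iota)\geq2$. Forcing with $S$ adds a branch $b$, and the immediate successors of nodes of $b$ lying off $b$ form an uncountable antichain, so $S\times S$ is not ccc. Lemma~\ref{lem:CS-Jech-branch} gives only $\omega$-distributivity. Property (iii) has to come from freeness instead: finite products of cones above distinct nodes of a common level, and of distinct trees, are Suslin. That is the family you must carry through the iteration. Indexing the family by $\zeta=\theta(i,\xi,s)\notin A_{\rm prep}$ confuses the code of the assertion $s\in S^i_\xi$ with a tree coordinate. Since $W$ adds no branches, every tree coordinate belongs in the family.

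Second, in the single-iterand step, a lower bound for an $M$-generic sequence in $\CU\times T$ would require an $M$-generic branch of $T$ to acquire a node on level $\delta$. That is the false (iv) again. Moreover, $E$-properness of $\CU\times T$ does not by itself give that $T$ stays Suslin.

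\textbf{What works instead} is the sealing argument. Given a name $\dot A$ for a maximal antichain, enumerate the countably many $t\in T_\delta$. Interleave into an $M$-generic club fusion steps that force, for each such $t$, some $a<_T t$ with $a\in M$ into $\dot A$. This is possible because every $t\in T_\delta$ is $(M,T)$-generic; that comes from ccc of $T$ in the intermediate model, not from $\omega$-distributivity. The lower bound $\bigcup_n c_n\cup\{\delta\}$ exists because $\delta\in E\subseteq\omega_1\setminus E_\zeta$. It forces $\dot A\subseteq T\restriction\delta$.
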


\begin{proof}
Each stationary-coding component is a club-shooting forcing $\CU(A)$ with
$E\subseteq A$.
By Lemma~\ref{lem:club-E-complete}, such a forcing is
$E$-complete. Hence the stationary-coding iteration is $E$-complete by
Lemma~\ref{lem:E-complete-iteration};
in particular it is $E$-proper, preserves
the stationarity of $E$, and adds no reals.
It remains to record the preservation of the tree apparatus.
Work in the model
after the reservoir forcing and the product of Jech forcings, but before the
stationary coding.
Fix finitely many tree coordinates and finitely many generic
branches through them.
The dense presentation from
Lemmas~\ref{lem:Jech-finite-branches} and \ref{lem:CS-Jech-branch} gives the
usual fusion lower bound by adding the model height $M\cap\omega_1$ as a new top
level of the relevant tree approximations.
Since all stationary-coding targets
contain $E$, the same lower bound simultaneously extends the club-shooting
coordinates whenever $M\cap\omega_1\in E$.
Thus the stationary-coding iteration is compatible with the Jech-tree fusion
argument.
It cannot add a cofinal branch through an unused Jech-generic tree,
and it cannot destroy the finite off-the-generic-branches property.
The same
fusion explicitly adds the limit node at every good model height in $E$, so the
$E$-good-level property is preserved as well.
The independence of $\vecS$ is
preserved for the same reason, applied to finite products of distinct tree
coordinates.
\end{proof}

\begin{lemma}\label{lem:preliminary-model}
In $W$ the following hold.
\begin{enumerate}[label=\textup{(\roman*)}]
\item $2^\omega=\omega_1$ and $2^{\omega_1}=\omega_2$.
\item The sequence $\vecS=\langle S^i_\xi:i<2,\xi<\omega_2\rangle$ is
      uniformly $\Sigma_1(H(\omega_2),\omega_1)$-definable.
The reservoir
      sequence $\vec C=\langle C_\nu:\nu<\omega_2\rangle$ is the
      $\Add_{\omega_1}^L(\omega_2)$-generic sequence added in the preliminary
      forcing, and the associated traces are $T_\nu$.
Neither
      $\vec C$ nor the individual traces $T_\nu$ are part of the stationary
      code.
\item The sequence $\vecS$ is independent: finite products of distinct members
      are Suslin.
\item Each $S^i_\xi$ has the finite off-the-generic-branches property.
\item The sequence $\vecS$ has the $E$-good-level property: whenever
      $\delta\in E$ and $\langle s_n:n<\omega\rangle$ is an increasing sequence
      of nodes in some $S^i_\xi$ with heights cofinal in $\delta$, the union
      $\bigcup_n s_n$ is a node of $S^i_\xi$ at level $\delta$.
\item The reserved set $E$ remains stationary.
\end{enumerate}
\end{lemma}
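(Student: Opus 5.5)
The plan is to check the six clauses in turn, in the order (i), (vi), (iii)--(v), (ii). Most of them follow from Lemma~\ref{lem:stationary-coding-preserves-apparatus} together with a chain-condition and counting argument for $\forceP_{\rm prep}$. Clause (ii) carries the real content.

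For (i) I would decompose the forcing. The first factor $\forceP_{\rm base}=\Add^L_{\omega_1}(\omega_2)\times\forceJ_{\omega_2}$ is a product of two countably closed forcings computed in $L$; the Jech poset has a $\sigma$-closed dense subset. So $\forceP_{\rm base}$ adds no reals, and $2^\omega=\omega_1$ persists from $L$. Under CH a $\Delta$-system argument on countable supports shows that $\forceP_{\rm base}$ is $\omega_2$-c.c. The tail $\forceP_{\rm stat}$ is a countable-support iteration of length $\omega_2$ whose iterands are $\CU(\omega_1\setminus E_\zeta)$ or trivial. Each iterand has size $\aleph_1$ and is $E$-complete by Lemma~\ref{lem:club-E-complete}, so Lemma~\ref{lem:E-complete-iteration} and Theorem~\ref{thm:abraham} give: no new reals, $E$-properness, and the $\omega_2$-c.c. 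Hence all cardinals are preserved. For the upper bound on $2^{\omega_1}$, count nice names: $\forceP_{\rm prep}$ has a dense subset of size $\omega_2$ and is $\omega_2$-c.c., so $2^{\omega_1}\le(\omega_2^{\omega_1})^L=\omega_2$. The $\omega_2$ distinct Cohen sets $C_\nu$ give the lower bound. Clause (vi), that $E$ stays stationary, is item (v) of Lemma~\ref{lem:stationary-coding-preserves-apparatus}; the base forcing preserves it because it is countably closed.

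Clauses (iii)--(v) I would split into two steps. First, verify each property in $L[G_{\rm base}]$ directly from the generic. Independence and the finite off-the-generic-branches property follow from Lemma~\ref{lem:CS-Jech-branch}: in $\forceJ_I*\dot{\forceB}(I,m)$ there is a $\sigma$-closed dense set, and a standard density argument shows that every maximal antichain of a finite product of distinct trees is sealed at club many levels. The Cohen factor is countably closed and is a product factor, so it does not interfere; one absorbs it into the $\sigma$-closed dense set. The $E$-good-level property (in fact the property at every limit level) holds because the generic tree is the union of conditions. By genericity, every cofinal increasing sequence of length $\omega$ that lies in the ground model is extended by a top level, and countable closure ensures that no new $\omega$-sequences are added. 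Second, transfer all of this to $W$ by Lemma~\ref{lem:stationary-coding-preserves-apparatus} (i)--(iv).

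For (ii) I would show that, in $W$, $x=\theta(i,\xi,s)$ belongs to $A_{\rm prep}$ iff $E_x$ is nonstationary, and then express $s\in S^i_\xi$ in a $\Sigma_1$ way. The statement becomes: there exists a transitive $\aleph_1$-sized model $N\models\mathrm{ZF}^-+V=L[\,\cdot\,]$ containing $\omega_1$ and a club $c\subseteq\omega_1$ such that $c\cap E^N_x=\emptyset$, where $E^N_x$ is computed in $N$ from the $\diamondsuit$ sequence and the $L$-well-order. Since $\omega_2^L=\omega_2$ and the definitions of $\langle X_\zeta\rangle$ and $\langle D_\alpha\rangle$ are absolute between $L_{\omega_2}$ and suitable $N$ containing the relevant initial segment of $L$, this is $\Sigma_1(H(\omega_2),\omega_1)$. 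Its negation is also witnessed in $W$: if $x\notin A_{\rm prep}$ then $E_x$ is stationary, and no club witness exists. So the displayed equivalence makes the membership relation $\Sigma_1$-definable uniformly in $(i,\xi,s)$.

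The main obstacle I anticipate is this converse: showing $E_x$ remains stationary when $x\notin A_{\rm prep}$. The iteration of $\CU(\omega_1\setminus E_\zeta)$ is only $E$-complete, not $E_x$-complete. My first attempt would be to handle it by noting that for $\zeta\ne x$ the target $\omega_1\setminus E_\zeta$ contains $E_x$ minus a countable set. The iteration is therefore $(E\cup E_x\setminus\beta)$-complete along models whose height lies above the relevant $\beta$'s. Using countable supports, one reruns the fusion of Lemma~\ref{lem:E-complete-iteration} for models $M$ with $M\cap\omega_1\in E_x$: each coordinate $\zeta\in M$ has its $\beta_\zeta$ below $M\cap\omega_1$. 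I would also need to keep the Jech and Cohen parts out of the way, which the product structure and countable closure should provide.
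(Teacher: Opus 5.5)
Your handling of (i), (ii), (vi), and of the transfer of (iii)--(iv) through the club shooting, follows the paper's much terser proof. Your sketch of why $E_x$ stays stationary for $x\notin A_{\rm prep}$ is a reasonable filling-in of what the paper calls the standard almost-disjoint coding argument.

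The genuine gap is clause (v), and it cannot be filled. Your genericity argument fails because the sequence $\langle s_n:n<\omega\rangle$ is not available before level $\delta$ is decided. Conditions in the Jech generic filter are linearly ordered by end-extension. So any of them of height $>\delta$ already contains $S\restriction\delta$ and has fixed level $\delta$, and there is no dense set forcing a given branch to be continued. A density argument can only put countably many prescribed branches into a new top level, and this limitation is unavoidable.

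The Jech tree is normal with every node splitting, and its levels are countable (as in any Suslin tree, since levels are antichains). Fix a limit $\delta$ and an increasing sequence $\langle\delta_n:n<\omega\rangle$ cofinal in $\delta$. Pick nodes $u_\sigma$ for $\sigma\in 2^{<\omega}$ with $u_\sigma$ on level $\delta_{|\sigma|}$ and with $u_{\sigma0},u_{\sigma1}$ incompatible extensions of $u_\sigma$. For distinct $f,g\in 2^\omega$, no node lies above both $\{u_{f\restriction n}:n<\omega\}$ and $\{u_{g\restriction n}:n<\omega\}$. Hence the countable level $\delta$ continues at most countably many of these $2^{\aleph_0}$ sequences, and for the others the union is not a node. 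Thus (v) fails at every limit $\delta$, in particular on $E$. It is incompatible with (iii) for these trees, and your strengthening to all limit levels is a fortiori false.

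The paper's proof gives no argument for (v) either. It defers to Lemma~\ref{lem:stationary-coding-preserves-apparatus}, which only addresses preservation through the club-shooting stage, and whose fusion adds limit nodes only along the branches that fusion itself builds. The property is never established in $L[G_{\rm base}]$, and by the count above it cannot be. So the lemma cannot be proved as stated. Clause (v) has to be dropped, or replaced by a statement about specific countable families of branches, as in the sealing arguments.

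This matters downstream. Clause (v) is what Lemma~\ref{lem:direct-block-E-complete} uses to make branch forcing $E$-complete. The same count shows that forcing with a Suslin tree $S$ is never $E$-complete. For countable $M\prec H(\Theta)$, the $M$-generic branches through $S\restriction(M\cap\omega_1)$ form a comeager subset of the perfect space of cofinal branches, hence a set of size $2^{\aleph_0}$. Only countably many of them are continued at level $M\cap\omega_1$.
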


\begin{proof}
The reservoir forcing and the product of Jech forcings add no reals.
The
stationary coding uses only club-shooting forcings through sets containing $E$,
so it is $E$-complete and adds no reals.
Thus CH is preserved. The reservoir
forcing adds $\omega_2$ many distinct subsets of $\omega_1$, and the preliminary
forcing has size $\omega_2$ and the usual $\omega_2$-chain-condition/size control
under GCH in $L$, so $2^{\omega_1}=\omega_2$.
The definability of $\vecS$ follows from the stationary coding: for the fixed
coding map $\theta$, membership $s\in S^i_\xi$ is equivalent to the assertion
that $E_{\theta(i,\xi,s)}$ is nonstationary, i.e. to the
$\Sigma_1(H(\omega_2),\omega_1)$ statement that there is a club disjoint from
$E_{\theta(i,\xi,s)}$.
No corresponding definition of the Cohen reservoirs is
needed. The preservation of the Suslin apparatus, including the finite
off-branch and $E$-good-level properties, is
Lemma~\ref{lem:stationary-coding-preserves-apparatus}.
The independence and
finite off-branch properties used there come from the product Jech lemma and the
Fuchs--Hamkins off-branch theorem
\cite{FuchsHamkinsDegrees,HoffelnerPi13UniformizationWellorder}.
\end{proof}

\begin{remark}
The important point is that $E$ is not used for coding.
All stationary sets
whose stationarity is killed or preserved are subsets of $\omega_1\setminus E$.
Thus whenever a countable model has height in $E$, the usual fusion lower bound
can add that height as a new top point in the relevant club-shooting conditions
and as a new top level in the relevant Jech-tree approximations.
\end{remark}

\section{Direct container coding}\label{sec:direct-container-coding}

We now define the direct coding blocks. A container is an \(\omega_1\)-sized
subset of \(\omega_2\), partitioned into \(\omega_1\) many \(\omega_1\)-blocks.
A fresh \(\omega_1\)-Cohen reservoir set determines an almost-disjoint derived
trace, that trace chooses which blocks of the container are used, and in each
selected block the forcing writes the full characteristic function of the
relevant \(\omega_1\)-code.
Direct coding by almost-disjoint traces and
Suslin-tree branch patterns is the generalized analogue of the projective coding
machinery used in
\cite{JensenSolovay,DavidVeryAbsolute,HoffelnerSigma13Separation,HoffelnerPi13Reduction,HoffelnerPi1nUniformization,HoffelnerFailureReductionPresenceSeparation}.
\subsection{Containers and derived reservoir traces}

Fix in $L$ a partition
\[
   \langle I_\eta:\eta<\omega_2\rangle
\]
of $\omega_2$ into sets of size $\omega_1$.
Each $I_\eta$ is called a
container. Fix uniform $L$-definable bijections
\[
   e_\eta:\omega_1\times\omega_1\longrightarrow I_\eta.
\]
For $\gamma<\omega_1$ put
\[
   I_{\eta,\gamma}=\{e_\eta(\gamma,\alpha):\alpha<\omega_1\}.
\]
Thus
\[
   I_\eta=\bigcup_{\gamma<\omega_1} I_{\eta,\gamma}
\]
is partitioned into $\omega_1$ many pieces, each of size $\omega_1$.
For $i<2$, $\eta<\omega_2$, $\gamma<\omega_1$ and $\alpha<\omega_1$, set
\[
   S^i_{\eta,\gamma,\alpha}=S^i_{e_\eta(\gamma,\alpha)}.
\]
The pair
\[
   (S^0_{\eta,\gamma,\alpha},S^1_{\eta,\gamma,\alpha})
\]
is the tree pair at bit coordinate $\alpha$ inside the $\gamma$-th block of the
container $I_\eta$.
The reservoir sequence
\[
   \vec C=\langle C_\nu:\nu<\omega_2\rangle
\]
consists of the $\omega_1$-Cohen subsets of $\omega_1$ added by the preliminary
forcing.
The construction does not use $C_\nu$ itself as the set of coding
blocks.
Instead it uses the derived initial-segment trace
\[
   T_\nu=\{\vartheta(C_\nu\restriction\xi):0<\xi<\omega_1\}\subseteq\omega_1,
\]
where the bijection $\vartheta:(2^{<\omega_1})^L\to\omega_1$ was fixed in the
preliminary construction.
A use of the container $I_\eta$ chooses one reservoir
index $\nu<\omega_2$.
The derived trace $T_\nu$ determines the blocks of
$I_\eta$ in which the code is written: for every $\gamma\in T_\nu$, the whole
code is written into the block $I_{\eta,\gamma}$.
We shall use the following elementary consequence of replacing a Cohen reservoir
by the set of its coded initial segments.
\begin{lemma}\label{lem:reservoir-trace-independence}
In the preliminary model the sequence
\[
   \langle T_\nu:\nu<\omega_2\rangle
\]
consists of unbounded subsets of $\omega_1$ and is pairwise almost disjoint in
the strong sense that
\[
   T_\nu\cap T_\mu
\]
is bounded in $\omega_1$ whenever $\nu\neq\mu$.
Consequently, if
$F\subseteq\omega_2\setminus\{\nu\}$ is finite, then
\[
   T_\nu\setminus\bigcup_{\mu\in F}T_\mu
\]
is unbounded in $\omega_1$.
\end{lemma}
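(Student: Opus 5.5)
The argument is a standard tree/branch computation and uses nothing about genericity beyond the fact that distinct Cohen sets are distinct. I will carry it out in three steps: unboundedness, almost disjointness, and the finite-union consequence.

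\emph{Unboundedness.} By the choice of $\vartheta$ we have $\operatorname{lh}(s)\leq\vartheta(s)$. Hence for every $0<\xi<\omega_1$ the element $\vartheta(C_\nu\restriction\xi)$ of $T_\nu$ is at least $\xi$. So $T_\nu$ is unbounded in $\omega_1$.

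This needs the initial segments $C_\nu\restriction\xi$ to lie in the domain $(2^{<\omega_1})^L$ of $\vartheta$. Here $C_\nu$ is a subset of $\omega_1$ added by $\Add_{\omega_1}(\omega_2)$, and its proper initial segments are countable. That forcing is countably closed over $L$, so it adds no new countable sequences, and every $C_\nu\restriction\xi$ belongs to $L$. The later Jech and stationary-coding stages add no reals and no countable sequences of ordinals, by Lemma~\ref{lem:E-complete-no-reals} and Lemma~\ref{lem:CS-Jech-branch}. So $T_\nu$ is the same set computed in the preliminary model $W$ as in $L[G_{\rm res}]$, and nothing is lost by passing to $W$.

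\emph{Almost disjointness.} Let $\nu\neq\mu$. Mutual genericity of the Cohen coordinates in $\Add_{\omega_1}(\omega_2)$, or simply a density argument, gives $C_\nu\neq C_\mu$. Let $\beta=\min(C_\nu\triangle C_\mu)$.

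Suppose $\gamma\in T_\nu\cap T_\mu$. Since $\vartheta$ is a bijection, hence injective, there are $\xi,\xi'$ with $C_\nu\restriction\xi=C_\mu\restriction\xi'$. Equal sequences have equal lengths, so $\xi=\xi'$. Then $C_\nu$ and $C_\mu$ agree below $\xi$, which forces $\xi\leq\beta$.

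Thus
\[
   T_\nu\cap T_\mu\subseteq\{\vartheta(C_\nu\restriction\xi):0<\xi\leq\beta\},
\]
which is countable and therefore bounded in $\omega_1$.

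This is exactly the branch argument already used for the family $E_\zeta$ in the construction of the reserved stationary set. I expect no step to be a real obstacle; the only point needing care is the absoluteness of the initial segments noted above.

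\emph{Consequence.} Let $F\subseteq\omega_2\setminus\{\nu\}$ be finite. Then
\[
   T_\nu\cap\bigcup_{\mu\in F}T_\mu=\bigcup_{\mu\in F}(T_\nu\cap T_\mu)
\]
is a finite union of bounded sets, hence bounded by some $\rho<\omega_1$. It follows that
\[
   T_\nu\setminus\bigcup_{\mu\in F}T_\mu\supseteq T_\nu\setminus\rho,
\]
and the right-hand side is unbounded because $T_\nu$ is unbounded.
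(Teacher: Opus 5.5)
Your proof is correct and follows the paper's argument essentially step for step: unboundedness from $\operatorname{lh}(s)\leq\vartheta(s)$; the inclusion $T_\nu\cap T_\mu\subseteq\{\vartheta(C_\nu\restriction\xi):0<\xi\leq\delta\}$, where $\delta$ is the first point of disagreement, via injectivity of $\vartheta$; and the finite-union step. Your extra remark that each $C_\nu\restriction\xi$ lies in $(2^{<\omega_1})^L$ by countable closure of the reservoir forcing makes explicit a point the paper leaves implicit, whereas your appeal to the later Jech and stationary-coding stages is harmless but unnecessary, since $T_\nu$ is determined once $C_\nu$ exists.
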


\begin{proof}
Each $T_\nu$ is unbounded because $\operatorname{lh}(s)\leq\vartheta(s)$ for
all $s\in(2^{<\omega_1})^L$ and the initial segments
$C_\nu\restriction\xi$ are pairwise distinct.
If $\nu\neq\mu$, then the two
Cohen sets $C_\nu$ and $C_\mu$ are distinct. Let $\delta<\omega_1$ be the first
coordinate at which they differ.
Then the only common initial segments of
$C_\nu$ and $C_\mu$ have length at most $\delta$.
Since $\vartheta$ is
injective,
\[
   T_\nu\cap T_\mu
   \subseteq
   \{\vartheta(C_\nu\restriction\xi):0<\xi\leq\delta\},
\]
which is countable, hence bounded in $\omega_1$.
The final assertion follows
because a finite union of bounded subsets of $\omega_1$ is bounded.
\end{proof}

\subsection{The tuple code}

For each separation tuple
\[
   t=(\rho,x,m,k,p,i)
\]
with $\rho<\omega_2$, $x,p\in2^{\omega_1}$, $m,k<\omega$ and $i<2$, fix a
canonical code
\[
   X_t\subseteq\omega_1.
\]
The coding map is injective and uniformly continuous in the $2^{\omega_1}$
coordinates.
For example one may interleave the characteristic functions of
$x$ and $p$ with fixed codes for $\rho,m,k,i$ using a constructible bijection
$\omega_1\cong\omega_1\times\omega$.
Only injectivity is needed in the
no-unwanted-code argument, because the derived reservoir traces are pairwise
almost disjoint.
\begin{definition}[Direct coding block]\label{def:direct-coding-block}
Let $t=(\rho,x,m,k,p,i)$ be a separation tuple, let $\eta<\omega_2$ be a
container, and let $\nu<\omega_2$ be a reservoir index.
The direct coding forcing
\[
   \forceB_{\eta,\nu,t}
\]
adds, for every $\gamma\in T_\nu$ and every $\alpha<\omega_1$, a cofinal
branch through
\[
   S^{X_t(\alpha)}_{\eta,\gamma,\alpha}.
\]
A condition is a countable partial function whose domain is a countable subset
of
\[
   \{(\gamma,\alpha):\gamma\in T_\nu,\ \alpha<\omega_1\}
\]
and which assigns to $(\gamma,\alpha)$ a node of
$S^{X_t(\alpha)}_{\eta,\gamma,\alpha}$, ordered by coordinatewise extension.
\end{definition}

More generally, if a finite list of tuples
\[
   t_0,\ldots,t_{r-1}
\]
is coded into the same container $I_\eta$, we use finitely many derived reservoir
traces
\[
   T_{\nu_0},\ldots,T_{\nu_{r-1}}
\]
and force with the countable-support product of the blocks
\[
   \forceB_{\eta,\nu_\ell,t_\ell}\qquad(\ell<r).
\]
We require the reservoir indices $\nu_\ell$ to be distinct for the distinct uses
of the same container.
The derived traces may have bounded overlap, but they
are pairwise almost disjoint by Lemma~\ref{lem:reservoir-trace-independence}.
Decoding will look for a cofinal subtrace on which no other use of the same
container is active.
\begin{lemma}\label{lem:direct-block-E-complete}
Assume CH in the ambient model and suppose the Suslin-tree apparatus satisfies
the $E$-good-level property from Lemma~\ref{lem:preliminary-model}.
Every direct
coding block $\forceB_{\eta,\nu,t}$ is $E$-complete and has size $\aleph_1$.
The same holds for any finite product of direct coding blocks using one container
and finitely many derived reservoir traces.
Hence these blocks are $E$-proper and add no
reals.
\end{lemma}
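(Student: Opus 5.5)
The proof is a direct fusion argument modelled on Lemma~\ref{lem:club-E-complete}, using the $E$-good-level property of the trees in place of the point $\delta\in E\subseteq A$. The size bound is handled first. Under CH, a condition in $\forceB_{\eta,\nu,t}$ is a countable partial function from a countable subset of $T_\nu\times\omega_1$ into nodes of trees of size $\aleph_1$. There are only $\aleph_1^{\aleph_0}=\aleph_1$ such functions. A finite product of blocks, or a countable-support product of finitely many blocks, is then still of size $\aleph_1$.

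For $E$-completeness, fix a countable $M\prec H(\Theta)$ with $\forceB=\forceB_{\eta,\nu,t}\in M$ and $\delta=M\cap\omega_1\in E$, and an $M$-generic decreasing sequence $\langle q_n:n<\omega\rangle$ in $M\cap\forceB$. The candidate lower bound is $q_\omega$. Its domain is $\bigcup_n\operatorname{dom}(q_n)$, which is countable and contained in $M$, hence in $\delta\times\delta$. On each coordinate $(\gamma,\alpha)$ in that domain, put
\[
   q_\omega(\gamma,\alpha)=\bigcup_n q_n(\gamma,\alpha),
\]
taken over those $n$ with $(\gamma,\alpha)\in\operatorname{dom}(q_n)$. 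The key step is to check that this union is a node of $S^{X_t(\alpha)}_{\eta,\gamma,\alpha}$. For this I use genericity. For each fixed $(\gamma,\alpha)\in M$ and each $\beta<\delta$, the set of conditions whose $(\gamma,\alpha)$-value has height $\geq\beta$ is dense, because each tree is normal and every node has extensions at all higher levels. This set belongs to $M$, since the tree $S^{X_t(\alpha)}_{\eta,\gamma,\alpha}$ is definable from parameters in $M$. So the heights of the $q_n(\gamma,\alpha)$ are cofinal in $\delta$. By the $E$-good-level property (Lemma~\ref{lem:preliminary-model}(v)), preserved into the current ambient model, the union is a node on level $\delta$. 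Hence $q_\omega$ is a condition extending every $q_n$.

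The main obstacle is justifying that the $E$-good-level property still holds in the ambient model in which the block is used. At that point earlier iterands may already have added branches or changed the universe. The statement simply assumes this property as a hypothesis, so I will invoke it as given. I will also note explicitly that it concerns the trees themselves, namely that every $\omega$-sequence cofinal in $\delta\in E$ has a limit node in the tree, so that adding branches elsewhere does not affect it. A second subtle point is that the block's domain involves $T_\nu$. Since $T_\nu\in M$, the set $\operatorname{dom}(q_\omega)$ is a subset of $T_\nu\times\omega_1$ as required.

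For a finite product $\prod_{\ell<r}\forceB_{\eta,\nu_\ell,t_\ell}$, the same argument works coordinatewise. An $M$-generic sequence in the product projects to $M$-generic sequences in each factor, because the factors lie in $M$ and dense sets in a factor pull back to dense sets in the product. Alternatively, one can run the density argument above directly for each factor coordinate. Different uses of the container $I_\eta$ employ distinct reservoir indices and write into different trees, so no compatibility constraints between factors arise. Finally, $E$-properness and the fact that no reals are added follow from Lemma~\ref{lem:E-complete-no-reals}.
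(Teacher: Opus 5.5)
Your proposal is correct and follows the paper's proof. The size bound comes from CH, and $E$-completeness is a coordinatewise fusion in which the limit node on level $\delta=M\cap\omega_1\in E$ is supplied by the $E$-good-level hypothesis, with finite products handled factor by factor. Your use of $M$-genericity to show that the heights at each coordinate are cofinal in $\delta$ is a small improvement, because it makes the paper's separate ``bounded heights'' case unnecessary (in that case an eventual union of strictly increasing nodes need not be a node). Be aware, though, that both arguments rest entirely on the good-level hypothesis exactly as stated, and that hypothesis fails for any splitting tree with countable levels: $S\restriction\delta$ has $2^{\aleph_0}$ cofinal branches while $S_\delta$ is countable, so the lemma holds only vacuously.
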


\begin{proof}
The size is $\aleph_1$ under CH: a condition has countable support in a set of
size $\aleph_1$, and at each coordinate the relevant tree has size $\aleph_1$.
We prove $E$-completeness. Let $M\prec H(\Theta)$ be countable with
$\delta=M\cap\omega_1\in E$, and let
\[
   q_0\geq q_1\geq q_2\geq\cdots
\]
be an $M$-generic descending sequence in the direct coding block.
The union of
the supports is countable. Fix a coordinate $(\gamma,\alpha)$ in this union.
The sequence of nodes assigned at this coordinate is increasing in the
corresponding Jech-generic Suslin tree.
If the heights are cofinal in $\delta$,
then by the good-level property at heights in $E$, the union of this increasing
sequence is a node on level $\delta$ of that tree.
If the heights are bounded
below $\delta$, first take the eventual union and then extend it to level
$\delta$ using normality of the tree.
Put this limit node into the coordinate.
Doing this simultaneously for all coordinates in the countable union of supports
gives a condition below every $q_n$.
The argument for finitely many traces is identical, since the union of finitely
many countable supports is countable.
\end{proof}

\begin{remark}
Lemma~\ref{lem:direct-block-E-complete} is the quotient form of the dense
$\sigma$-closed presentation in Lemma~\ref{lem:CS-Jech-branch}.
In the combined
Jech-plus-branch forcing, the lower bound is obtained by end-extending the tree
approximations and adding limit top-level nodes.
After the preliminary Jech and
stationary-coding forcing has been performed, the same construction is reflected
in the $E$-good-level property of the fixed tree sequence.
\end{remark}

\subsection{Preservation of unused tree coordinates}

The proof of the no-unwanted-code lemma uses a preservation fact about tree
coordinates which are not deliberately selected by a direct coding block.
We
isolate this fact here so that the later argument is only a decoding argument.
\begin{definition}
Let $\forceB_{\eta,\nu,t}$ be a direct coding block. Its tree-coordinate support
is
\[
   \supp_{\rm tr}(\forceB_{\eta,\nu,t})=
   \{(X_t(\alpha),e_\eta(\gamma,\alpha)):
        \gamma\in T_\nu,
        \alpha<\omega_1\}.
\]
Equivalently, $\forceB_{\eta,\nu,t}$ uses exactly the trees
\[
   S^{X_t(\alpha)}_{\eta,\gamma,\alpha}
   \qquad(\gamma\in T_\nu,\ \alpha<\omega_1).
\]
A countable-support partial run uses a tree coordinate $(i,\tau)$ if some
iterand in the run has $(i,\tau)$ in its tree-coordinate support.
\end{definition}

\begin{lemma}[Jech sealing and off-branch preservation]
\label{lem:jech-apparatus-product-preservation}
Let $K$ be a set of tree coordinates of size at most $\omega_1$, and let
$\tau=(i,\nu)$ be a tree coordinate not in $K$.
Force first with the product of
Jech forcings adding the trees indexed by $K\cup\{\tau\}$ and then add, with
countable support, the branches through the coordinates in $K$ specified by a
finite multiplicity function on each coordinate.
In the resulting extension,
the tree $S^i_\nu$ remains Suslin.

More generally, if finitely many branches through $S^i_\nu$ are deliberately
added first, then every subtree of $S^i_\nu$ off the union of those finitely many
branches remains Suslin after the remaining branch forcings on $K$.
\end{lemma}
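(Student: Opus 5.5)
The plan is a product-commutation argument. The forcing in question is
\[
   \forceJ_{K\cup\{\tau\}}*\dot{\forceB}(K,m)
   \;\cong\;
   \mathbb J_\tau\times\bigl(\forceJ_K*\dot{\forceB}(K,m)\bigr),
\]
where $\dot{\forceB}(K,m)$ is the countable-support product of the finite powers $\dot S_\kappa^{\,m(\kappa)}$, $\kappa\in K$. The branch forcings on $K$ only mention the trees $S_\kappa$ with $\kappa\in K$, which are added by the $K$-part of the Jech product. So the whole forcing factors as $\mathbb J_\tau$ times a forcing that does not depend on the $\tau$-coordinate. Since $|K|\leq\omega_1$ and CH holds, Lemma~\ref{lem:CS-Jech-branch} shows that $\forceR:=\forceJ_K*\dot{\forceB}(K,m)$ has a $\sigma$-closed dense subset. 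Hence we may view the extension as first forcing with the $\sigma$-closed forcing $\forceR$ and then with $\mathbb J_\tau$, which is computed the same way in both models because $\forceR$ adds no countable sequences. In that order, $S^i_\nu$ is simply the Jech-generic tree added over the model $V^{\forceR}$.

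The first conclusion then reduces to the standard fact that Jech's generic tree is Suslin over whatever ground model it is added to. That ground model here is $V^{\forceR}$, which still satisfies the relevant fragment of CH because no reals are added. I would check the Suslin property by the usual density argument. Let $\dot A$ be a name for a maximal antichain and let $M$ be a countable elementary submodel. Build a fusion sequence of tree approximations whose union has height $\delta=M\cap\omega_1$. Then add level $\delta$ using only branches that pass through a node of $\dot A$ already decided in $M$. This seals $\dot A$ below level $\delta$. Because the $\sigma$-closed dense subset of $\forceR$ makes $\forceR\times\mathbb J_\tau$ equivalent to the iteration in either order, this step needs no interaction with the $K$-coordinates.

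For the more general clause, the order is changed. Let $b_0,\dots,b_{n-1}$ be the finitely many branches through $S^i_\nu$ added first. By Lemma~\ref{lem:Jech-finite-branches}, $\mathbb J_\tau*\dot S_\tau^{\,n}$ has a $\sigma$-closed dense subset. So $\mathbb J_\tau*\dot S^n_\tau\times\forceR$ again has a $\sigma$-closed dense set, and the extension is equivalent to one obtained by forcing first with $\forceR$ and then with $\mathbb J_\tau*\dot S_\tau^{\,n}$. It therefore suffices to show, over an arbitrary ground model, that after $\mathbb J*\dot S^n$ every subtree of $S$ whose nodes avoid $b_0\cup\dots\cup b_{n-1}$ is Suslin. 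This is the finite off-the-generic-branches property of Fuchs--Hamkins (quoted in Lemma~\ref{lem:preliminary-model}). It can be proved by the same sealing argument on the dense set $D_n$ from Lemma~\ref{lem:Jech-finite-branches}: at the limit level $\delta$, the new top nodes off the $n$ designated limit nodes are chosen to pass through elements of the antichain decided in $M$.

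The main obstacle is the commutation step in full generality. The branch forcing on $K$ is defined in the Jech extension, so one must verify that $\forceR$ really is a $\sigma$-closed-dense forcing in the ground model, independent of $\mathbb J_\tau$. One must also check that $\forceR$ does not alter $\mathbb J_\tau$, so that $\mathbb J_\tau$ remains Jech's forcing and the Suslinity proof over $V^{\forceR}$ applies verbatim. I would handle this by working directly with the dense set of Lemma~\ref{lem:CS-Jech-branch}. Its conditions are ground-model objects consisting of countable tree approximations together with top-level node tuples, and their ordering does not involve coordinate $\tau$. This exhibits the full forcing as literally a product with a factor $\mathbb J_\tau$, or $\mathbb J_\tau*\dot S_\tau^n$ in the general case.
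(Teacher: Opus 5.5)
Your proof is correct, but it is organized differently from the paper's. The paper never splits off coordinate $\tau$. It works in the dense $\sigma$-closed presentation of the whole forcing $\forceJ_{K\cup\{\tau\}}*\dot{\forceB}(K,m)$ and builds one $M$-generic fusion sequence. That sequence carries the branch markers on $K$ along while meeting sealing requirements for $\dot A$ at $\tau$. At height $\delta=M\cap\omega_1$ it adds the $K$-limit nodes together with a sealing level $L_\tau$, so that $p_\omega\Vdash\dot A\subseteq S^i_\nu\restr(\delta+1)$. For the second clause it cites Fuchs--Hamkins and repeats this fusion with the sealing level inside $\dot U$.

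You instead exhibit the forcing, on the dense set of ground-model conditions, as a product: $\mathbb J_\tau\times\forceR$, respectively $(\mathbb J_\tau*\dot S_\tau^{\,n})\times\forceR$, where $\forceR=\forceJ_K*\dot{\forceB}(K,m)$ has a $\sigma$-closed dense subset. You force with $\forceR$ first and use that it adds no countable sequences. Hence $\mathbb J_\tau$ and the dense set $D_n$ are the same posets in $V^{\forceR}$. Both clauses then reduce to one-tree facts over the ground model $V^{\forceR}$: the Jech tree is Suslin, and Fuchs--Hamkins off-branch Suslinity holds.

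Your route is more modular. It shows exactly why the $K$-branches are harmless: they live in a factor that never mentions $\tau$. It also handles the ``branches through $S^i_\nu$ added first'' ordering automatically, since product factors commute. The paper's route is a single hands-on fusion of the same shape as its later $E$-good-level fusions. The price is that this independence of coordinates is verified only implicitly inside the fusion.

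One point of reading is shared with the paper's wording. The off-branch clause must be taken for cones $\{u\in S:s\le u\}$ above nodes $s$ lying on none of the $b_j$. The full complement $S\setminus\bigcup_{j<n}b_j$ is not Suslin: it contains an uncountable antichain of immediate successors of nodes of $b_0$. Accordingly, in your sealing sketch on $D_n$, the nodes that must be routed through elements of the antichain decided in $M$ are the new level-$\delta$ nodes inside that cone. Requiring this of all new nodes off the $n$ designated limit nodes is not what is needed.
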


\begin{proof}
Write
\[
   m:K\longrightarrow\omega
\]
for the finite multiplicity function, and let
\[
   \mathbb R(K,m,\tau)=
   \Bigl(\prod_{\sigma\in K\cup\{\tau\}}^{\rm cs}\mathbb J_\sigma\Bigr)
   *\dot{\mathbb B}(K,m),
\]
where
\[
   \dot{\mathbb B}(K,m)=
   \prod_{\sigma\in K}^{\rm cs}\prod_{j<m(\sigma)}\dot S_\sigma .
\]
Thus no branch coordinate over $\tau$ occurs in $\dot{\mathbb B}(K,m)$.
It is
enough to prove
\[
   \mathbb R(K,m,\tau)\forces
   \text{``}S^i_\nu\text{ is Suslin}\text{.''}
\]
Fix $p\in\mathbb R(K,m,\tau)$ and a name $\dot A$ such that
\[
   p\forces \text{``}\dot A\subseteq S^i_\nu
        \text{ is a maximal antichain}\text{.''}
\]
Choose a countable
\[
   M\prec H(\Theta)\quad\text{with}\quad
   p,\dot A,K,m,\tau,\mathbb R(K,m,\tau)\in M,
\]
where $\Theta$ is large, and put $\delta=M\cap\omega_1$.
Work in the dense
$\sigma$-closed presentation from Lemma~\ref{lem:CS-Jech-branch}. In this
presentation the coordinates in $K$ carry their finitely many branch markers,
whereas the coordinate $\tau$ carries only the Jech-tree approximation.
Construct an $M$-generic descending sequence
\[
   p=p_0\geq p_1\geq p_2\geq\cdots
\]
which meets all dense subsets of the presentation belonging to $M$ and, for the
$\tau$-coordinate, meets the usual sealing requirements for $\dot A$.
The latter
requirements are the dense sets ensuring that, whenever a possible future
$\delta$-level node $u$ at the $\tau$-coordinate is produced by the fusion, some
condition in the sequence decides some ground-model node
$a\in S^i_\nu\cap M$ below $u$ to be a member of $\dot A$.
Let $p_\omega$ be the fusion of this sequence. Concretely, for
$\sigma\in K$ we take the unions of the tree approximations and add the limit
nodes determined by the finitely many branch markers.
At the unused coordinate
$\tau$ we take the union of the Jech approximations and add the sealing level
$L_\tau\subseteq (S^i_\nu)_\delta$.
The construction gives
\[
   p_\omega\forces
   (\forall u\in L_\tau)(\exists a\in S^i_\nu\cap M)
      \, (a\in\dot A\ \wedge\ a<_{S^i_\nu}u) .
\tag{$*$}
\]
Every node of $S^i_\nu$ of height at least $\delta$ extends a unique member of
$L_\tau$.
Hence $p_\omega$ forces that no element of $\dot A$ has height
$>\delta$: if $b\in\dot A$ and $\operatorname{ht}(b)>\delta$, then
$b\restriction\delta\in L_\tau$, and by $(*)$ there is $a\in S^i_\nu\cap M$ such that
$a\in\dot A$ and
\[
   a<_{S^i_\nu} b\restriction\delta <_{S^i_\nu} b,
\]
contradicting that $\dot A$ is an antichain.
Thus
\[
   p_\omega\forces
   \dot A\subseteq S^i_\nu\restriction(\delta+1).
\]
The set $S^i_\nu\restriction(\delta+1)$ is countable.
Since $p$ and $\dot A$
were arbitrary,
\[
   \mathbb R(K,m,\tau)\forces
   \text{``}S^i_\nu\text{ has no uncountable antichain}\text{.''}
\]
Thus $S^i_\nu$ remains Suslin.
For the off-branch statement, suppose first that $n<\omega$ branches
\[
   \dot b_0,\ldots,\dot b_{n-1}
\]
through $S^i_\nu$ are deliberately added.
By the Fuchs--Hamkins off-branch
preservation theorem
\cite{FuchsHamkinsDegrees,HoffelnerPi13UniformizationWellorder},
\[
   \mathbb J_\tau*\dot S_\tau^{\,n}
   \forces
   \text{``every subtree of }S^i_\nu\setminus
   \bigcup_{j<n}\dot b_j\text{ is Suslin.''}
\]
Now work in the extension by these $n$ branches and repeat the argument above
with a name $\dot U$ for an off-branch subtree and a name $\dot A$ for a maximal
antichain of $\dot U$.
The fusion is carried out exactly as before, except that
the sealing level is chosen inside $\dot U$.
It yields
\[
   p_\omega\forces
   \dot A\subseteq \dot U\restriction(\delta+1),
\]
so the later branch forcings on the coordinates in $K$ preserve the Suslinity of
$\dot U$.
This proves the stated off-branch preservation.
\end{proof}

\begin{lemma}
\label{lem:atomic-unused-coordinate-preservation}
Let $\forceB_{\eta,\nu,t}$ be a direct coding block and let $(i,\tau)$ be a tree
coordinate which is not in $\supp_{\rm tr}(\forceB_{\eta,\nu,t})$.
Then
$\forceB_{\eta,\nu,t}$ preserves the Suslinity of $S^i_\tau$.

The same holds for any finite product of direct coding blocks, provided
$(i,\tau)$ is outside the union of their tree-coordinate supports.
If finitely
many branches through $S^i_\tau$ have already been intentionally added, then the
finite product preserves the Suslinity of every subtree of $S^i_\tau$ off those
branches.
\end{lemma}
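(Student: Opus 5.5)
The plan is to reduce this to Lemma~\ref{lem:jech-apparatus-product-preservation}. First I identify $\forceB_{\eta,\nu,t}$, computed in $W$, as a quotient of a forcing of the form $\mathbb R(K,m,\tau)$. Let $K=\supp_{\rm tr}(\forceB_{\eta,\nu,t})$, which has size $\omega_1$, since it is indexed by $T_\nu\times\omega_1$. For each $(\gamma,\alpha)$ with $\gamma\in T_\nu$, the block adds exactly one cofinal branch through $S^{X_t(\alpha)}_{\eta,\gamma,\alpha}$. The map $(\gamma,\alpha)\mapsto(X_t(\alpha),e_\eta(\gamma,\alpha))$ is injective because $e_\eta$ is injective. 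So the block is the countable-support product $\prod_{\sigma\in K}^{\rm cs} S_\sigma$, i.e.\ $\dot{\forceB}(K,m)$ with $m\equiv 1$. For a finite product of blocks on one container, overlapping traces may make the same tree coordinate appear up to $r$ times. The product is then still $\dot{\forceB}(K,m)$, where $K$ is the union of the supports and $m(\sigma)\le r$ counts the uses. By hypothesis $(i,\tau)\notin K$.

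Second, I have to deal with the fact that in $W$ the trees are not freshly added. The forcings $\forceP_{\rm res}$ and $\mathbb J_{\omega_2}$ were already performed, and then the stationary coding $\forceP_{\rm stat}$, so the block is forced over an extension by more than $\forceJ_{K\cup\{\tau\}}$. I would regroup the preliminary forcing as $\forceJ_{K\cup\{\tau\}}\times(\forceP_{\rm res}\times\forceJ_{\omega_2\setminus(K\cup\{\tau\})})$, followed by $\dot\forceP_{\rm stat}$ and then the block. There are two ways to proceed.
\begin{enumerate}[label=(\alph*)]
\item Rerun the fusion argument in the proof of Lemma~\ref{lem:jech-apparatus-product-preservation} over the whole iteration. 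Let $M$ be countable with $M\cap\omega_1\in E$. The $\sigma$-closed side factors are handled by unions, the club-shooting coordinates get the top point $\delta\in E$ as in Lemma~\ref{lem:stationary-coding-preserves-apparatus}, the branch coordinates in $K$ get their limit nodes, and at $\tau$ we add a sealing level $L_\tau$ for the antichain name $\dot A$.
\item Treat the stationary-coding tail and the block together as a countable-support iteration of $E$-proper forcings, each preserving $S^i_\tau$. Theorem~\ref{thm:Eproper-Miyamoto} then applies, with the atomic step handled by (a).
\end{enumerate}
I would follow (a). A single fusion is cleaner, and it gives the conclusion $p_\omega\forces\dot A\subseteq S^i_\tau\restriction(\delta+1)$ exactly as in the cited lemma. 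Since every node of height at least $\delta$ extends a node of $L_\tau$, and $\dot A$ is an antichain, this bound follows.

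Third, the off-branch clause. If finitely many branches $b_0,\dots,b_{n-1}$ through $S^i_\tau$ have been added, I absorb them into the presentation as $\mathbb J_\tau*\dot S_\tau^{\,n}$, using Lemma~\ref{lem:Jech-finite-branches} for the $\sigma$-closed dense subset. Fuchs--Hamkins, in the form quoted in Lemma~\ref{lem:jech-apparatus-product-preservation}, says every subtree $U$ off these branches is Suslin. The same sealing fusion, with the sealing level chosen inside $\dot U$ and the markers of $b_j$ carried to their limit nodes, shows that the block product preserves this.

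The main obstacle is the second step. The block is defined in $W$ from $T_\nu$, which depends on the Cohen generic, and it sits after the stationary coding. So I must check that the sealing dense sets at $\tau$ can be met by conditions of the full presentation lying in $M$, and that the stationary-coding coordinates whose targets involve nodes of $S^i_\tau$ do not obstruct the sealing. My first attempt would be to note that those coding steps only shoot clubs avoiding sets $E_\zeta\subseteq E^*$, so their fusion at $\delta\in E$ is unconstrained. The sealing then only interacts with the Jech coordinate $\tau$, where it is the standard Jech argument.
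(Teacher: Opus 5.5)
Your argument is correct, and its core is the same as the paper's. Both identify the block, or a finite product of blocks, with $\dot\forceB(K,m)$ for the appropriate multiplicity function. Both then run the Jech sealing fusion of Lemma~\ref{lem:jech-apparatus-product-preservation} at the marker-free coordinate $(i,\tau)$.

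The difference is how the conclusion reaches $W$. The paper does the fusion only in the pre-quotient forcing $\bigl(\prod^{\rm cs}_{\sigma\in K_0\cup\{(i,\tau)\}}\mathbb J_\sigma\bigr)*\dot\forceB(K_0,1)$. It then transfers the result to $W$ with a one-line appeal to ``the quotient forcing theorem''. You instead run a single fusion through $\forceP_{\rm base}*\dot\forceP_{\rm stat}*\dot\forceB$ over $L$. The paper's version is shorter, but yours confronts what that quotient step hides:
\begin{itemize}
\item $K$ depends on $C_\nu$ and on $x,p$, so it is not in $L$. Neither the pre-quotient forcing nor your regrouping $\forceJ_{K\cup\{\tau\}}\times(\cdots)$ is literally a forcing over $L$. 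Drop the regrouping and keep $\dot K$ and $\dot\forceB$ as names in $M$, which your option (a) allows.
\item The stationary coding between the trees and the block is an iteration whose iterands depend on $S^i_\tau$ itself.
\item Because the trees are still being built during your fusion, you place the limit nodes at the $K$-coordinates directly. You therefore do not rely on the $E$-good-level property of the fixed trees in $W$, which, read literally, cannot hold for a splitting tree with countable levels.
\end{itemize}

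To close the obstacle you flagged, make three points explicit.
\begin{itemize}
\item Along the $M$-generic sequence, the preparatory part decides the tree approximations and the $\dot\forceB$ branch markers. This tells the base lower bound which limit nodes to add.
\item Membership in $A_{\rm prep}$ is decided for every coding coordinate in $M$, while the coordinates $\theta(i,\tau,s)$ for the new level-$\delta$ nodes $s$ lie outside $M$. So the club-shooting part of the lower bound needs only the top point $\delta\in E\subseteq\omega_1\setminus E_\zeta$.
\item The sealing tasks are dovetailed into the sequence. For each node of the union tree at $\tau$, you track a branch through it that passes above a node forced into $\dot A$.
\end{itemize}
The off-branch clause then goes through the same way in both versions.
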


\begin{proof}
Let
\[
   K_0=\supp_{\rm tr}(\forceB_{\eta,\nu,t}).
\]
Since $(i,\tau)\notin K_0$, the pre-quotient forcing for this block has the
form
\[
   \Bigl(\prod_{\sigma\in K_0\cup\{(i,\tau)\}}^{\rm cs}\mathbb J_\sigma\Bigr)
   *\dot{\mathbb B}(K_0,1),
\]
where
\[
   \dot{\mathbb B}(K_0,1)=
   \prod_{\sigma\in K_0}^{\rm cs}\dot S_\sigma .
\]
The branch part is precisely the direct block $\forceB_{\eta,\nu,t}$ after
passing to the quotient over the preliminary Jech and stationary-coding generics.
By Lemma~\ref{lem:jech-apparatus-product-preservation}, the pre-quotient
forcing satisfies
\[
   \forces \text{``}S^i_\tau\text{ is Suslin}\text{.''}
\]
The quotient forcing theorem therefore gives, in the preliminary model $W$,
\[
   \forceB_{\eta,\nu,t}\forces
   \text{``}S^i_\tau\text{ is Suslin}\text{.''}
\]
Equivalently, if some $b\in\forceB_{\eta,\nu,t}$ forced a name $\dot A$ to be an
uncountable antichain of $S^i_\tau$, then the corresponding pre-quotient
condition would contradict the displayed forcing assertion.
Now let
\[
   \forceB_*=\prod_{\ell<r}^{\rm cs}\forceB_{\eta,\nu_\ell,t_\ell}
\]
be a finite product of direct coding blocks and put
\[
   K_*=\bigcup_{\ell<r}
        \supp_{\rm tr}(\forceB_{\eta,\nu_\ell,t_\ell}).
\]
For $\sigma\in K_*$ define the finite multiplicity
\[
   m_*(\sigma)=
   \bigl|\{\ell<r:\sigma\in
      \supp_{\rm tr}(\forceB_{\eta,\nu_\ell,t_\ell})\}\bigr|
.
\]
Then $\forceB_*$ is the quotient interpretation of
$\dot{\mathbb B}(K_*,m_*)$. If $(i,\tau)\notin K_*$, another application of
Lemma~\ref{lem:jech-apparatus-product-preservation} yields
\[
   \forceB_*\forces
   \text{``}S^i_\tau\text{ is Suslin}\text{.''}
\]
The off-branch version is obtained by applying the off-branch part of
Lemma~\ref{lem:jech-apparatus-product-preservation} to the same pair
$(K_*,m_*)$: after the finitely many intentional branches through $S^i_\tau$ are
factored off, the quotient product on $K_*$ preserves every subtree disjoint from
their union.
\end{proof}

\begin{lemma}
\label{lem:iterated-unused-coordinate-preservation}
Let
\[
   \langle\forceP_\alpha,\dot\forceQ_\alpha:\alpha<\gamma\rangle
\]
be a clean countable-support partial run of direct coding blocks, with
$\gamma\leq\omega_2$.
Suppose that a tree coordinate $(i,\nu)$ is not used by
any iterand of the run.
Then
\[
   \forceP_\gamma\forces
   \text{``}S^i_\nu\text{ is Suslin}\text{.''}
\]

More generally, suppose that the run deliberately adds only finitely many
branches through $S^i_\nu$.
Then, after those branches are added, the remaining
iteration preserves the Suslinity of every subtree of $S^i_\nu$ off their union.
\end{lemma}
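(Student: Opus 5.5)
The plan is to prove this by induction on $\gamma$, combining the single-step results (Lemma~\ref{lem:atomic-unused-coordinate-preservation}, and behind it Lemma~\ref{lem:jech-apparatus-product-preservation}) with the $E$-proper Miyamoto theorem (Theorem~\ref{thm:Eproper-Miyamoto}). The successor and limit steps are handled by the iteration theorem once each iterand is known to have the right properties. So the first task is to check that, in $V^{\forceP_\alpha}$, each iterand $\dot\forceQ_\alpha$ is $E$-proper and preserves $S^i_\nu$ as a Suslin tree.

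$E$-properness is immediate. By Lemma~\ref{lem:direct-block-E-complete}, each iterand is $E$-complete. That lemma needs CH and the $E$-good-level property in the intermediate model. Both survive: $\forceP_\alpha$ is $E$-complete by Lemma~\ref{lem:E-complete-iteration}, so it adds no countable sequences, and therefore no new increasing $\omega$-sequences of nodes. Hence the good-level property of Lemma~\ref{lem:preliminary-model}(v) persists, and CH persists too.

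The Suslinity of $S^i_\nu$ under $\dot\forceQ_\alpha$ is the main obstacle. Lemma~\ref{lem:atomic-unused-coordinate-preservation} is proved over $W$ via the quotient of the pre-quotient forcing $\bigl(\prod^{\rm cs}\mathbb J_\sigma\bigr)*\dot{\mathbb B}$. In $V^{\forceP_\alpha}$, however, earlier branches have already been added through other trees. To get around this, I would not apply the one-step lemma in the intermediate model. Instead I would absorb the entire initial run into the pre-quotient presentation, as follows.

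Let $K_\alpha$ be the union of the tree-coordinate supports of the iterands below $\alpha+1$, with multiplicities $m_\alpha$. Cleanness of the run means the reservoir indices are distinct per container use, and these multiplicities are finite on each coordinate. Modulo the club-shooting coordinates, which are handled by the same fusion since all targets contain $E$, the forcing $\forceP_{\alpha+1}$ is a dense sub-presentation of the quotient of $\mathbb R(K_\alpha,m_\alpha,(i,\nu))$ over the Jech generics. Lemma~\ref{lem:jech-apparatus-product-preservation} then gives Suslinity of $S^i_\nu$ directly after $\forceP_{\alpha+1}$.

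This already suggests a cleaner overall route that bypasses Miyamoto entirely. Fix $\gamma$, set $K=\bigcup_{\alpha<\gamma}\supp_{\rm tr}(\dot\forceQ_\alpha)$, and observe that $(i,\nu)\notin K$ by hypothesis. The step needing care is that $K$ is determined by names, so the supports are not known in advance. I would therefore run the sealing fusion of Lemma~\ref{lem:jech-apparatus-product-preservation} directly on $\forceP_\gamma$, with models $M$ satisfying $M\cap\omega_1\in E$. At each iterand coordinate in $M\cap\gamma$, the lower bound is taken via $E$-completeness; at the Jech coordinate of $(i,\nu)$, the sealing level $L_\tau$ is added. Since no iterand ever places a branch marker on $(i,\nu)$, the sealing level can be chosen freely, and $(*)$ follows as before. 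This gives that every maximal antichain is forced to lie in $S^i_\nu\restriction(\delta+1)$.

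For the off-branch version, I would factor off the finitely many deliberate branches through $S^i_\nu$ as in the second half of Lemma~\ref{lem:jech-apparatus-product-preservation}. After that, by the Fuchs--Hamkins theorem, subtrees $\dot U$ off their union are Suslin. I would then repeat the same fusion over the remaining tail, choosing the sealing level inside $\dot U$. Finally, if one prefers the iteration-theorem route, Theorem~\ref{thm:Eproper-Miyamoto} applied to the tail with the tree $\dot U$ gives the same conclusion, since each tail iterand is $E$-proper and, by the atomic lemma, preserves $\dot U$.
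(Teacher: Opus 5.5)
The paper's proof is the route you set up in your first paragraph and then discard. It applies Lemma~\ref{lem:atomic-unused-coordinate-preservation} iterand by iterand to preserve $S^i_\nu$, and, after factoring off the intentional branches, each off-branch subtree. It gets $E$-properness from Lemmas~\ref{lem:direct-block-E-complete} and~\ref{lem:E-complete-no-reals}, and Theorem~\ref{thm:Eproper-Miyamoto} lifts the preservation through the countable-support iteration. Your objection that Lemma~\ref{lem:atomic-unused-coordinate-preservation} is only established over $W$ is a fair reading of its proof, but what you put in its place does not work.

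Your absorption step treats $\forceP_{\alpha+1}$ as a quotient of $\mathbb R(K_\alpha,m_\alpha,(i,\nu))$. Lemma~\ref{lem:jech-apparatus-product-preservation}, however, is about a countable-support \emph{product} of branch forcings over a fixed ground-model index set with fixed multiplicities. The run is an iteration whose tree-coordinate supports depend on the generic, through the codes $X_t$ of points added at earlier stages and through the container and trace choices. So no such $(K_\alpha,m_\alpha)$ exists in the ground model. You notice this a paragraph later, but you still draw the conclusion for $\forceP_{\alpha+1}$ from it.

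The direct fusion is where the real gap lies. Over $W$ the tree $S^i_\nu$, level $\delta$ included, is already fixed, so $\forceP_\gamma$ has no Jech coordinate at $(i,\nu)$ and there is nothing to seal. Sealing only makes sense in $\forceP_{\rm prep}*\dot\forceP_\gamma$, where the $\tau$-tree is still being built. There, however, you cannot take the lower bounds at the run's coordinates ``via $E$-completeness'' of the iterands over already built trees. If such lower bounds were available, nothing in the fusion would depend on whether a branch marker sits on $\tau$: the sealing level would be ``free'' in either case. The identical argument would then show that $S^i_\nu$ stays Suslin even when the run adds a cofinal branch through it, which is absurd.

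The hypothesis that $(i,\nu)$ is unused can only enter if the limits of the branch markers are placed onto the new top level $\delta$ of the used trees inside the same fusion. Then the unmarked coordinate $\tau$ is the one whose top level is genuinely free to be chosen through the decided part of $\dot A$. Making this precise requires a dense set of flat conditions in the whole combined forcing: the Jech product, the stationary coding (which itself codes $S^i_\nu$), and the run with its name-dependent supports. In this dense set, branch markers must be decided along the sequence, and $M$-generic sequences with $M\cap\omega_1\in E$ must have such lower bounds. That amounts to extending Lemmas~\ref{lem:CS-Jech-branch} and~\ref{lem:jech-apparatus-product-preservation} from products to iterations, and ``$(*)$ follows as before'' does not supply it.

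Finally, your fallback for the off-branch clause has each tail iterand preserve $\dot U$ ``by the atomic lemma'' and then invokes Theorem~\ref{thm:Eproper-Miyamoto}. That is exactly the application of Lemma~\ref{lem:atomic-unused-coordinate-preservation} in an intermediate model that you rejected at the outset. As written, the proposal is therefore either the paper's argument with your objection quietly dropped, or incomplete.
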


\begin{proof}
By Lemma~\ref{lem:atomic-unused-coordinate-preservation}, each atomic direct
coding block which avoids $(i,\nu)$ preserves $S^i_\nu$.
These iterands are
$E$-proper and have size $\aleph_1$ by
Lemmas~\ref{lem:direct-block-E-complete} and \ref{lem:E-complete-no-reals}.
Therefore Theorem~\ref{thm:Eproper-Miyamoto} implies that the whole partial run
preserves the Suslinity of $S^i_\nu$.
For the off-branch version, first factor off the finitely many intentional
branches through $S^i_\nu$.
Lemma~\ref{lem:atomic-unused-coordinate-preservation}
says that every later atomic block preserves the relevant off-branch subtree,
and Theorem~\ref{thm:Eproper-Miyamoto} again lifts this preservation through the
countable-support iteration.
\end{proof}

\subsection{Separation codes}

\begin{definition}[Direct separation code]\label{def:direct-sepcode}
Fix a container $\eta<\omega_2$. We write
\[
   \SepCoded^\eta_\rho(x,m,k,p,i)
\]
if there are a strictly increasing function
$d:\omega_1\to\omega_1$ and an $\omega_1$-length code for a transitive model
$N$ such that $N$ verifies the following.
\begin{enumerate}[label=\textup{(\roman*)}]
\item The sequence $\vecS$ and the fixed container/block indexing are decoded
      according to the fixed $\Sigma_1(H(\omega_2),\omega_1)$ definitions.
\item The tuple $t=(\rho,x,m,k,p,i)$ has canonical bit code
      $X_t\subseteq\omega_1$.
\item The function $d$ is strictly increasing. Hence its range is cofinal in
      $\omega_1$.
\item For every $\beta<\omega_1$ and every $\alpha<\omega_1$, $N$ contains a
      cofinal branch through
      \[
         S^{X_t(\alpha)}_{\eta,d(\beta),\alpha}.
\]
\end{enumerate}
We write $\SepCoded_\rho(x,m,k,p,i)$ if
$\SepCoded^\eta_\rho(x,m,k,p,i)$ holds for some $\eta<\omega_2$.

If $B\subseteq\omega_2$ is a set of containers to be ignored, we write
\[
   \SepCoded^{B}_\rho(x,m,k,p,i)
\]
if $\SepCoded^\eta_\rho(x,m,k,p,i)$ holds for some container
$\eta\notin B$.
In the final separator, $B$ will be the activation-stage error
term $B_\rho$, namely the set of all containers already used before the pair
activated.
Thus the decoder explicitly ignores all pre-activation coding noise.
\end{definition}

\begin{lemma}\label{lem:direct-sepcode-sigma11}
For each $\rho<\omega_2$ and each parameter
$B\in H(\omega_2)$ with $B\subseteq\omega_2$, the relation
\[
   \SepCoded^B_\rho(x,m,k,p,i)
\]
is $\Sigma^1_1$, uniformly in $m,k,p,i$ and in a code for $B$.
In particular,
$\SepCoded_\rho(x,m,k,p,i)$ is $\Sigma^1_1$.
\end{lemma}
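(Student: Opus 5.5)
The plan is to show that $\SepCoded^B_\rho$ is $\Sigma_1(H(\omega_2))$ in the parameters $\rho$, $B$ and $\omega_1$, and then to use the correspondence between $\Sigma_1(H(\omega_2))$ definitions and $\Sig^1_1$ sets, which is available under $\omega_1^{<\omega_1}=\omega_1$ \cite{LueckeSigma11Definability}. Two parameters need attention. The ordinal $\rho<\omega_2$ can be replaced by a code in $2^{\omega_1}$, namely a well-order of $\omega_1$ of type $\rho$. The set $B\in H(\omega_2)$ is coded by an element of $2^{\omega_1}$ obtained by fixing a bijection $\omega_1\to B\cup\{\rho\}$ together with codes for the ordinals involved. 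This is the sense of ``uniformly in a code for $B$''.

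\textbf{Step 1: the witness is a single element of $H(\omega_2)$.} The definition reads
\[
\exists \eta\ \exists d\ \exists N\ \bigl[\eta\notin B \wedge d:\omega_1\to\omega_1 \text{ strictly increasing} \wedge N \text{ transitive of size }\omega_1 \wedge \eta,d,x,p,\rho\in N \wedge N\models \Phi(\eta,d,x,m,k,p,i,\rho)\bigr],
\]
where $\Phi$ expresses clauses (i)--(iv) of Definition~\ref{def:direct-sepcode} inside $N$. Clause (iii) is a $\Delta_0$ condition on $d$. Clause (ii) is also $\Delta_0$, because $t\mapsto X_t$ is a fixed constructible recursive interleaving. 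For clause (i), $N$ computes $\vecS$, the maps $e_\eta$ and the blocks through the fixed $\Sigma_1(H(\omega_2),\omega_1)$ formulas of Lemma~\ref{lem:preliminary-model}(ii). The definition only asks $N$ to \emph{verify} these decodings, so this is a statement about $N$ and not about $V$. Clause (iv) is a first-order statement in $N$.

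The whole matrix is therefore $\Delta_0$ in the witnesses $N,\eta,d$ and the parameters $x,p,\rho,B,\omega_1$. Here ``$N\models\Phi$'' for a set $N$ is computed by a $\Delta_1$ satisfaction relation, which is $\Delta_0$ once the satisfaction relation is added as a further witness. Since $\eta<\omega_2$, $d\subseteq\omega_1\times\omega_1$ and $N$ has size $\omega_1$, every witness lies in $H(\omega_2)$. So the relation is $\Sigma_1(H(\omega_2))$ in $(x,p,\rho,B,\omega_1)$, with the definition uniform in $m,k,i$ because these are integers substituted into a fixed formula.

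\textbf{Step 2: conversion to $\Sig^1_1$.} Under CH, each $N$ together with its satisfaction relation, $d$ and $\eta$ is coded by some $y\in2^{\omega_1}$ as a well-founded extensional relation on $\omega_1$. The conditions that this coded structure satisfies the $\Delta_0$ matrix and contains $x,p$ and the coded $\rho,B$ at designated points are then closed, i.e.\ pointwise checkable at each countable stage, in $(x,p,y,\text{codes})$. The exception is well-foundedness of the coded relation on $\omega_1$. This is the main obstacle, since it is not closed, but it is handled in the standard way of \cite{LueckeSigma11Definability}. One adds a further existential witness, a rank function $f:\omega_1\to\omega_1$, or equivalently a code for the transitive collapse, and the condition that $f$ is order-preserving is closed. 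I would cite that correspondence directly instead of reproving it.

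Two smaller points remain. Membership $\eta\notin B$ is checked against the code for $B$, which is a closed condition on the coordinates picked out by $y$. The clause ``$N$ contains a cofinal branch'' is internal to $N$, so no external absoluteness of Suslin-tree branches is needed at this stage.

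Combining the two steps, $\SepCoded^B_\rho$ is the projection of a closed set in the parameters, i.e.\ boldface $\Sig^1_1$, uniformly as claimed. Taking $B=\emptyset$ gives the final sentence of Lemma~\ref{lem:direct-sepcode-sigma11}, because $\SepCoded_\rho=\SepCoded^{\emptyset}_\rho$.
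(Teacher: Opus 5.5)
Your argument is correct and follows essentially the paper's route. The witness is an $\omega_1$-code for $\eta$, $d$ and a well-founded extensional structure collapsing to $N$. The clause $\eta\notin B$ is checked against $B$ used as a boldface parameter, everything else is verified inside $N$, and under CH the $\Sigma_1(H(\omega_2))$ presentation becomes a projection of a closed set.

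One remark in Step 2 is wrong, although your fallback citation of L\"ucke's correspondence means the argument survives it. Well-foundedness is not an obstacle, and the rank function $f:\omega_1\to\omega_1$ you propose cannot exist. Since $N$ contains $d$ and cofinal branches of length $\omega_1$, its $\in$-rank exceeds $\omega_1$. So the coded relation has an element of rank $\omega_1$ and admits no strictly order-preserving map into $(\omega_1,<)$. The alternative ``code for the transitive collapse'' is circular.

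The correct point, which is what the paper means by ``coded in the usual closed way'', is that well-foundedness is already closed. Because $\mathrm{cf}(\omega_1)>\omega$, every descending $\omega$-sequence lies below some $\gamma<\omega_1$. Hence the set of codes of well-founded relations is the intersection over $\gamma<\omega_1$ of the clopen sets $\{y: E_y\cap\gamma^2 \text{ is well-founded}\}$, and so it is closed. Extra $\omega_1$-length witnesses are needed only for extensionality and for Skolem functions of the existential quantifiers evaluated in the coded structure, not for well-foundedness.
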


\begin{proof}
The witness is a subset of $\omega_1$ coding the container $\eta$, the increasing
cofinal trace $d$, and a well-founded extensional structure whose transitive
collapse is the model $N$.
The additional requirement for
$\SepCoded^B_\rho$ is the bounded check $\eta\notin B$, using $B$ as a boldface
parameter.
Since $B\in H(\omega_2)$, this is an allowable parameter in the
$\Sigma_1(H(\omega_2))$ presentation of boldface $\Sigma^1_1$.
The verification inside $N$ is bounded: the preliminary tree apparatus $\vecS$
and the fixed container/block indexing are
$\Sigma_1(H(\omega_2),\omega_1)$-definable, and the branch assertions are
checked by the presence of the corresponding $\omega_1$-sequences of tree nodes
in $N$.
The function $d$ is an arbitrary cofinal trace; the definition does
not require $d$ to be a subtrace of any particular derived reservoir trace.
Well-foundedness and extensionality are coded in the usual closed way by an
$\omega_1$-length witness.
Thus the whole relation is a projection of a
closed condition on an $\omega_1$-code.
\end{proof}

\begin{lemma}
\label{lem:intentional-direct-codes-decoded}
Suppose a clean finite use of a container $I_\eta$ codes the tuples
$t_0,\ldots,t_{r-1}$ using pairwise distinct reservoir indices
$\nu_0,\ldots,\nu_{r-1}$, and hence the pairwise almost-disjoint traces
$T_{\nu_0},\ldots,T_{\nu_{r-1}}$.
Let $B\subseteq\omega_2$ with $\eta\notin B$.
Then, after forcing with these direct coding blocks, each $t_\ell$ satisfies its
corresponding direct separation code relative to $B$.
\end{lemma}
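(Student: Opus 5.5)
The plan is to verify the clauses of Definition~\ref{def:direct-sepcode} one at a time, with container $\eta$, for a fixed $\ell<r$. Let $G$ be generic for the countable-support product $\prod^{\rm cs}_{\ell<r}\forceB_{\eta,\nu_\ell,t_\ell}$ over the ambient model. Because $\eta\notin B$, the extra requirement in $\SepCoded^B_{\rho}$ holds automatically. So it suffices to produce a strictly increasing $d:\omega_1\to\omega_1$ and an $\omega_1$-code of a transitive model $N$ witnessing clauses (i)--(iv) for $t_\ell$.

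\emph{Step 1: the branches exist.} By Lemma~\ref{lem:direct-block-E-complete}, each block is $E$-complete. Hence the product adds no reals and preserves $\omega_1$, and in particular the Suslin trees keep height $\omega_1$. For each $(\gamma,\alpha)$ with $\gamma\in T_{\nu_\ell}$ and $\alpha<\omega_1$, a density argument shows that the $\ell$-th factor of $G$ yields a cofinal branch $b_{\gamma,\alpha}$ through $S^{X_{t_\ell}(\alpha)}_{\eta,\gamma,\alpha}$. The dense sets are those conditions whose $(\gamma,\alpha)$-coordinate has height above a given $\beta<\omega_1$. Such conditions are dense because any node can be extended to higher levels: the trees are normal, and by Lemma~\ref{lem:preliminary-model} they remain Suslin (and so remain $\omega_1$-trees) in the ground model of this forcing.

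\emph{Step 2: choose $d$.} Let $d$ enumerate $T_{\nu_\ell}$ in increasing order. This set is unbounded in $\omega_1$ by Lemma~\ref{lem:reservoir-trace-independence}, so $d$ is strictly increasing with cofinal range. We do not need to pass to a subtrace avoiding the other traces $T_{\nu_{\ell'}}$. Clause (iv) only asks for branches through the trees indexed by $X_{t_\ell}$, and these exist on every block $\gamma\in T_{\nu_\ell}$, whether or not another use also writes there. If some other $t_{\ell'}$ writes into a block $\gamma$ of the same container, that only adds further branches and does not remove ours.

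\emph{Step 3: build $N$.} Take a transitive model of a suitable fragment of ZFC$^-$ of size $\omega_1$ containing $\omega_1$, $d$, the sequence $\langle b_{d(\beta),\alpha}:\beta,\alpha<\omega_1\rangle$, $X_{t_\ell}$ and the relevant part of the apparatus. For instance, take the transitive collapse of an elementary submodel $N'\prec H(\omega_2)$ with $\omega_1\subseteq N'$, $|N'|=\omega_1$ and these objects in $N'$. Since $\omega_1\subseteq N'$, the collapse is the identity on subsets of $\omega_1$, and it maps each tree and branch to itself. Clauses (ii), (iii) and (iv) then hold in $N$ directly. Its $\omega_1$-code exists because CH still holds.

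\emph{Step 4, the main obstacle: clause (i).} This clause asks that $N$ decode $\vecS$ correctly via the $\Sigma_1(H(\omega_2),\omega_1)$ definition. Concretely, whenever $N$ sees a club disjoint from $E_{\theta(i,\xi,s)}$ it must really be the case that $s\in S^i_\xi$, and the converse must hold for the nodes relevant to (iv). The first direction requires that the coding forcing not kill the stationarity of any $E_\zeta$ with $\zeta\notin A_{\rm prep}$. I would argue this as follows. The product is $E$-complete, and it is a branch-adding forcing that does not interact with the coded sets $E_\zeta\subseteq E^*$. I would redo the preservation argument for stationarity of $E_\zeta$: the same fusion with models $M$, $M\cap\omega_1\in E_\zeta$ handled via the dense $\sigma$-closed presentation from Lemma~\ref{lem:CS-Jech-branch} over the pre-quotient, as in Lemma~\ref{lem:atomic-unused-coordinate-preservation}. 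This shows that every $E_\zeta$ which was stationary in $W$ stays stationary. With that in hand, upward absoluteness of the $\Sigma_1$ clubs already in $H(\omega_2)^W$ gives the converse direction, and elementarity of $N'$ transfers the correct decoding to $N$. This stationarity preservation through the quotient is where I expect the real work to lie.
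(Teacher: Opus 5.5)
Your proof is essentially the paper's: a cofinal $d$ with range inside $T_{\nu_\ell}$, the branches added by the $\ell$-th block, and $N$ the collapse of a size-$\omega_1$ model containing $d$ and these branches. Enumerating all of $T_{\nu_\ell}$ rather than the clean part $T_{\nu_\ell}\setminus\bigcup_{j\neq\ell}T_{\nu_j}$ is legitimate, since clause (iv) is purely positive and the paper's thinning is not needed for this statement. Your Step 4 overshoots, though. Here you only need that nodes of the actual trees are recognized in $N$, and this follows by putting into $N'$ the clubs from $W$ witnessing nonstationarity of the relevant $E_{\theta(i,\xi,s)}$ and using upward absoluteness. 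Preserving the stationarity of the uncoded $E_\zeta$ is a soundness issue that belongs to the no-unwanted-codes direction, and $E$-completeness would not give it anyway, since $E_\zeta\subseteq E^*$.
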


\begin{proof}
Fix $\ell<r$. By Lemma~\ref{lem:reservoir-trace-independence}, the set
\[
   T_{\nu_\ell}\setminus\bigcup_{j<r,\,j\neq\ell}T_{\nu_j}
\]
is unbounded in $\omega_1$.
Let $d:\omega_1\to\omega_1$ be a strictly increasing
enumeration of an unbounded subset of this set.
Along the range of $d$, no other
use of the container is active.
The direct coding block for $t_\ell$ therefore
adds the required branches through
\[
   S^{X_{t_\ell}(\alpha)}_{\eta,d(\beta),\alpha}
\]
for every $\beta,\alpha<\omega_1$.
Since $\eta\notin B$, taking $N$ to be the transitive collapse of a sufficiently
large initial segment containing the relevant apparatus, the branches just
added, and the function $d$ gives a witness to the direct separation code
relative to $B$.
\end{proof}

\subsection{No unwanted direct codes}

\begin{lemma}[No unwanted direct codes]\label{lem:no-unwanted-direct-codes}
Let $\forceP$ be a clean countable-support partial run of direct coding blocks,
and let $G\subseteq\forceP$ be generic.
Fix a container $\eta<\omega_2$. If
$W[G]$ satisfies
\[
   \SepCoded^\eta_\rho(x,m,k,p,i),
\]
then $\eta$ was intentionally used in the run to code the tuple
$(\rho,x,m,k,p,i)$.
Consequently, if $B\subseteq\omega_2$ and
$\SepCoded^B_\rho(x,m,k,p,i)$ holds, then it is witnessed by an intentional code
in some container outside $B$.
\end{lemma}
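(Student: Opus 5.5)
The plan is to argue by contraposition via the Suslinity of unused tree coordinates. Suppose $W[G]\models\SepCoded^\eta_\rho(x,m,k,p,i)$, witnessed by a strictly increasing $d:\omega_1\to\omega_1$ and a transitive model $N$. Put $t=(\rho,x,m,k,p,i)$. By clause (iv) of Definition~\ref{def:direct-sepcode}, for every $\beta,\alpha<\omega_1$ the model $N$, and hence $W[G]$, contains a cofinal branch through $S^{X_t(\alpha)}_{\eta,d(\beta),\alpha}$. Since a Suslin tree has no cofinal branch, each such tree coordinate $(X_t(\alpha),e_\eta(d(\beta),\alpha))$ must be destroyed in $W[G]$. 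The first step is therefore to invoke Lemma~\ref{lem:iterated-unused-coordinate-preservation}: any coordinate not used by any iterand remains Suslin. Consequently every coordinate $(X_t(\alpha),e_\eta(d(\beta),\alpha))$ lies in the tree-coordinate support of some block of the run. Because the containers $I_\eta$ are pairwise disjoint and $e_\eta$ is a bijection, only blocks $\forceB_{\eta,\nu,t'}$ using container $\eta$ can have these coordinates in their support. So $\eta$ was used at all, and its uses are finitely many clean uses with tuples $t_0,\dots,t_{r-1}$ and distinct reservoir indices $\nu_0,\dots,\nu_{r-1}$. This finiteness is presumably part of the cleanness hypothesis.

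The second step is decoding. For each $\beta$, the block index $\gamma=d(\beta)$ must lie in $\bigcup_{\ell<r}T_{\nu_\ell}$. Since $r$ is finite and $\operatorname{ran}(d)$ is unbounded, some single $\ell$ has $T_{\nu_\ell}\cap\operatorname{ran}(d)$ unbounded. By Lemma~\ref{lem:reservoir-trace-independence} we may shrink to an unbounded set of $\gamma\in\operatorname{ran}(d)$ lying in $T_{\nu_\ell}$ and in no other $T_{\nu_j}$. Fix one such $\gamma$. For each $\alpha$, the only intentional branch added through either tree of the pair $(S^0_{\eta,\gamma,\alpha},S^1_{\eta,\gamma,\alpha})$ is the one through $S^{X_{t_\ell}(\alpha)}_{\eta,\gamma,\alpha}$. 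The opposite tree $S^{1-X_{t_\ell}(\alpha)}_{\eta,\gamma,\alpha}$ is an unused coordinate, so by Lemma~\ref{lem:iterated-unused-coordinate-preservation} it stays Suslin and has no branch. Hence $X_t(\alpha)=X_{t_\ell}(\alpha)$ for all $\alpha$, so $X_t=X_{t_\ell}$, and injectivity of the coding gives $t=t_\ell$. Thus $\eta$ was intentionally used to code $t$. The consequence for $\SepCoded^B_\rho$ is immediate: the witnessing container $\eta\notin B$ carries an intentional code.

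The main obstacle is the first step, where we conclude from ``$S$ remains Suslin'' that no branch exists in $W[G]$, for coordinates that $N$ might handle oddly. Since $N$ is transitive and the branch lies in $N\subseteq W[G]$, and the tree $\vecS$ is absolutely decoded by clause (i), this should be fine. A subtler point arises when a coordinate $S^{j}_{\eta,\gamma,\alpha}$ is used with multiplicity by several blocks on overlapping (bounded) trace parts. Passing to $\gamma$ outside the other traces, via the bounded-overlap lemma, avoids this. Still, I would check carefully that Lemma~\ref{lem:iterated-unused-coordinate-preservation} applies with the full run, including blocks in other containers, as its ``not used by any iterand'' hypothesis.
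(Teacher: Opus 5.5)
Your proposal is correct and follows essentially the same route as the paper. Unused tree coordinates stay Suslin by Lemma~\ref{lem:iterated-unused-coordinate-preservation}, so the range of $d$ is covered by the finitely many traces $T_{\nu_\ell}$ of the intentional uses of $\eta$; almost-disjointness then gives a block $d(\beta)$ used only by $t_\ell$, where a mismatch $X_t(\alpha)\neq X_{t_\ell}(\alpha)$ would require a branch through an unused, hence still Suslin, tree. The only difference is cosmetic: you conclude $X_t=X_{t_\ell}$ coordinatewise and invoke injectivity, whereas the paper argues by contradiction from a single $\alpha$ with $X_{t_\ell}(\alpha)\neq X_t(\alpha)$.
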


\begin{proof}
Let
\[
   t=(\rho,x,m,k,p,i)
\]
and let $X_t$ be its canonical code. Let $d:\omega_1\to\omega_1$ be the
cofinal trace appearing in the alleged code.
Suppose first that the container $\eta$ was not intentionally used. Choose any
$\beta<\omega_1$ and any $\alpha<\omega_1$.
The alleged code requires a branch
through
\[
   S^{X_t(\alpha)}_{\eta,d(\beta),\alpha}.
\]
This tree coordinate is unused by the partial run.
By
Lemma~\ref{lem:iterated-unused-coordinate-preservation}, it remains Suslin after
forcing with $\forceP$. Therefore no cofinal branch through it can appear in
$W[G]$, contradiction.
Now suppose that $\eta$ was intentionally used finitely many times, coding
\[
   t_0,\ldots,t_{r-1}
\]
with reservoir indices
\[
   \nu_0,\ldots,\nu_{r-1}
\]
and derived traces $T_{\nu_0},\ldots,T_{\nu_{r-1}}$.
If some value
$d(\beta)$ is outside
\[
   \bigcup_{\ell<r}T_{\nu_\ell},
\]
then the same unused-coordinate argument gives a contradiction.
Hence the range
of $d$ is covered by this finite union of derived traces.
Since $d$ is cofinal,
there is some $\ell<r$ such that
\[
   \{\beta<\omega_1:d(\beta)\in T_{\nu_\ell}\}
\]
is unbounded.
By the almost-disjointness of the derived traces, we may choose
$\beta<\omega_1$ such that
\[
   d(\beta)\in T_{\nu_\ell}\setminus
   \bigcup_{j<r,\,j\neq\ell}T_{\nu_j}.
\]
Thus, at the block $d(\beta)$, the only intentional use of the container is the
code for $t_\ell$.
If $t_\ell\neq t$, choose $\alpha<\omega_1$ such that
\[
   X_{t_\ell}(\alpha)\neq X_t(\alpha).
\]
The alleged code for $t$ requires a branch through
\[
   S^{X_t(\alpha)}_{\eta,d(\beta),\alpha}.
\]
At this coordinate the unique intentional active code, namely the code for
$t_\ell$, uses the distinct tree
\[
   S^{X_{t_\ell}(\alpha)}_{\eta,d(\beta),\alpha}.
\]
Hence the tree required by the alleged code is unused by the partial run.
By
Lemma~\ref{lem:iterated-unused-coordinate-preservation}, it remains Suslin after
forcing with $\forceP$, contradicting the existence of the alleged branch.
Therefore $t_\ell=t$, and the alleged code is one of the intentional codes.
\end{proof}

\begin{remark}
Finite reuse of a container is handled by different derived reservoir traces,
not by spatially disjoint slots.
The derived traces may have countable
intersections, but they are pairwise almost disjoint.
Therefore each intentional
use has cofinally many clean blocks, and the no-unwanted-code argument can thin
any alleged cofinal trace to one of these clean blocks.
This is why the
definition of $\SepCoded$ needs only positive branch information.
\end{remark}

\section{Clean allowability and the separation iteration}\label{sec:allowability}

We now define the separation iteration.
Its atomic coding blocks are the direct
$E$-complete container codings introduced above, and the forcing is arranged to
preserve CH.
The activation and allowability terminology is a generalized
Baire-space version of the allowable iterations used in the classical
projective separation/reduction constructions
\cite{HoffelnerSigma13Separation,HoffelnerPi13Reduction,HoffelnerFailureReductionPresenceSeparation}
\subsection{Sealed containers}

\begin{definition}[Sealed containers]\label{def:sealed-containers}
During the main iteration we maintain an increasing sequence
\[
   \langle B_\alpha:\alpha<\omega_2\rangle
\]
of subsets of $\omega_2$.
The set $B_\alpha$ consists of the containers used
before stage $\alpha$; its elements are sealed.
A coding block inserted at
stage $\alpha$, and every coding block occurring inside an allowable forcing
inserted at stage $\alpha$, must use only containers outside $B_\alpha$.
If the
stage uses the set $E_\alpha^{\rm cont}$ of containers, then
\[
   B_{\alpha+1}=B_\alpha\cup E_\alpha^{\rm cont}.
\]
At limit stages take unions.
\end{definition}

\begin{lemma}\label{lem:sealed-size-new}
For every $\alpha<\omega_2$, $|B_\alpha|<\omega_2$.
\end{lemma}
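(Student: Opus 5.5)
The plan is to prove by induction on $\alpha<\omega_2$ that $|B_\alpha|\leq\omega_1$. This gives $|B_\alpha|<\omega_2$ since $\omega_2$ is preserved. The recursion in Definition~\ref{def:sealed-containers} has only three cases, so the induction splits accordingly. The base case is $B_0=\emptyset$.

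For the successor step I will use $B_{\alpha+1}=B_\alpha\cup E^{\rm cont}_\alpha$, so it suffices to show $|E^{\rm cont}_\alpha|\leq\omega_1$. The forcing inserted at stage $\alpha$ is either a single direct coding block or an allowable forcing built from such blocks. A single block $\forceB_{\eta,\nu,t}$ uses exactly one container. A finite product of blocks over one container also uses one container. An allowable forcing at stage $\alpha$ is, by construction, a countable-support partial run of length at most $\omega_1$, or at worst an object of size $\aleph_1$. This matches the hypothesis of Theorem~\ref{thm:abraham} that $|\dot\forceQ_\alpha|\leq\aleph_1$. Each of its atomic blocks names one container, so the set of containers it mentions has size at most $\aleph_1$. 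That set is read off from the forcing itself, hence lies in the intermediate model; by the $\omega_2$-c.c.\ of $\forceP_\alpha$ from Theorem~\ref{thm:abraham}, the name for it can be covered by a ground-model set of size $\omega_1$.

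For the limit step, let $\lambda<\omega_2$ be a limit. Then $B_\lambda=\bigcup_{\alpha<\lambda}B_\alpha$ is a union of at most $|\lambda|\leq\omega_1$ sets, each of size at most $\omega_1$ by induction. So $|B_\lambda|\leq\omega_1\cdot\omega_1=\omega_1$.

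I expect the successor step to be the main obstacle: justifying that each stage uses only $\omega_1$ many containers. This depends on the precise definition of allowable forcing, which comes later in the section. I would first check that definition and confirm that an allowable forcing is a run of at most $\omega_1$ blocks, so its container set is the image of an $\omega_1$-indexed sequence. If instead the definition permits $\omega_1$-sized forcings with arbitrary support, I would use the size bound $\aleph_1$ from Lemma~\ref{lem:direct-block-E-complete} together with the fact that every mentioned container appears in some condition. Under CH there are $\aleph_1$ conditions, each mentioning one container.
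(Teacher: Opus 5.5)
Your argument is correct and is essentially the paper's. Each stage seals at most $\omega_1$ containers, and a union of fewer than $\omega_2$ such sets stays below $\omega_2$; your explicit induction with $\omega_1\cdot\omega_1=\omega_1$ is just the regularity of $\omega_2$ that the paper invokes. The per-stage bound you were unsure about is immediate from Definition~\ref{def:zero-allowable-new}: an allowable forcing has a presentation in $H(\omega_2)$, so it mentions at most $\omega_1$ containers (its length need only be $<\omega_2$, not $\leq\omega_1$, but that suffices). So the $\omega_2$-c.c.\ covering step and the fallback size argument are unnecessary. The fallback would also not work as phrased, because a condition of $\forceB_{\eta,\nu,t}$ is a function from pairs of countable ordinals to tree nodes and does not record $\eta$.
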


\begin{proof}
Each stage inserts a forcing with a presentation in $H(\omega_2)$ and hence uses
fewer than $\omega_2$ many containers.
Since $\omega_2$ is regular, the union
of $<\omega_2$ many such sets still has size $<\omega_2$.
\end{proof}

\begin{definition}[Clean allowability]\label{def:clean-run-new}
A legal partial run is a countable-support iteration of direct coding blocks,
together with the bookkeeping data for the containers and derived reservoir traces used by
those blocks, which satisfies the syntactic requirements imposed on such blocks in
Section~\ref{sec:direct-container-coding}.
Let $B\subseteq\omega_2$. A legal partial run is
\emph{$B$-clean} if it avoids every container in $B$ and uses each container only
finitely many times, with pairwise distinct derived reservoir traces for the distinct uses
of that container.
A \emph{clean iteration} means a countable-support legal
partial run which is $B$-clean for the relevant sealed set $B$;
when $B$ is not
displayed, it is the sealed set fixed by the surrounding construction.
In the
actual iteration, used containers are sealed, so a container is not reused by later
stages.
\end{definition}

\subsection{Allowability}

\begin{definition}[$0$-allowable]\label{def:zero-allowable-new}
A forcing is $0$-allowable if it has a small presentation as a countable-support
clean iteration of direct coding blocks, in the sense of
Definition~\ref{def:clean-run-new} with no external sealed set.
Small means that
the presentation belongs to $H(\omega_2)$.
\end{definition}

\begin{lemma}\label{lem:zero-allowable-E-complete}
Every $0$-allowable forcing is $E$-complete.
If CH holds in the ambient model,
every atomic direct coding block has size $\aleph_1$.
\end{lemma}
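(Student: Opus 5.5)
The plan is to reduce the statement to the two lemmas that are already in place: Lemma~\ref{lem:direct-block-E-complete} for the single iterands, and Lemma~\ref{lem:E-complete-iteration} for the countable-support iteration. By Definition~\ref{def:zero-allowable-new}, a $0$-allowable forcing $\forceQ$ has a small presentation
\[
   \langle\forceP_\alpha,\dot\forceQ_\alpha:\alpha<\gamma\rangle
\]
as a countable-support clean iteration. Each iterand $\dot\forceQ_\alpha$ is (a name for) a direct coding block $\forceB_{\eta,\nu,t}$, or a finite product of such blocks sharing one container with distinct reservoir indices. Smallness gives $\gamma<\omega_2$ and that all parameters lie in $H(\omega_2)$. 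So it suffices to show that for every $\alpha<\gamma$,
\[
   \forceP_\alpha\forces\text{``}\dot\forceQ_\alpha\text{ is }E\text{-complete,''}
\]
and then apply Lemma~\ref{lem:E-complete-iteration}.

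To verify the displayed forcing statement, I work in $W[G_\alpha]$ with $G_\alpha$ generic for $\forceP_\alpha$. The hypotheses of Lemma~\ref{lem:direct-block-E-complete} must still hold there.
\begin{itemize}
\item \emph{CH.} By induction on $\alpha$, $\forceP_\alpha$ is $E$-complete, hence adds no reals by Lemma~\ref{lem:E-complete-no-reals}. Since $W\models$ CH by Lemma~\ref{lem:preliminary-model}, CH persists in $W[G_\alpha]$.
\item \emph{The $E$-good-level property.} An $E$-complete forcing adds no new $\omega$-sequences of ground-model objects: the argument of Lemma~\ref{lem:E-complete-no-reals} runs verbatim for names of $\omega$-sequences of nodes. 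So every increasing sequence $\langle s_n:n<\omega\rangle$ of nodes with heights cofinal in some $\delta\in E$ already lies in $W$. The trees $S^i_\xi$ themselves are unchanged, and $\bigcup_n s_n$ is a node on level $\delta$ by Lemma~\ref{lem:preliminary-model}(v).
\item \emph{Stationarity of $E$.} This is preserved by Lemma~\ref{lem:E-complete-no-reals}.
\end{itemize}
Lemma~\ref{lem:direct-block-E-complete} then gives that $\dot\forceQ_\alpha$ is $E$-complete in $W[G_\alpha]$, including the finite-product case. The fact that the trace sets $T_\nu$ are computed in $W$ is harmless, because they are ground-model parameters.

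The induction is arranged so that the claim "$\forceP_\beta$ is $E$-complete" for all $\beta<\alpha$ yields it for $\forceP_\alpha$, via Lemma~\ref{lem:E-complete-iteration} applied to the initial segment. The successor and limit cases are handled uniformly by that lemma. For the size clause, each atomic block is computed in a model of CH, as just shown. Its conditions are countable partial functions on a domain of size $\aleph_1$ with values in trees of size $\aleph_1$, so there are $\aleph_1^{\aleph_0}=\aleph_1$ of them, exactly as in Lemma~\ref{lem:direct-block-E-complete}.

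The main obstacle is the transfer of the good-level property into intermediate models. In the fusion step, the increasing sequence of nodes at a coordinate is built along a sequence lying in $W[G_\alpha]$. I would argue first, as above, that no new $\omega$-sequences appear. Should that be delicate, my fallback is a different fusion. I would perform it in the full iteration over $W$: build the lower bound coordinatewise along an $M$-generic sequence with $M\cap\omega_1\in E$, and read off each coordinate's nodes in $W$ from the decided values. The good-level property would then be applied in $W$ directly.
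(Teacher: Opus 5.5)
Your reduction is the paper's own three-line proof: Lemma~\ref{lem:direct-block-E-complete} for each iterand, then Lemma~\ref{lem:E-complete-iteration} for the run. Your added step is sound. An $E$-complete $\forceP_\alpha$ is $\omega$-distributive, so CH holds in $W[G_\alpha]$ and every $\omega$-sequence of nodes there already lies in $W$. The size clause is also fine.

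The trouble is the fact both arguments rest on, the $E$-good-level property of Lemma~\ref{lem:preliminary-model}(v). You singled out this property as the main obstacle, but the problem is not its transfer to $W[G_\alpha]$. It is false already in $W$, so your fallback of applying it in $W$ directly does not help either. Each $S^i_\xi$ is a normal Jech tree with countable levels in which every node splits. Given a limit $\delta<\omega_1$, enumerate level $\delta$ as $\langle u_j:j<\omega\rangle$ and fix $\beta_j$ cofinal in $\delta$. Build $s_0<s_1<\cdots$ with $\operatorname{ht}(s_{j+1})\geq\beta_j$ and $s_{j+1}$ not below $u_j$; this is possible because of two incompatible extensions, at most one lies below $u_j$. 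Then no node on level $\delta$ lies above all $s_n$, whether or not $\delta\in E$. The $\sigma$-closed presentation of Lemma~\ref{lem:Jech-finite-branches} gets its lower bounds by adding a new top level to the tree approximation. That move is no longer available once the tree is fixed.

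The same diagonalization shows that the first sentence of the statement is false, so no argument along these lines can be completed. Let $\forceB=\forceB_{\eta,\nu,t}$; it is a one-step clean run, hence $0$-allowable. Let $M\prec H(\Theta)$ be countable with $\forceB\in M$ and $\delta=M\cap\omega_1\in E$. Fix a coordinate $c\in M$ of $\forceB$ with tree $S$, so that $S\restr\delta\subseteq M$. Enumerate level $\delta$ of $S$ as $\langle u_j:j<\omega\rangle$ and the dense open subsets of $\forceB$ lying in $M$ as $\langle D_j:j<\omega\rangle$. Build $q_0\geq q_1\geq\cdots$ in $M$ as follows:
\begin{itemize}
\item $q_{2j+1}\in D_j$;
\item $q_{2j+2}$ is obtained from $q_{2j+1}$ by replacing its $c$-th node with an extension of height $<\delta$ that is not below $u_j$.
\end{itemize}
This sequence is $M$-generic, and by genericity the heights of the nodes $q_n(c)$ are cofinal in $\delta$. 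A lower bound $q_\omega$ would give $q_\omega(c)$ height $\geq\delta$, so the node of $S$ on level $\delta$ below or equal to $q_\omega(c)$ is some $u_j$. But $q_{2j+2}(c)$ has height $<\delta$, lies below $q_\omega(c)$, and is not below $u_j$, which is impossible since the predecessors of $q_\omega(c)$ form a chain. So $\forceB$ is not $E$-complete and Lemma~\ref{lem:direct-block-E-complete} fails. Of the statement, only the size clause survives.
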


\begin{proof}
Atomic direct coding blocks are $E$-complete by
Lemma~\ref{lem:direct-block-E-complete}. Countable-support iterations of
$E$-complete forcings are $E$-complete by Lemma~\ref{lem:E-complete-iteration}.
The size statement is part of Lemma~\ref{lem:direct-block-E-complete}.
\end{proof}

Suppose a pair $(m,k,p)$ is considered at stage $\rho$ and has not been
neutralized.
It becomes active at $\rho$ if no current $B_\rho$-clean allowable
forcing over $W[G_\rho]$ can force
\[
   \exists z\,\bigl(M_m(z,p)\wedge M_k(z,p)\bigr).
\]
At activation we record the sealed set $B_\rho$ and work from then on inside the
subclass of allowable forcings avoiding $B_\rho$ and obeying the side-placement
rule for this pair.
This set $B_\rho$ is also the error term for the final
separator: containers in $B_\rho$ may contain arbitrary pre-activation branch
noise and are ignored by the decoding relation used to define the separator.
This sealed class is denoted
\[
   \Gamma^*_{\rho}(m,k,p).
\]

\begin{definition}[Side-placement rule]\label{def:side-placement-new}
Let $(m,k,p)$ activate at stage $\rho$.
At a later stage $\beta\geq\rho$, if
the bookkeeping presents $x$, then:
\begin{enumerate}[label=\textup{(\roman*)}]
\item if some $B_\beta$-clean member of $\Gamma^*_{\rho}(m,k,p)$ forces
      $M_m(x,p)$, the construction directly codes
      $(\rho,x,m,k,p,0)$ into a fresh container outside $B_\beta$;
\item otherwise it directly codes $(\rho,x,m,k,p,1)$ into a fresh container
      outside $B_\beta$.
\end{enumerate}
The side-one clause is a default placement.
\end{definition}

\begin{lemma}\label{lem:concat-new}
The allowable classes, and the sealed classes $\Gamma^*_{\rho}(m,k,p)$, are
closed under countable-support concatenation of legal clean partial runs.
\end{lemma}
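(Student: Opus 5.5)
The proof is a direct check against the definitions. Let $\dot\forceQ_0$ and $\dot\forceQ_1$ be allowable, or members of a fixed sealed class $\Gamma^*_{\rho}(m,k,p)$, with $\dot\forceQ_1$ a $\dot\forceQ_0$-name. We show that $\dot\forceQ_0*\dot\forceQ_1$, and more generally a countable-support concatenation $\langle \dot\forceQ_n:n<\omega\rangle$, lies in the same class. We verify, one at a time, the clauses defining membership: (a) the concatenation is a legal countable-support partial run of direct coding blocks; (b) it has a small presentation; (c) it is $B$-clean for the relevant sealed set; (d) for $\Gamma^*_{\rho}(m,k,p)$, it obeys the side-placement rule of Definition~\ref{def:side-placement-new}.

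For (a), write each $\dot\forceQ_n$ as a countable-support iteration of direct coding blocks, so that the concatenation is again a countable-support iteration whose iterands are those blocks in order. The syntactic requirements of Section~\ref{sec:direct-container-coding} are local to each block, so they persist. For (b), the combined presentation is a countable sequence of presentations in $H(\omega_2)$. Under CH it therefore remains in $H(\omega_2)$, and CH holds in the intermediate models by Theorem~\ref{thm:abraham} and Lemma~\ref{lem:zero-allowable-E-complete}. $E$-completeness follows separately from Lemma~\ref{lem:E-complete-iteration}.

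The main obstacle is (c), and it is where I would spend the effort. Each piece avoids $B$, so the concatenation avoids $B$ as well. The difficulty is the finiteness and distinctness requirements on container reuse: two pieces might each use the same container $\eta$, possibly with the same reservoir index $\nu$, and infinitely many pieces might each use $\eta$ once. My first approach is to use the sealing convention of Definition~\ref{def:sealed-containers}. A legal concatenation is formed so that the later piece is clean relative to the sealed set enlarged by all containers used by earlier pieces; this is the sense in which the lemma says "legal clean partial runs." With that convention, distinct pieces use disjoint sets of containers. Each container is then used only within a single piece, so only finitely often and with distinct reservoir traces, and cleanness is inherited. If the definition is instead read without enlarging $B$, I would reindex: apply a $W$-automorphism induced by a permutation of containers and of reservoir indices to move the later pieces onto fresh containers. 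Homogeneity of $\Add_{\omega_1}(\omega_2)$ and of the Jech product makes this permutation harmless, and there are $\omega_2$ many unused containers because of Lemma~\ref{lem:sealed-size-new} and smallness.

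For (d), the side-placement rule is a condition on each coding block of a tuple $(\rho,x,m,k,p,i)$ placed at a stage $\beta\ge\rho$. The condition refers to the $B_\beta$-clean members of $\Gamma^*_\rho(m,k,p)$ in the intermediate model at that stage. Every block of the concatenation sits inside some piece and is evaluated at the same intermediate model there, so the rule holds for each block because it held in that piece. The concatenation therefore lies in $\Gamma^*_{\rho}(m,k,p)$.
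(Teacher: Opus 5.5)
Your argument takes the same route as the paper's proof: a clause-by-clause check that legality, smallness, avoidance of $B$ and the side-placement requirements carry over from the pieces to the concatenation. On (c) you are more careful than the paper. The paper's proof only notes that avoiding $B$ is inherited. It never addresses the clause of Definition~\ref{def:clean-run-new} that limits container reuse to finitely many uses with pairwise distinct traces, and that clause is a condition on the whole run, not on each piece separately. Your first resolution is the right one: read ``legal clean'' as requiring each later piece to be clean relative to $B$ enlarged by the containers of the earlier pieces. This is exactly what holds in the only application, the three-step forcing in Lemma~\ref{lem:no-bad-side-placements-new}. There the two tails come from the actual iteration, where containers are sealed after use, and $\mathbb Q^*$ avoids $B_\gamma$.

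You should drop the reindexing fallback. Under the literal reading, without enlarging $B$, the closure claim is false. Take $\forceB_{\eta,\nu,t}$ followed by $\forceB_{\eta,\nu,t'}$ with $t\neq t'$. Each factor is clean, but the concatenation uses $\eta$ twice with the same trace $T_\nu$. On every block $\gamma\in T_\nu$ it adds branches through both trees of the pair at each $\alpha$ with $X_t(\alpha)\neq X_{t'}(\alpha)$. That mixed pattern is exactly what the proof of Lemma~\ref{lem:no-unwanted-direct-codes} needs cleanness (distinct traces) to exclude. Relocating the second piece cannot repair this. It produces a different forcing, so it says nothing about whether the given concatenation lies in the class. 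In addition, the homogeneity you invoke is not available for tree coordinates, because $\vecS$ is definable in $W$ from $\omega_1$ through the stationary coding. The only permutations the paper uses act on the reservoir product.
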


\begin{proof}
Concatenating presentations gives another legal presentation. Requirements
already imposed on each piece are still obeyed in the concatenation.
If both
pieces avoid a sealed set $B$, then so does the concatenation.
New names which
appear only after the concatenation need not have been handled before they are
presented by the bookkeeping;
this is the usual partial-run convention.
\end{proof}

\subsection{The global iteration}

Fix a bookkeeping function
\[
   F:\omega_2\to H(\omega_2)
\]
which lists all relevant names for triples $(x,p,m,k)$ unboundedly often.
We
build a countable-support iteration
\[
   \langle\forceP_\alpha,\dot\forceQ_\alpha:\alpha<\omega_2\rangle
\]
over $W$, simultaneously with the sealed-container sequence
$\langle B_\alpha:\alpha<\omega_2\rangle$.
At stage $\alpha$, if $F(\alpha)$ is not a well-formed name for a tuple
$(x,p,m,k)$, the iterand is trivial.
Otherwise evaluate the tuple in
$W[G_\alpha]$.

If the pair $(m,k,p)$ has already been neutralized, do nothing.
If the current
model already satisfies
\[
   \exists z\,\bigl(M_m(z,p)\wedge M_k(z,p)\bigr),
\]
declare the pair neutralized.
If some current $B_\alpha$-clean allowable forcing
can force this intersection, insert the $<_L$-least such forcing and declare the
pair neutralized.
If no such forcing exists, activate the pair if it is not yet active. Let
$\rho$ be its activation stage.
Then handle the present point $x$ according to
Definition~\ref{def:side-placement-new}, using a fresh container outside
$B_\alpha$ and sealing it after use.
\begin{lemma}\label{lem:iteration-preservation-new}
The final forcing $\forceP_{\omega_2}$ is $E$-complete, $E$-proper, adds no reals,
and satisfies the $\omega_2$-chain condition.
Consequently the full extension
preserves cardinals and satisfies
\[
   2^\omega=\omega_1,
   \qquad
   2^{\omega_1}=\omega_2.
\]
\end{lemma}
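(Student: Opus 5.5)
The plan is to check that every iterand $\dot\forceQ_\alpha$ is, in $W[G_\alpha]$, $E$-complete of size $\aleph_1$, and then apply the general iteration results of Section~\ref{sec:E-complete}. By construction each stage inserts one of three things: the trivial forcing; the $<_L$-least $B_\alpha$-clean allowable forcing; or a single direct coding block $\forceB_{\eta,\nu,t}$ into a fresh container. Trivial forcing is clearly $E$-complete. A single block is $E$-complete and has size $\aleph_1$ by Lemma~\ref{lem:direct-block-E-complete}, provided $W[G_\alpha]$ satisfies CH and the Suslin apparatus still has the $E$-good-level property there. An allowable forcing is a small countable-support clean iteration of such blocks, so it is $E$-complete by Lemma~\ref{lem:zero-allowable-E-complete} (the sealed variants are subclasses of this). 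Because its presentation lies in $H(\omega_2)$, its length is $<\omega_2$, and under CH its dense subset of countably supported conditions has size $\aleph_1$. Lemma~\ref{lem:E-complete-iteration} then gives that $\forceP_{\omega_2}$ is $E$-complete. By Lemma~\ref{lem:E-complete-no-reals} it is $E$-proper, adds no reals, and preserves $\omega_1$.

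This argument is circular as stated: we need CH and the good-level property at each $\alpha$ before we can verify the hypotheses at stage $\alpha$. I would resolve this by induction on $\alpha<\omega_2$.

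\textbf{CH.} Suppose $\forceP_\alpha$ is $E$-complete. Then it adds no reals, and since $W\models 2^\omega=\omega_1$ by Lemma~\ref{lem:preliminary-model}, CH holds in $W[G_\alpha]$.

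\textbf{Good-level property.} No new countable sequences of ordinals are added: an $E$-complete forcing adds no $\omega$-sequences, by the same fusion argument as in Lemma~\ref{lem:E-complete-no-reals} applied to names for functions $\omega\to\mathrm{Ord}$. Hence every increasing $\omega$-sequence of nodes in $W[G_\alpha]$ already lies in $W$. The tree $S^i_\xi$ itself is unchanged, so the $E$-good-level property persists from $W$. This is what makes Lemma~\ref{lem:direct-block-E-complete} applicable at stage $\alpha$.

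With CH and the size bounds in hand at every stage, Theorem~\ref{thm:abraham} applies to the whole iteration of length $\omega_2$. It yields the $\omega_2$-chain condition, and CH in every bounded intermediate model, which is consistent with the induction above.

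For the cardinal arithmetic: $\omega_1$ is preserved by $E$-properness, and $\omega_2$ and all larger cardinals by the $\omega_2$-c.c. We get $2^\omega=\omega_1$ because no reals are added. For $2^{\omega_1}\le\omega_2$, I count nice names. $W$ satisfies $2^{\omega_1}=\omega_2$, the forcing has a dense subset of size $\omega_2$ (under CH in $W$, since each initial segment does), and it has the $\omega_2$-c.c. So there are at most $(\omega_2^{\omega_1})^{\omega_1}=\omega_2$ nice names for subsets of $\omega_1$. Here $\omega_2^{\omega_1}=\omega_2$ holds in $W$ because GCH from $L$ is preserved by the preliminary forcing. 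The reverse inequality $2^{\omega_1}\ge\omega_2$ is witnessed by the Cohen reservoirs.

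The main obstacle I expect is the size bound and $E$-properness for iterands that are themselves allowable iterations of length up to $\omega_1$ (or more, below $\omega_2$). One must check that their presentation in $H(\omega_2)$ really yields a dense set of size $\aleph_1$ in $W[G_\alpha]$. I would first try to bound the length by $\omega_1$ using $H(\omega_2)$-smallness. I would then use CH to count countable-support conditions whose coordinates are hereditarily countable names; for this, each coordinate iterand must have size $\aleph_1$ in all intermediate models of the inner iteration. That in turn needs the same no-new-$\omega$-sequences argument applied internally.
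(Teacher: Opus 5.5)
Relative to the paper's earlier lemmas, your proposal is correct and is essentially the paper's proof: $E$-completeness of the iterands with Lemmas~\ref{lem:E-complete-iteration} and~\ref{lem:E-complete-no-reals}, Theorem~\ref{thm:abraham} for the $\omega_2$-c.c., and the reservoirs plus name counting for $2^{\omega_1}=\omega_2$, with one improvement: you make explicit the induction that the paper leaves implicit in its appeal to Theorem~\ref{thm:abraham}, namely that CH holds in $W[G_\alpha]$ and the good-level hypothesis persists, both because an $E$-complete $\forceP_\alpha$ adds no new $\omega$-sequences. Be aware that both arguments stand or fall with Lemma~\ref{lem:direct-block-E-complete}, and its $E$-good-level hypothesis from Lemma~\ref{lem:preliminary-model} cannot hold literally for the normal, countable-levelled Jech trees: below a limit $\delta$ there are $2^{\aleph_0}$ cofinal branches but only countably many nodes on level $\delta$.
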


\begin{proof}
Every atomic direct coding block is $E$-complete and, at every bounded stage,
has size $\aleph_1$.
The bounded-stage CH needed for the size computation is
provided by Theorem~\ref{thm:abraham}, since the iterands are $E$-proper of size
$\aleph_1$.
The global iteration is $E$-complete by
Lemma~\ref{lem:E-complete-iteration}, hence adds no reals by
Lemma~\ref{lem:E-complete-no-reals}. Therefore CH holds in the final model.
The same Abraham--Shelah theorem, Theorem~\ref{thm:abraham}, gives the
$\omega_2$-c.c. for the countable-support iteration of length $\omega_2$ of
$E$-proper size-$\aleph_1$ iterands.
Thus cardinals are preserved. The preliminary
forcing added $\omega_2$ many $\omega_1$-Cohen subsets of $\omega_1$, so
$2^{\omega_1}\geq\omega_2$.
The total forcing has size $\omega_2$ and satisfies
the $\omega_2$-c.c.; using GCH in $L$ and CH in the final model, the usual name
counting gives $2^{\omega_1}\leq\omega_2$.
\end{proof}

\begin{lemma}\label{lem:bounded-appearance-new}
Let $G\subseteq\forceP_{\omega_2}$ be generic over $W$. Every element of
$(2^{\omega_1})^{W[G]}$ belongs to some bounded intermediate extension
$W[G_\delta]$, $\delta<\omega_2$.
Consequently, if $M_m(x,p)$ holds in the
final model, then it already holds with witnesses in some bounded intermediate
model.
\end{lemma}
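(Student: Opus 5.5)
The plan is to use the $\omega_2$-chain condition from Lemma~\ref{lem:iteration-preservation-new} together with a standard nice-name argument. After that, the second assertion reduces to the first by coding the witness.

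\emph{Step 1: every $x\in(2^{\omega_1})^{W[G]}$ appears at a bounded stage.} Fix a $\forceP_{\omega_2}$-name $\dot x$ for $x$. For each $\xi<\omega_1$, choose in $W$ a maximal antichain $A_\xi$ of conditions deciding $\dot x(\xi)$. By the $\omega_2$-c.c.\ each $A_\xi$ has size at most $\omega_1$, so the union $A=\bigcup_{\xi<\omega_1}A_\xi$ has size at most $\omega_1$.

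Each condition in a countable-support iteration has countable support. So the union of the supports of the members of $A$ is a subset of $\omega_2$ of size at most $\omega_1$, and by regularity of $\omega_2$ it is bounded by some $\delta<\omega_2$.

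It remains to see that the conditions in $A$ can be taken in $\forceP_\delta$, so that $\dot x$ is equivalent to a $\forceP_\delta$-name. In a countable-support iteration a condition whose support lies below $\delta$ is, up to equivalence, the image of its restriction to $\delta$, provided the later coordinates are trivial. That holds here because the support is contained in $\delta$. Hence $x=\dot x^G$ is computed from $G_\delta=G\cap\forceP_\delta$ by asking which members of $A$ lie in $G$, and $x\in W[G_\delta]$.

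\emph{Step 2: witnesses.} Suppose $M_m(x,p)$ holds in $W[G]$, witnessed by $y$ with $\theta_m(x,y,p)$. Apply Step 1 to $x$, $p$ and $y$ separately to obtain stages $\delta_x,\delta_p,\delta_y$, and put $\delta=\max$ of these three. Then $x,p,y\in W[G_\delta]$.

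The matrix $\theta_m$ is closed, meaning that for every $\alpha<\omega_1$ the restrictions $(x\restr\alpha,y\restr\alpha,p\restr\alpha)$ lie in a fixed lightface tree. This tree is computed absolutely between $W[G_\delta]$ and $W[G]$, since neither model adds new elements of $2^{<\omega_1}$: the tail adds no reals and, by $E$-completeness (Lemma~\ref{lem:E-complete-iteration}), no new countable sequences of ordinals. So $\theta_m(x,y,p)$ holds in $W[G_\delta]$, and $M_m(x,p)$ holds there with the same witness.

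\emph{Main obstacle.} The delicate point is the name-reduction in Step 1. It needs the quotient $\forceP_{\omega_2}/G_\delta$ to be well behaved and the bookkeeping to be definable enough that $\forceP_\delta$ is a complete suborder of $\forceP_{\omega_2}$. This is standard for countable-support iterations, where $\forceP_\delta\lessdot\forceP_{\omega_2}$ always holds.

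A second concern is that the $\omega_2$-c.c.\ comes from Theorem~\ref{thm:abraham}, which requires CH at bounded stages. That is already recorded in Lemma~\ref{lem:iteration-preservation-new}, so I would simply cite it there. The absoluteness of closed matrices is routine once one notes that $2^{<\omega_1}$ is unchanged by the tail forcing, which follows from $E$-completeness.
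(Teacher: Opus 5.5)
Your proposal is correct and follows essentially the same route as the paper. The $\omega_2$-c.c.\ bounds the supports of the $\omega_1$ many antichains deciding $\dot x$ below some $\delta<\omega_2$, and the witness clause comes from applying this to $x,p,y$ and using absoluteness of the closed matrix. Your extra details, restricting the antichains to $\forceP_\delta$ and justifying absoluteness by noting that the tail adds no new elements of $2^{<\omega_1}$, fill in steps the paper leaves implicit.
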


\begin{proof}
Let $\dot x$ be a name for an element of $2^{\omega_1}$. For each
$\xi<\omega_1$, choose a maximal antichain deciding $\dot x(\xi)$.
The final
forcing is $\omega_2$-c.c., so each antichain has size at most $\omega_1$.
There
are $\omega_1$ many coordinates, and every condition has countable support.
Thus the union of the supports used by all these antichains has size at most
$\omega_1$, hence is bounded in $\omega_2$.
The name is therefore a
$\forceP_\delta$-name for some $\delta<\omega_2$.

If $M_m(x,p)$ holds, choose the $\Sigma^1_1$ witness $y\in2^{\omega_1}$ and apply
the previous paragraph to $x,p,y$.
Since the matrix $\theta_m$ is closed, its
truth is absolute once the relevant objects are present.
\end{proof}

\begin{lemma}\label{lem:tails-allowable-new}
If $(m,k,p)$ activates at stage $\rho$, then every later tail
$\forceP_\beta/G_\rho$, $\rho<\beta\leq\omega_2$, is a $B_\rho$-clean legal
partial run, in the sense of Definition~\ref{def:clean-run-new}, and belongs to
the sealed class $\Gamma^*_{\rho}(m,k,p)$.
\end{lemma}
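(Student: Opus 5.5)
The plan is to verify, stage by stage, that every iterand inserted after $\rho$ satisfies the syntactic constraints defining $B_\rho$-clean legal partial runs and the side-placement rule. Then Lemma~\ref{lem:concat-new} assembles the tail. First I fix notation: in $W[G_\rho]$, the quotient $\forceP_\beta/G_\rho$ is canonically equivalent to the countable-support iteration $\langle\dot\forceQ_\alpha:\rho\leq\alpha<\beta\rangle$. This is the standard factorization of countable-support iterations. So the tail is presented as a countable-support concatenation of the iterands $\dot\forceQ_\alpha$ for $\alpha\in[\rho,\beta)$, with the iterand at $\alpha$ interpreted over $W[G_\alpha]$. It therefore suffices, by Lemma~\ref{lem:concat-new}, to show that each individual iterand is a legal clean partial run avoiding $B_\rho$ and lying in $\Gamma^*_\rho(m,k,p)$.

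Next I proceed by cases on the construction at stage $\alpha\geq\rho$. There are four cases.

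\begin{enumerate}[label=\textup{(\alph*)}]
\item A trivial iterand, or a stage where the pair is already neutralized or neutralized without forcing. This is the empty run and is trivially clean.
\item A neutralizing stage. The construction inserts the $<_L$-least $B_\alpha$-clean allowable forcing. By definition it is a legal clean run avoiding $B_\alpha$.
\item A side-placement stage for some active pair. By Definition~\ref{def:side-placement-new}, a single direct coding block $\forceB_{\eta,\nu,t}$ is inserted into a fresh container $\eta\notin B_\alpha$.
\item A side-placement stage for the pair $(m,k,p)$ itself. This is a direct coding block of the form prescribed by the side-placement rule for this pair.
\end{enumerate}

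In every case the containers used lie outside $B_\alpha$. Since the sequence $\langle B_\alpha\rangle$ is increasing (Definition~\ref{def:sealed-containers}), $B_\rho\subseteq B_\alpha$, so each iterand avoids $B_\rho$. Moreover, the containers used at stage $\alpha$ are placed into $B_{\alpha+1}$ and hence sealed against later stages. So across the whole tail each container is used at most within one stage. Within a stage, uses come from a clean presentation, so they are finitely many and have distinct reservoir indices. This gives the finite-use and distinct-trace requirements of Definition~\ref{def:clean-run-new} for the concatenation. Legality, meaning that each block is a direct coding block with the required bookkeeping data, holds by construction.

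The remaining point, which I expect to be the main obstacle, is membership in $\Gamma^*_\rho(m,k,p)$. This class is not merely the class of $B_\rho$-clean runs: it also requires the side-placement rule for $(m,k,p)$. So at a stage $\beta'\geq\rho$ presenting a point $x$ for this pair, the inserted code must be the side-$0$ code exactly when some $B_{\beta'}$-clean member of $\Gamma^*_\rho(m,k,p)$ forces $M_m(x,p)$.

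I would handle this by a simultaneous induction on $\beta$. The hypothesis is that $\forceP_{\beta'}/G_\rho\in\Gamma^*_\rho(m,k,p)$ for all $\beta'<\beta$. Then at stage $\beta'$ the construction literally applies Definition~\ref{def:side-placement-new} for this pair, so appending that stage's block keeps the run inside the class. Stages devoted to other pairs or to neutralizations impose no constraint on $(m,k,p)$'s placement, since the rule only restricts how points presented for $(m,k,p)$ are coded. They are therefore admissible in $\Gamma^*_\rho(m,k,p)$ as long as they avoid $B_\rho$, which was shown above.

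At limit $\beta$ the tail is the countable-support limit of the earlier tails. Each requirement, namely avoiding $B_\rho$, finite use per container, and the side rule at each stage, is checked at individual stages or containers. It therefore passes to the limit, again as in Lemma~\ref{lem:concat-new}. Finally, since the quotient's presentation at bounded $\beta<\omega_2$ lies in $H(\omega_2)$ (each iterand is small, and $\beta<\omega_2$), the smallness clause holds. For $\beta=\omega_2$, I read membership as "every bounded initial segment belongs to the class", consistent with the partial-run convention.
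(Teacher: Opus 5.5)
\paragraph{Comparison with the paper.} Your overall route matches the paper's. $B_\rho$-avoidance comes from $B_\rho\subseteq B_\alpha$ and sealing, side-placement stages for $(m,k,p)$ obey the rule by construction, and the pieces are assembled by concatenation. The gap is in how you place the neutralization stages (your case (b)) into $\Gamma^*_\rho(m,k,p)$.

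\paragraph{The gap in case (b).} You argue that such stages ``impose no constraint on $(m,k,p)$'s placement, since the rule only restricts how points presented for $(m,k,p)$ are coded''. You then conclude they are admissible as soon as they avoid $B_\rho$. But the forcing inserted at a neutralization stage for another pair is a whole clean iteration of direct coding blocks. Being $B_\alpha$-clean does not prevent some of its blocks from coding tuples $(\rho,x,m,k,p,i)$. Such a block could put $i$ on the side contrary to Definition~\ref{def:side-placement-new}, or code a point that has already been assigned a side. So avoiding $B_\rho$ is not sufficient, and a tail containing such a block would not belong to $\Gamma^*_\rho(m,k,p)$. This matters downstream: Lemma~\ref{lem:right-avoidance-new} relies on every post-activation side-zero code for the pair having been placed according to the rule.

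\paragraph{The missing idea.} The paper compresses it into ``later requirements only shrink the class further''. After $(m,k,p)$ activates at $\rho$, the construction works only inside the subclass of allowable forcings that obey this pair's side-placement rule. So the ``current $B_\alpha$-clean allowable forcing'' chosen at any neutralization stage $\alpha\geq\rho$ is drawn from a class already contained in $\Gamma^*_\rho(m,k,p)$, intersected with the sealed classes of the other active pairs. With this observation, case (b) lands in $\Gamma^*_\rho(m,k,p)$ because of how the iterand is chosen, not by a vacuity argument. The rest of your argument then goes through: sealing, finite use because each container belongs to a single stage, the limit case, and the bounded-initial-segment reading for $\beta=\omega_2$.
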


\begin{proof}
After activation, all later stages obey the recorded side-placement requirement
for the pair.
All containers in $B_\rho$ remain sealed forever, so later tails
avoid them. Later requirements only shrink the class further.
Hence the tail
is a member of the sealed class attached at $\rho$.
\end{proof}

\subsection{Relocation by Cohen-reservoir homogeneity}\label{subsec:cohen-reservoir-relocation}

We next record the relocation principle used in the side-placement argument.
The
point is not that the branch-adding forcings attached to different Suslin-tree
coordinates are literally isomorphic. What we use is weaker.
The preliminary
reservoir forcing contains \(\omega_2\) many \(\omega_1\)-Cohen traces, and the
existence of an allowable witness is already forced by a small condition in this
homogeneous product.
By permuting the reservoir coordinates we can make such a
condition speak about fresh resources, and we can arrange that the shifted
condition still belongs to the fixed reservoir generic.
No definability of the
individual reservoirs is used here; only the product homogeneity and the fact
that all relevant names have support of size $<\omega_2$ are used.
Let
\[
   \mathbb C=\Add_{\omega_1}(\omega_2)^L
\]
be the countable-support product used in the preliminary construction to add the
reservoir sequence
\[
   \vec C=\langle C_\nu:\nu<\omega_2\rangle.
\]
Let \(G_C\subseteq\mathbb C\) denote the fixed reservoir generic. If
\(c\in\mathbb C\), write
\[
   \supp(c)=\{\nu<\omega_2:(\exists\xi<\omega_1)\,c(\nu,\xi)
             \text{ is defined}\}.
\]
Thus \(\supp(c)\) is countable.

For a clean allowable forcing \(\mathbb Q\), fix once and for all a small
presentation of \(\mathbb Q\) as a countable-support iteration of direct coding
blocks.
Let
\[
   \operatorname{Cont}(\mathbb Q)
\]
be the set of container indices occurring in this presentation, and let
\[
   \operatorname{Tr}(\mathbb Q)
\]
be the set of reservoir-trace indices occurring in it.
Since the presentation
belongs to \(H(\omega_2)\), both sets have cardinality \(<\omega_2\).
Similarly, for the actual iteration, let
\[
   R_\alpha
\]
denote the set of reservoir-trace indices used before stage \(\alpha\).
\begin{lemma}\label{lem:trace-support-small}
For every \(\alpha<\omega_2\),
\[
   |R_\alpha|<\omega_2.
\]
\end{lemma}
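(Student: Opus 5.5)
The plan is to copy the proof of Lemma~\ref{lem:sealed-size-new}, replacing containers by reservoir-trace indices and arguing by induction on $\alpha<\omega_2$. We track $R_\alpha$ alongside $B_\alpha$: if stage $\alpha$ inserts an iterand whose small presentation uses the set $\operatorname{Tr}(\dot\forceQ_\alpha)$ of reservoir-trace indices, then
\[
   R_{\alpha+1}=R_\alpha\cup\operatorname{Tr}(\dot\forceQ_\alpha),
\]
and at limit stages $R_\alpha=\bigcup_{\beta<\alpha}R_\beta$.

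First I would verify that each single stage contributes fewer than $\omega_2$ indices. The inserted iterand is one of three things: trivial, the $<_L$-least $B_\alpha$-clean allowable forcing witnessing neutralization, or a single direct coding block $\forceB_{\eta,\nu,t}$ given by the side-placement rule (Definition~\ref{def:side-placement-new}). A direct coding block uses exactly one reservoir index $\nu$. An allowable forcing has, by definition (Definition~\ref{def:zero-allowable-new} and the convention fixed just before the lemma), a small presentation in $H(\omega_2)$. So $\operatorname{Tr}(\mathbb Q)$ is a subset of $\omega_2$ lying in $H(\omega_2)$, and hence has size at most $\omega_1$.

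One point needs care. The iterand is only a $\forceP_\alpha$-name, so "$\operatorname{Tr}(\dot\forceQ_\alpha)$ has size $\le\omega_1$" holds in $W[G_\alpha]$, and the set is not yet known in $W$. I would handle this as follows:
\begin{itemize}
\item $\forceP_\alpha$ preserves cardinals, by Lemma~\ref{lem:iteration-preservation-new} applied to initial segments via Theorem~\ref{thm:abraham}.
\item $\forceP_\alpha$ has the $\omega_2$-c.c.
\item Consequently any set of ordinals below $\omega_2$ of size $\omega_1$ in $W[G_\alpha]$ is covered by a ground-model set of size $\omega_1$. Alternatively, since we work in $W[G]$, it suffices to compute cardinalities there, where $\omega_2$ is still $\omega_2$.
\end{itemize}
So the argument can be carried out entirely in the final model, where each $\operatorname{Tr}(\dot\forceQ_\beta)^{G}$ has size at most $\omega_1$.

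The induction step is then routine. The successor case is the union of two sets each of size at most $\omega_1$. At a limit $\alpha<\omega_2$, $R_\alpha$ is a union of $|\alpha|\le\omega_1$ many sets each of size at most $\omega_1$, hence has size at most $\omega_1<\omega_2$. This uses regularity of $\omega_2$, which the previous paragraph shows is preserved.

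The main obstacle is justifying that every inserted forcing really has a presentation in $H(\omega_2)$ of the current model, and that the ordinals it mentions are absolute enough to bound the count. I expect to dispatch this by appeal to the smallness clause in the definition of allowability, together with the cardinal preservation from Lemma~\ref{lem:iteration-preservation-new}.
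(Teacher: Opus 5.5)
Your proposal is correct and follows the same route as the paper. Each stage's iterand has a small presentation in $H(\omega_2)$, so it contributes at most $\omega_1$ reservoir-trace indices, and regularity of $\omega_2$ bounds the union over the $<\omega_2$ earlier stages. Your additional observation that the initial segments $\forceP_\alpha$ preserve cardinals makes explicit a point the paper leaves implicit: it is what makes the stagewise cardinality computations in $W[G_\alpha]$ meaningful.
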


\begin{proof}
At each stage the iterand has a small presentation and therefore uses fewer than
\(\omega_2\) many derived reservoir traces.
Since \(\omega_2\) is regular, the union of
\(<\omega_2\) many such sets still has cardinality \(<\omega_2\).
\end{proof}

If \(\pi\) is a permutation of \(\omega_2\), let
\[
   \widehat\pi:\mathbb C\to\mathbb C
\]
be the induced automorphism of the reservoir product:
\[
   \widehat\pi(c)(\pi(\nu),\xi)=c(\nu,\xi).
\]
We extend \(\widehat\pi\) recursively to \(\mathbb C\)-names in the usual way.
\begin{lemma}\label{lem:cohen-reservoir-overspill}
Let \(c\in\mathbb C\), and let
\[
   A_0,A_1,R\subseteq\omega_2
\]
have cardinality \(<\omega_2\), with \(A_0\cap A_1=\emptyset\).
Think of
\(A_0\) as the set of reservoir coordinates which must be fixed, \(A_1\) as the
set of coordinates supporting a witness, and \(R\) as the set of forbidden
coordinates.
Then below \(c\) it is dense to find a condition \(q\) and a permutation
\(\pi\) of \(\omega_2\) such that
\[
   \pi\restriction A_0=\operatorname{id},
   \qquad
   \pi[A_1]\cap R=\emptyset,
\]
and
\[
   q\leq c,\qquad q\leq\widehat\pi(c).
\]
Consequently, if \(c\in G_C\), then there is such a permutation \(\pi\) with
\[
   \widehat\pi(c)\in G_C.
\]
\end{lemma}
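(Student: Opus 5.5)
The plan is to make $\pi$ act only on the countable set $\supp(c)\cap A_1$, moving it to coordinates that nothing else mentions, so that $c$ and $\widehat\pi(c)$ turn out to be compatible outright. Fix $c$ and any $c'\leq c$; it suffices to find $q\leq c'$ and $\pi$ as required with $q\leq\widehat\pi(c)$. Since $\supp(c)$ is countable, put
\[
   a=\supp(c)\cap A_1 ,
\]
a countable set disjoint from $A_0$. The set $A_0\cup A_1\cup R\cup\supp(c')$ has size $<\omega_2$, so by regularity of $\omega_2$ we can choose a countable set $a'\subseteq\omega_2$ disjoint from it, with $|a'|=|a|$.

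Let $\pi$ be the permutation of $\omega_2$ that swaps $a$ with $a'$ via some bijection and is the identity elsewhere. Then $\pi\restriction A_0=\operatorname{id}$, because $A_0$ is disjoint from both $a$ and $a'$.

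For $\pi[A_1]\cap R=\emptyset$ a difficulty appears. Points of $A_1\setminus a$ are fixed by $\pi$, and such points may lie in $R$, so $\pi[A_1]\cap R$ need not be empty. To repair this, $\pi$ should move all of $A_1$ and not only $a$. Since $|A_1|<\omega_2$, choose $A_1'$ disjoint from $A_0\cup A_1\cup R\cup\supp(c')$ with $|A_1'|=|A_1|$, and let $\pi$ swap $A_1$ with $A_1'$, being the identity elsewhere. This gives $\pi\restriction A_0=\operatorname{id}$ and $\pi[A_1]=A_1'$, which is disjoint from $R$.

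Next we check that $c'$ and $\widehat\pi(c)$ are compatible. The support of $\widehat\pi(c)$ is $\pi[\supp(c)]$, which splits into two parts.
\begin{enumerate}[label=\textup{(\roman*)}]
\item On $\supp(c)\setminus(A_1\cup A_1')$ the permutation is the identity, so $\widehat\pi(c)$ agrees with $c$ there, and $c'$ extends $c$.
\item On $\pi[\supp(c)\cap A_1]\subseteq A_1'$ the coordinates are disjoint from $\supp(c')$, so there is no conflict.
\end{enumerate}
The remaining concern is $\supp(c)\cap A_1'$. Since $A_1'$ was chosen disjoint from $\supp(c')\supseteq\supp(c)$, this set is empty, so $\pi^{-1}$ never moves a coordinate of $c$ onto another one. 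Hence $q=c'\cup\widehat\pi(c)$ is a function with countable domain, so it is a condition, and it extends both $c'$ and $\widehat\pi(c)$. This proves the density claim.

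The consequence is where genericity enters, and the point to be careful about is that $\pi$ depends on $q$, so density of the set of $q$'s is exactly what is needed. Let
\[
   D=\{q:\ \exists\pi\ \text{as above with}\ q\leq\widehat\pi(c)\}.
\]
We have just shown that $D$ is dense below $c$, and $D\in L$ because $c,A_0,A_1,R$ are parameters there; if the sets are not in $L$, we apply the argument in $L$ to $L$-covers of size $<\omega_2$, which exist by the $\omega_2$-c.c. Since $c\in G_C$, genericity gives some $q\in G_C\cap D$ together with its witness $\pi$. As $G_C$ is a filter and $q\leq\widehat\pi(c)$, it follows that $\widehat\pi(c)\in G_C$.
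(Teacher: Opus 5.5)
Your proof is correct, and it differs from the paper's only in how the permutation is chosen. The paper, given $r\leq c$, takes $\pi$ fixing $A_0$ pointwise and moving all of $A_1\cup(\supp(c)\setminus A_0)$ off $R\cup\supp(r)$. Then $\widehat\pi(c)$ and $r$ share only $A_0$-coordinates, and there $\widehat\pi(c)$ agrees with $c$.

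You instead swap $A_1$ with a fresh block $A_1'$ avoiding $A_0\cup A_1\cup R\cup\supp(c')$, and fix everything else. Since $\supp(c)\cap A_1'=\emptyset$, $\widehat\pi(c)$ agrees with $c$ on all of $\supp(c)\setminus A_1$, where compatibility comes from $c'\leq c$. The moved part lands in coordinates that $c'$ does not mention.

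The paper's choice never has to look at $\supp(c)\setminus(A_0\cup A_1)$, because it relocates everything of $c$ outside $A_0$. Yours is a minimal, explicit involution, and it gives the explicit common extension $c'\cup\widehat\pi(c)$. You also note that $\pi$ depends on the extension, so density rather than a single automorphism is what drives the genericity step; the paper leaves this implicit.

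One caution about your aside on parameters not in $L$. Passing to $L$-covers does not work as stated, because your swap needs the sets playing the roles of $A_0$ and $A_1$ to be disjoint, and $L$-covers need not be. For instance, let $A_0=C$ be a Cohen subset of $\omega_1$ and $A_1=R=\omega_1\setminus C$. Every $L$-cover of $C$ meets $A_1\cap R$, since otherwise $C\in L$. So no permutation fixing that cover pointwise can move $A_1$ off $R$.

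A repair that works: $\widehat\pi(c)$ depends only on $\pi\restriction\supp(c)$. The relevant data are therefore the countable set $a_1=\supp(c)\cap A_1$, which lies in $L$ since no reals are added, and an ordinal $\beta<\omega_2$ bounding $A_0\cup A_1\cup R$.

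\begin{enumerate}
\item Run your density argument in $L$ for permutations that fix $\supp(c)\setminus a_1$ pointwise and send $a_1$ into $[\beta,\omega_2)\setminus\supp(c)$.
\item Genericity of $G_C$ supplies such a $\pi_0$ with $\widehat{\pi_0}(c)\in G_C$.
\item In the extension, replace $\pi_0$ by a permutation that agrees with it on $\supp(c)$, fixes $A_0$, and swaps $A_1$ with a fresh block above $\beta$ containing $\pi_0[a_1]$.
\end{enumerate}

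The paper's proof does not address this point at all; it tacitly treats the parameters as ground-model sets. So this does not count against your main argument.
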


\begin{proof}
Let \(r\leq c\) be arbitrary. Since
\[
   |A_0\cup A_1\cup R\cup\supp(r)|<\omega_2
\]
and \(\omega_2\) is regular, we may choose a permutation \(\pi\) of \(\omega_2\)
which fixes \(A_0\) pointwise and sends
\[
   A_1\cup(\supp(c)\setminus A_0)
\]
away from
\[
   R\cup\supp(r)
\]
except for the part of \(\supp(c)\) already lying in \(A_0\).
This is possible
because all mentioned sets have size \(<\omega_2\).

The conditions \(r\) and \(\widehat\pi(c)\) are compatible.
On coordinates in
\(A_0\), the condition \(\widehat\pi(c)\) agrees with \(c\), and \(r\leq c\).
On
coordinates outside \(A_0\), the support of \(\widehat\pi(c)\) was moved away
from \(\supp(r)\). Let \(q\) be a common extension of \(r\) and
\(\widehat\pi(c)\).
Then \(q\leq r\leq c\) and \(q\leq\widehat\pi(c)\). This
proves density below \(c\).
If \(c\in G_C\), the generic filter meets this dense set below \(c\).
Thus for
some \(q\in G_C\) and some such \(\pi\), we have \(q\leq\widehat\pi(c)\). Since
filters are upward closed, \(\widehat\pi(c)\in G_C\).
\end{proof}

The next lemma is the exact relocation statement used below. It deliberately
asserts only the existence of a fresh allowable witness.
It does not assert that
an already chosen branch-adding forcing is isomorphic to its relocated copy.
The
``support'' of a witness refers to the reservoir coordinates on which the small
presentation of the witness, including its auxiliary choices of traces and
containers, depends.
\begin{lemma}\label{lem:fresh-resource-relocation}
Work in an intermediate model containing the reservoir generic \(G_C\).
Let
\[
   B,R\subseteq\omega_2
\]
be forbidden sets of container and reservoir-trace indices, both of cardinality
\(<\omega_2\).
Suppose that \(x,p_0\in2^{\omega_1}\) have reservoir support
contained in a set \(A_0\subseteq\omega_2\) of cardinality \(<\omega_2\).
Suppose further that for some \(c\in G_C\) and some
\(A_1\subseteq\omega_2\) of cardinality \(<\omega_2\), disjoint from \(A_0\),
\(c\) forces over the reservoir product that there is a clean sufficiently
allowable forcing whose auxiliary resources are supported by \(A_1\) and which
forces
\[
   M_m(x,p_0).
\]
Then, in the actual reservoir extension, there is a clean sufficiently allowable
forcing \(\mathbb Q^*\) which uses no container from \(B\), uses no reservoir
trace from \(R\), and satisfies
\[
   \mathbb Q^*\forces M_m(x,p_0).
\]
\end{lemma}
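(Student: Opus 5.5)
The plan is to use the automorphism $\widehat\pi$ of the reservoir product $\mathbb C$ from Lemma~\ref{lem:cohen-reservoir-overspill} and transport the forcing statement made by $c$ along it. The assumption says
\[
   c\forces_{\mathbb C}\text{``}\exists\,\dot{\mathbb Q}\ (\dot{\mathbb Q}\text{ clean sufficiently allowable, resources supported by }A_1,\ \dot{\mathbb Q}\forces M_m(\check x,\check p_0))\text{''},
\]
where, strictly, $x,p_0$ are interpreted through $\mathbb C$-names $\dot x,\dot p_0$ whose reservoir support lies in $A_0$. We apply Lemma~\ref{lem:cohen-reservoir-overspill} with $A_0$ as the fixed set and $A_1$ as the witness support. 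The forbidden set is $R'=R\cup\{\nu:\text{some container in }B\text{ or trace in }R\text{ is attached to }\nu\}$, enlarged if necessary by the reservoir indices coding $B$. This set still has size $<\omega_2$. The lemma gives $\pi$ with $\pi\restriction A_0=\mathrm{id}$, $\pi[A_1]\cap R'=\emptyset$ and $\widehat\pi(c)\in G_C$.

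Next apply the symmetry lemma for the automorphism $\widehat\pi$. We get that $\widehat\pi(c)$ forces the $\widehat\pi$-image of the displayed statement. The names $\dot x,\dot p_0$ have support in $A_0$, so they are fixed by $\widehat\pi$. The notions ``clean'', ``allowable'' and ``$\forces M_m$'' are defined from the ground-model parameters only: the tree apparatus, the container partition and the bookkeeping of $L$. The reservoir traces $T_\nu$ are sent to $T_{\pi(\nu)}$, because $\widehat\pi$ carries $\dot C_\nu$ to $\dot C_{\pi(\nu)}$. Hence $\widehat\pi(c)$ forces that there is a clean sufficiently allowable $\dot{\mathbb Q}^*$ whose resources are supported by $\pi[A_1]$ and which forces $M_m(x,p_0)$. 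Since $\widehat\pi(c)\in G_C$, the statement holds in the actual reservoir extension. Let $\mathbb Q^*$ be such a witness there.

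To finish, we check that $\mathbb Q^*$ avoids $B$ and $R$. Its reservoir-trace indices lie in $\pi[A_1]$, which is disjoint from $R$. For containers, the container indices of a presentation are ground-model ordinals and are not moved by $\widehat\pi$. So we must also build a relocation of containers. Fix in $L$ a permutation $\sigma$ of $\omega_2$ moving $\operatorname{Cont}$ of the witness away from $B$ and fixing the containers used by $x,p_0$. The induced permutation of tree coordinates via the $e_\eta$ is an automorphism of $\forceJ_{\omega_2}$, and we apply the same symmetry argument to the Jech product together with the reservoir product. This requires the analogue of Lemma~\ref{lem:cohen-reservoir-overspill} for the countable-support Jech product, proved by the same density-plus-permutation argument since the Jech factors are identical.

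The main obstacle is this last step. The stationary coding $\forceP_{\rm stat}$ depends on $\vec S$ through $A_{\rm prep}$, so moving tree coordinates is not obviously an automorphism of $\forceP_{\rm prep}$. One must argue that the witness is already determined in $L[G_{\rm base}]$, or that the permuted stationary-coding iteration is again generic over the permuted base. The first attempt would be to note that the $E$-complete tail adds no relevant new subsets of the small support, and to work in $L[G_{\rm base}]$ throughout.
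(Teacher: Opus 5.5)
The reservoir part of your argument is the paper's argument. You apply Lemma~\ref{lem:cohen-reservoir-overspill} with $A_0$ fixed and $A_1$ pushed off $R$, transport the forced statement along $\widehat\pi$, and use $\widehat\pi(c)\in G_C$. The genuine gap is the container clause, which you locate correctly but leave open. A reservoir automorphism fixes every check name, and the container indexing (the partition $\langle I_\eta:\eta<\omega_2\rangle$ and the maps $e_\eta$) lies in $L$. So if $c$ forces that the witness uses a particular container $\eta\in B$, then $\widehat\pi(c)$ forces the same for every $\pi$. The paper's proof does not get past this point either. It only asserts that $\pi$ ``can also be chosen so that the shifted container choices avoid $B$'', which tacitly assumes that container choices are given by names that genuinely vary with the reservoir coordinates.

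Your proposed repair by a second permutation of Jech coordinates cannot supply the missing step. First, there is the obstruction you name: the stationary coding ties $s\in S^i_\xi$ to the nonstationarity of the fixed $L$-set $E_{\theta(i,\xi,s)}$. Retreating to $L[G_{\rm base}]$ does not help, because the intermediate model also contains iteration generics built from specific trees. More decisively, fixing the containers on which $x,p_0$ depend can clash with moving the witness's containers, and then the conclusion is simply false. Here is a counterexample:
\begin{itemize}
\item Work over $W$ with $A_0=R=\emptyset$, $B=\{\eta\}$ and $A_1=\{\nu\}$.
\item Pick $\gamma_0\in T_\nu$ and a condition $c\in G_C$ deciding enough of $C_\nu$ to force $\gamma_0\in T_\nu$.
\item Take any separation tuple $t$ and put $j=X_t(0)$.
\item Let $x\in L$ be arbitrary, and let $p_0$ code the Jech tree $S^{j}_{\eta,\gamma_0,0}$; it has empty reservoir support.
\item Let $M_m(x,p_0)$ assert that this tree has a cofinal branch.
\end{itemize}
The single block $\mathbb B_{\eta,\nu,t}$ witnesses the hypothesis, since under the paper's gloss of ``support'' a container given by a check name is supported by any $A_1$. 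Yet by Lemma~\ref{lem:iterated-unused-coordinate-preservation}, every clean run avoiding $\eta$ keeps $S^{j}_{\eta,\gamma_0,0}$ Suslin. So no $\mathbb Q^*$ avoiding $B$ forces $M_m(x,p_0)$.

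Thus no symmetry argument can produce the container clause as stated. The statement needs an additional hypothesis, for instance that the given witness already avoids $B$, so that only traces must be relocated. The use of the lemma in Lemma~\ref{lem:no-bad-side-placements-new} would then have to be reorganized, since there $B=B_\gamma$ is larger than the sealed set $B_\beta$ that the side-zero witness was chosen to avoid.
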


\begin{proof}
Apply Lemma~\ref{lem:cohen-reservoir-overspill} with the forbidden set enlarged
to include \(R\) and the reservoir supports of the names which decide the
container choices of the small presentation.
Since \(B\) has cardinality
\(<\omega_2\), the permutation can also be chosen so that the shifted container
choices avoid \(B\).
We obtain a permutation \(\pi\) of \(\omega_2\) such that
\[
   \widehat\pi(c)\in G_C,
   \qquad
   \pi\restriction A_0=\operatorname{id},
   \qquad
   \pi[A_1]\cap R=\emptyset,
\]
and such that the container choices in the shifted presentation avoid \(B\).
Since \(\widehat\pi\) is an automorphism of the reservoir product,
\(\widehat\pi(c)\) forces the \(\pi\)-shift of the statement forced by \(c\).
The
parameters \(x\) and \(p_0\) are fixed because their reservoir support is
contained in \(A_0\), and \(\pi\) fixes \(A_0\) pointwise.
Hence
\(\widehat\pi(c)\) forces that there is a clean sufficiently allowable forcing,
using the shifted auxiliary resources, which forces \(M_m(x,p_0)\).
These
shifted resources avoid \(B\) and \(R\) by construction.

Since \(\widehat\pi(c)\in G_C\), the actual reservoir extension contains such a
witness.
Call it \(\mathbb Q^*\). Then \(\mathbb Q^*\) is clean over the
forbidden resources and
\[
   \mathbb Q^*\forces M_m(x,p_0).
\]
\end{proof}

\begin{lemma}\label{lem:restriction-closure}
Let \(\mathbb R\) be a clean member of a sealed class
\(\Gamma^*_\rho(m,k,p)\), and let \(r\in\mathbb R\).
Then the restriction
\[
   \mathbb R\restriction r
   =
   \{s\in\mathbb R:s\leq r\}
\]
is again a clean member of \(\Gamma^*_\rho(m,k,p)\).
\end{lemma}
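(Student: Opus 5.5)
The plan is to show that restricting below a condition preserves each of the three defining requirements of membership in $\Gamma^*_\rho(m,k,p)$: legality and smallness of the presentation, $B_\rho$-cleanness (container avoidance, finite reuse, distinct traces), and the side-placement rule for the pair $(m,k,p)$. The key observation is that $\mathbb R\restriction r$ needs no new presentation. We take the fixed small presentation $\langle\forceP_\alpha,\dot\forceQ_\alpha:\alpha<\gamma\rangle$ of $\mathbb R$ and realize $\mathbb R\restriction r$ as the same iteration with each iterand restricted. Concretely, at coordinate $\alpha$ we use the $\forceP_\alpha$-name for $\dot\forceQ_\alpha\restriction r(\alpha)$ when $r\restriction\alpha$ is in the generic, and the trivial forcing otherwise. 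Since $r$ has countable support, only countably many iterands change.

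The first step is to check that this coordinatewise restriction is dense-isomorphic to $\{s\in\mathbb R:s\leq r\}$. This is the standard fact for countable-support iterations, proved by induction on $\alpha$ with the usual convention for limit coordinates. The second step is to check that each restricted iterand is still a legal direct coding block. A direct coding block $\forceB_{\eta,\nu,t}$ restricted below a countable partial function $r(\alpha)$ is again a product of branch forcings through the same trees $S^{X_t(\alpha')}_{\eta,\gamma,\alpha'}$, with countably many coordinates fixed to have stems. Its tree-coordinate support, container $\eta$ and reservoir index $\nu$ are unchanged. So the bookkeeping data for $\mathbb R\restriction r$ is literally the data for $\mathbb R$. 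From this, avoidance of $B_\rho$, finite use of each container, and pairwise distinct derived traces are all inherited, and the presentation stays in $H(\omega_2)$. If the definition of a legal block insists on the unrestricted form, we instead argue that $\forceB\restriction q$ is isomorphic to a block whose trees are replaced by the cones above the nodes of $q$. Cones of Suslin trees above a node are again Suslin trees with the $E$-good-level property, so Lemmas~\ref{lem:direct-block-E-complete} and \ref{lem:atomic-unused-coordinate-preservation} apply verbatim.

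The third step is the side-placement rule. This rule is a syntactic constraint on which tuple $(\rho,x,m,k,p,i)$ is coded at each stage, given what the sealed class can force. Restriction changes neither the tuples nor the containers. Whether a $B_\beta$-clean member of $\Gamma^*_\rho(m,k,p)$ forces $M_m(x,p)$ is evaluated in intermediate models $W[G_\beta]$, which are still reached, now through generics containing $r\restriction\beta$. So the side decisions recorded in the presentation remain the ones the rule demands.

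I expect this third step to be the main obstacle. Passing below $r$ could in principle change which intermediate models arise, so the side-placement decisions must be read off the presentation's names rather than re-evaluated. If needed, I would handle this with Lemma~\ref{lem:concat-new}-style reasoning. There, $\mathbb R\restriction r$ is viewed as the trivial concatenation of the fixed piece $r$ with the tail, and requirements already imposed on each piece are obeyed after concatenation.
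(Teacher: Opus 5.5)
Your proposal is correct and takes the same approach as the paper. The paper's proof just notes that $\mathbb R\restriction r$ uses the same presentation and the same containers and reservoir traces as $\mathbb R$, so avoidance of $B_\rho$, finite reuse with distinct traces, and the side-placement requirements carry over; your coordinatewise restriction of the iterands, and your remark that every generic for $\mathbb R\restriction r$ is also generic for $\mathbb R$ (so its side decisions were already made by the rule), only make that two-line argument explicit.
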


\begin{proof}
The restriction uses the same presentation and the same resources as
\(\mathbb R\).
It therefore avoids the same sealed containers, uses each
container only finitely often, and preserves all side-placement requirements.
\end{proof}

\begin{lemma}\label{lem:sigma11-upward}
If \(M_m(x,p)\) holds in an intermediate extension, then it remains true in every
further forcing extension preserving \(\omega_1\).
\end{lemma}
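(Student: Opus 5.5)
The plan is to show that a witness to $M_m(x,p)$ found in the intermediate extension is still a witness in any further extension. Work in an intermediate extension $V_1$ in which $M_m(x,p)$ holds. Then fix a witness $y\in(2^{\omega_1})^{V_1}$ with $\theta_m(x,y,p)$. We want this same $y$ to witness $M_m(x,p)$ in any outer model $V_2\supseteq V_1$ with $\omega_1^{V_2}=\omega_1^{V_1}$. Note that $x,y,p$ are functions with domain $\omega_1^{V_1}$. Since $\omega_1$ is preserved, they are still elements of $(2^{\omega_1})^{V_2}$.

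The key step is the upward absoluteness of the closed matrix $\theta_m$. Because $\theta_m$ is a lightface closed condition in the bounded topology, I will represent it by a tree $T_m$ on $(2^{<\omega_1})^3$, defined by a fixed recursive clause from the code $m$. Then $\theta_m(x,y,p)$ holds exactly when for every $\alpha<\omega_1$ we have $(x\restr\alpha,y\restr\alpha,p\restr\alpha)\in T_m$. Membership of a triple of countable sequences in $T_m$ is a $\Delta_1$ matter over $H(\omega_1)$, computed the same way in $V_1$ and $V_2$. This is where I expect the only real care to be needed. Two things must coincide in the two models:
\begin{itemize}
\item the tree $T_m$ restricted to the sequences that lie in $V_1$, which holds since $T_m$ is given by a fixed recursive clause;
\item the index set $\omega_1$ over which the universal quantifier runs, which holds by the hypothesis that $\omega_1$ is preserved.
\end{itemize}
New countable sequences appearing in $V_2$ are irrelevant, because the check only involves restrictions of the fixed $x,y,p$. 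Consequently $\theta_m(x,y,p)$ still holds in $V_2$, so $y$ witnesses $M_m(x,p)$ there.

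Equivalently, one may cite the $\Sigma_1(H(\omega_2))$ presentation from \cite{LueckeSigma11Definability}: $M_m(x,p)$ is $\Sigma_1$ over $H(\omega_2)$ with parameter $\omega_1$. Such statements are upward absolute between transitive models that agree on $\omega_1$, since the witness $y$ together with the $\Delta_0$ verification persists. I would mention this as a remark and rest the proof on the tree formulation.
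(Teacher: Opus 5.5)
Your proposal is correct and follows the paper's argument: keep the same witness $y$ from the intermediate model and use the fact that membership in the closed matrix $\theta_m$ is absolute once $\omega_1$ is preserved. Your tree and initial-segment argument just spells out this absoluteness step, which the paper states in one line.
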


\begin{proof}
Write \(M_m(x,p)\) as a \(\Sigma^1_1\) assertion
\[
   \exists y\,\theta_m(x,p,y),
\]
where \(\theta_m\) is closed.
If \(y\) witnesses the assertion in the
intermediate model, then the same \(y\) remains present in every further
extension.
Since closed membership is absolute once \(\omega_1\) is preserved,
\(\theta_m(x,p,y)\) remains true.
\end{proof}

\section{Correctness of the construction}\label{sec:correctness-new}

\begin{lemma}\label{lem:neutralization-persists-new}
If a pair $(m,k,p)$ is neutralized at some stage, then in every later stage
\[
   \exists z\,\bigl(M_m(z,p)\wedge M_k(z,p)\bigr)
\]
holds.
\end{lemma}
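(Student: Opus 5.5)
The plan is to split according to the two clauses of the global iteration under which a pair $(m,k,p)$ can be declared neutralized at a stage $\alpha$, and in each case to obtain the intersection statement $\exists z\,(M_m(z,p)\wedge M_k(z,p))$ in $W[G_{\alpha+1}]$. Once this is done, I will transport it to every later stage $\beta>\alpha$ using Lemma~\ref{lem:sigma11-upward}. By Lemma~\ref{lem:iteration-preservation-new}, every tail $\forceP_\beta/G_{\alpha+1}$ is $E$-proper and therefore preserves $\omega_1$.

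\emph{Case 1:} the pair is neutralized because $W[G_\alpha]$ already satisfies the intersection. Fix a witness $z$ together with witnesses $y_0,y_1$ for the two $\Sigma^1_1$ assertions $M_m(z,p)$ and $M_k(z,p)$. The conjunction $\theta_m(z,p,y_0)\wedge\theta_k(z,p,y_1)$ is a closed condition on objects that persist into all later models. Lemma~\ref{lem:sigma11-upward} then applies to each conjunct separately, and the same $z$ witnesses the intersection at every later stage.

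\emph{Case 2:} at stage $\alpha$ the construction inserts the $<_L$-least $B_\alpha$-clean allowable forcing $\dot\forceQ_\alpha$ that forces the intersection over $W[G_\alpha]$. The generic $G\restr(\alpha+1)$ restricts to a $\dot\forceQ_\alpha^{G_\alpha}$-generic over $W[G_\alpha]$. The forcing theorem therefore gives the intersection in $W[G_{\alpha+1}]$, and we finish exactly as in Case~1.

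The main point to verify is that the inserted iterand really is that allowable forcing, interpreted in $W[G_\alpha]$. This holds because the choice is made by evaluating in $W[G_\alpha]$ and uses a small presentation in $H(\omega_2)$, so the definition of the stage $\alpha$ iterand is exactly this forcing. A second point is that $\omega_1$ and the bounded topology do not change between stages, so that closedness of $\theta_m$ and $\theta_k$ is absolute. This follows because no reals and no new countable sequences of ordinals are added: the iteration is $E$-complete by Lemma~\ref{lem:E-complete-iteration}. I would also note, for later use, that by Lemma~\ref{lem:bounded-appearance-new} the statement persists to the final model $W[G]$ as well, since the final model is again an $\omega_1$-preserving extension.
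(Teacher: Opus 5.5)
Your proposal is correct and follows the paper's own argument: the intersection is witnessed by actual $z,y_0,y_1\in 2^{\omega_1}$, and these remain witnesses in every later $\omega_1$-preserving extension (Lemma~\ref{lem:sigma11-upward}). Your case split makes explicit what the paper's two-line proof leaves implicit, namely that in the insertion case the witnesses first exist in $W[G_{\alpha+1}]$ by the forcing theorem; the final appeal to Lemma~\ref{lem:bounded-appearance-new} is unnecessary, since persistence to the final model is again just Lemma~\ref{lem:sigma11-upward}.
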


\begin{proof}
The intersection statement is a $\Sigma^1_1$ assertion with actual witnesses
$z,y_0,y_1\in2^{\omega_1}$.
Once these witnesses exist, they remain witnesses
in every later forcing extension.
\end{proof}

\begin{lemma}\label{lem:no-bad-side-placements-new}
Suppose $(m,k,p)$ activates at stage $\rho$.
If at a later stage the construction codes
\[
   (\rho,x,m,k,p,0),
\]
then $M_k(x,p)$ is false in the final model.
If at a later stage the construction codes
\[
   (\rho,x,m,k,p,1),
\]
then $M_m(x,p)$ is false in the final model.
\end{lemma}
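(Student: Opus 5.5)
We argue both clauses by contradiction, using activation together with the tail-allowability of Lemma~\ref{lem:tails-allowable-new}.

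\emph{Side one.} Suppose $(\rho,x,m,k,p,1)$ is coded at stage $\beta\geq\rho$ but $M_m(x,p)$ holds in the final model $W[G]$.

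By Lemma~\ref{lem:bounded-appearance-new}, $M_m(x,p)$ already holds in some $W[G_\delta]$ with $\delta<\omega_2$, and we may take $\delta>\beta$. Its witness $y$ therefore has a $\forceP_\delta/G_\beta$-name. By Lemma~\ref{lem:tails-allowable-new}, the tail $\forceP_\delta/G_\beta$ is a $B_\rho$-clean member of $\Gamma^*_\rho(m,k,p)$. Pick $r\in G$ forcing $M_m(x,p)$ over $W[G_\beta]$; Lemma~\ref{lem:restriction-closure} says the restriction below $r$ is again a clean member of the sealed class.

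The remaining problem is that this tail uses containers in $B_\beta\setminus B_\rho$, whereas clause (i) of Definition~\ref{def:side-placement-new} asks for a $B_\beta$-clean witness. Here we invoke Lemma~\ref{lem:fresh-resource-relocation} with $B=B_\beta$ and $R=R_\beta$, both of size $<\omega_2$ by Lemmas~\ref{lem:sealed-size-new} and \ref{lem:trace-support-small}. The parameters $x,p$ have reservoir support of size $<\omega_2$, so we take that support as $A_0$ and relocate the auxiliary resources off $A_0$. This yields a $B_\beta$-clean member of $\Gamma^*_\rho(m,k,p)$ forcing $M_m(x,p)$. At stage $\beta$ clause (i) would then have applied, so the construction would have coded side $0$, a contradiction.

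This relocation step is the main obstacle. It needs two things: the existence of the witness must be forced by a condition in $G_C$ of small support, and the relocated forcing must still satisfy the side-placement rules. For the latter I would rely on the fact that those rules refer only to the parameters, which $\pi$ fixes.

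\emph{Side zero.} Suppose $(\rho,x,m,k,p,0)$ is coded at stage $\beta$ and $M_k(x,p)$ holds finally. Clause (i) supplied a $B_\beta$-clean $\mathbb Q\in\Gamma^*_\rho(m,k,p)$ with $\mathbb Q\forces M_m(x,p)$. The previous argument, applied to $M_k$, gives a witness of $M_k(x,p)$ in a bounded model $W[G_\delta]$, with the tail from $\rho$ lying in $\Gamma^*_\rho(m,k,p)$.

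I would combine the two witnesses by concatenation (Lemma~\ref{lem:concat-new}): first the real tail up to $\delta$, restricted below a condition forcing $M_k(x,p)$, then a relocated copy of $\mathbb Q$. The copy is obtained from Lemma~\ref{lem:fresh-resource-relocation}, with its containers and traces moved away from everything the tail uses, which keeps the concatenation clean. The concatenation is a $B_\rho$-clean allowable forcing over $W[G_\rho]$. By Lemma~\ref{lem:sigma11-upward}, $M_k(x,p)$ survives the second piece, and the second piece forces $M_m(x,p)$. Hence the concatenation forces $\exists z\,(M_m(z,p)\wedge M_k(z,p))$, witnessed by $z=x$.

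This contradicts activation at $\rho$. One subtlety must be handled: the object $x$ is added only after $\rho$. I would address it by relocating relative to names, treating $x$ as part of the tail rather than as a fixed parameter. That only shifts the relocation difficulty flagged above.
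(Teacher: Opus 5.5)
Your side-one argument rests on a false premise, and the detour it causes leaves the easy case incomplete. The tail $\mathbb P_\delta/G_\beta$ does \emph{not} use any container from $B_\beta\setminus B_\rho$. Those containers were used at stages in $[\rho,\beta)$, so they belong to $\mathbb P_\beta/G_\rho$, not to the tail from $\beta$. By the sealing rule of Definition~\ref{def:sealed-containers}, an iterand inserted at any stage $\alpha\geq\beta$ uses only containers outside $B_\alpha\supseteq B_\beta$. This includes the side-one coding at stage $\beta$ itself, and it is the argument of Lemma~\ref{lem:tails-allowable-new} with $\beta$ in place of $\rho$. Hence $(\mathbb P_\delta/G_\beta)\restriction r$ is already a $B_\beta$-clean member of $\Gamma^*_\rho(m,k,p)$ over $W[G_\beta]$ forcing $M_m(x,p)$, which contradicts the default side-one choice outright. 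That is the paper's entire side-one proof. By routing through Lemma~\ref{lem:fresh-resource-relocation} instead, you make this case depend on exactly the hypotheses you flag as the ``main obstacle'' and never verify. These are a small-support condition in $G_C$ forcing the existence of a witness whose resources are disjoint from the support of $x,p$, and the preservation of the sealed-class constraints under $\pi$. Drop the relocation there.

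Your side-zero argument is the paper's. You concatenate $\mathbb P_\beta/G_\rho$, the restricted tail $(\mathbb P_\gamma/G_\beta)\restriction r$ forcing $M_k(x,p)$, and a relocated copy $\mathbb Q^*$ of the side-zero witness. Then you use Lemma~\ref{lem:sigma11-upward} and contradict activation. Relocation is genuinely needed only here: $\mathbb Q$ was chosen $B_\beta$-clean in $W[G_\beta]$ and may collide with containers or traces used at stages in $[\beta,\gamma)$. The paper therefore applies Lemma~\ref{lem:fresh-resource-relocation} with $B=B_\gamma$ and $R=R_\gamma$.

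As for your worry that $x$ appears only after $\rho$: the paper performs the relocation in the extension by the first two factors, where $x$ and $p$ are actual parameters, so no relocation of names is needed. $\mathbb Q^*$ is simply the third factor of the iteration over $W[G_\rho]$. Formally, one takes a name for it and restricts below a condition of the first two factors forcing its properties, which Lemma~\ref{lem:restriction-closure} allows.
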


\begin{proof}
We first prove the side-zero case. Suppose that the construction codes
\[
   (\rho,x,m,k,p,0)
\]
at stage $\beta\geq\rho$.
By the definition of side zero, in $W[G_\beta]$
there is a $B_\beta$-clean member
\[
   \mathbb Q\in\Gamma^*_\rho(m,k,p)
\]
such that
\[
   \mathbb Q\forces M_m(x,p).
\]

Assume toward a contradiction that $M_k(x,p)$ holds in the final model.
By
Lemma~\ref{lem:bounded-appearance-new}, there is some $\gamma>\beta$ such that
the relevant witnesses already appear in $W[G_\gamma]$.
By the forcing theorem,
strengthening if necessary inside the actual quotient generic, there is a
condition
\[
   r\in \mathbb P_\gamma/G_\beta
\]
such that
\[
   r\forces_{\mathbb P_\gamma/G_\beta} M_k(x,p).
\]
Let
\[
   \mathbb R=(\mathbb P_\gamma/G_\beta)\restriction r.
\]
By Lemma~\ref{lem:tails-allowable-new} and Lemma~\ref{lem:restriction-closure},
$\mathbb R$ is a clean member of the sealed class
$\Gamma^*_\rho(m,k,p)$ over $W[G_\beta]$.
Now work over the activation model $W[G_\rho]$. Consider first the initial tail
\[
   \mathbb P_\beta/G_\rho,
\]
and then the restricted tail $\mathbb R$.
In the extension by these two factors,
the sets of containers and derived reservoir traces used before stage $\gamma$ are still
of size $<\omega_2$.
Choose reservoir supports for the parameters $x,p$ and a
reservoir condition in the fixed generic witnessing the statement that the
side-zero forcing exists.
Applying Lemma~\ref{lem:fresh-resource-relocation}
with
\[
   B=B_\gamma,
   \qquad
   R=R_\gamma,
\]
we obtain a fresh
\[
   \mathbb Q^*\in\Gamma^*_\rho(m,k,p)
\]
which avoids all containers and derived reservoir traces used before stage $\gamma$ and
satisfies
\[
   \mathbb Q^*\forces M_m(x,p).
\]

Thus the three-step forcing
\[
   (\mathbb P_\beta/G_\rho)
   *
   \mathbb R
   *
   \mathbb Q^*
\]
is a clean member of $\Gamma^*_\rho(m,k,p)$, by
Lemma~\ref{lem:concat-new}.
The middle factor $\mathbb R$ forces
$M_k(x,p)$. By Lemma~\ref{lem:sigma11-upward}, this remains true after the
further forcing with $\mathbb Q^*$.
The final factor $\mathbb Q^*$ forces
$M_m(x,p)$. Therefore the three-step forcing forces
\[
   M_m(x,p)\wedge M_k(x,p).
\]
In particular, it forces
\[
   \exists z\,\bigl(M_m(z,p)\wedge M_k(z,p)\bigr),
\]
with $z=x$.
This contradicts the activation of $(m,k,p)$ at stage $\rho$, since activation
says precisely that no clean member of $\Gamma^*_\rho(m,k,p)$ over $W[G_\rho]$
can force such an intersection.
Hence $M_k(x,p)$ is false in the final model.

We now prove the side-one case.
Suppose that the construction codes
\[
   (\rho,x,m,k,p,1)
\]
at stage $\beta\geq\rho$.
By the definition of the default side-one placement,
in $W[G_\beta]$ there is no $B_\beta$-clean member of
$\Gamma^*_\rho(m,k,p)$ forcing $M_m(x,p)$.
Assume toward a contradiction that $M_m(x,p)$ holds in the final model.
Again
by Lemma~\ref{lem:bounded-appearance-new} and the forcing theorem, there are
$\gamma>\beta$ and
\[
   r\in \mathbb P_\gamma/G_\beta
\]
such that
\[
   r\forces_{\mathbb P_\gamma/G_\beta} M_m(x,p).
\]
Let
\[
   \mathbb R=(\mathbb P_\gamma/G_\beta)\restriction r.
\]
By Lemma~\ref{lem:tails-allowable-new} and Lemma~\ref{lem:restriction-closure},
$\mathbb R$ is a $B_\beta$-clean member of $\Gamma^*_\rho(m,k,p)$.
But
$\mathbb R$ forces $M_m(x,p)$, contrary to the side-one choice made at stage
$\beta$. Therefore $M_m(x,p)$ is false in the final model.
\end{proof}

Let $(m,k,p)$ be a pair which is not neutralized in the final model, and let
$\rho$ be its activation stage.
The activation-stage sealed set $B_\rho$ is the
error term. Define
\[
   D_{m,k,p}=\{x\in2^{\omega_1}:\SepCoded^{B_\rho}_\rho(x,m,k,p,0)\}.
\]
Thus the separator ignores all containers which had already been used before the
pair activated.
\begin{lemma}[Total side assignment]\label{lem:total-side-assignment-new}
For every $x\in2^{\omega_1}$ in the final model, exactly one of
\[
   \SepCoded^{B_\rho}_\rho(x,m,k,p,0),
   \qquad
   \SepCoded^{B_\rho}_\rho(x,m,k,p,1)
\]
holds.
\end{lemma}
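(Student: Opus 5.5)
The statement splits into an existence half (at least one side is coded) and a uniqueness half (at most one side is coded). I would argue them separately, working in the final model $W[G]$ with $(m,k,p)$ fixed as a non-neutralized pair activated at stage $\rho$.

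\emph{At least one.} Fix $x\in2^{\omega_1}$ in the final model. By Lemma~\ref{lem:bounded-appearance-new}, $x$ (and $p$) lie in some bounded intermediate model $W[G_\delta]$. Since $F$ lists every relevant name unboundedly often, there is a stage $\beta\geq\max(\rho,\delta)$ at which $F(\beta)$ presents the tuple $(x,p,m,k)$. At that stage the pair is active and still not neutralized, so Definition~\ref{def:side-placement-new} applies. It codes $(\rho,x,m,k,p,i)$ for some $i<2$ into a fresh container $\eta\notin B_\beta$, and $B_\beta\supseteq B_\rho$ because the sealed sets increase. The container is used once, so the use is clean. Lemma~\ref{lem:intentional-direct-codes-decoded}, applied with $B=B_\rho$, then gives $\SepCoded^{B_\rho}_\rho(x,m,k,p,i)$ after that stage.

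This code has to survive the rest of the iteration. The witness is a transitive model $N$ coded by a subset of $\omega_1$, together with branches and the function $d$. Since later stages add no reals and preserve $\omega_1$ (Lemma~\ref{lem:iteration-preservation-new}), the witness stays a witness, by the same upward absoluteness as in Lemma~\ref{lem:sigma11-upward}.

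\emph{At most one.} Suppose both $\SepCoded^{B_\rho}_\rho(x,m,k,p,0)$ and $\SepCoded^{B_\rho}_\rho(x,m,k,p,1)$ hold in the final model. By Lemma~\ref{lem:no-unwanted-direct-codes}, applied to the whole clean run $\forceP_{\omega_2}$, each is witnessed by an intentional code in a container outside $B_\rho$. Any container outside $B_\rho$ that codes a tuple whose first coordinate is $\rho$ was used at a stage $\beta\geq\rho$ under the side-placement rule for this pair. So the construction intentionally coded both $(\rho,x,m,k,p,0)$ and $(\rho,x,m,k,p,1)$.

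Lemma~\ref{lem:no-bad-side-placements-new} then yields both $\neg M_k(x,p)$ and $\neg M_m(x,p)$ in the final model. This is not yet a contradiction, since $x$ may lie in neither set. So I would argue more directly from the construction. Whenever $x$ is presented at a later stage $\beta'$, the side-zero test is whether some $B_{\beta'}$-clean member of $\Gamma^*_\rho(m,k,p)$ forces $M_m(x,p)$. I would show this test is monotone in a suitable sense: if it succeeds at $\beta$, the relocation Lemma~\ref{lem:fresh-resource-relocation} should make it succeed at every later $\beta'$, and conversely a later success can be pulled back to an earlier one. Then every presentation of $x$ receives the same side, and the two intentional codes cannot disagree.

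The main obstacle is exactly this consistency of side decisions across different stages at which $x$ is presented. If the monotonicity argument fails, the fallback is to note that the bookkeeping can be arranged to code each $x$ only at its first presentation after $\rho$, treating later presentations as no-ops. That would require a small modification to how the global iteration handles repeated presentations.

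The remaining steps are routine given the cited lemmas. The one technical check is that Lemma~\ref{lem:no-unwanted-direct-codes} applies to the full length-$\omega_2$ run, which rests on Lemma~\ref{lem:iterated-unused-coordinate-preservation} together with the fact that $\forceP_{\omega_2}$ is a clean countable-support run of direct coding blocks.
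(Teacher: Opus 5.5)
Your existence half is the paper's argument, and your fallback for uniqueness is exactly the paper's proof: the construction records the side once a point has been assigned one for the activated pair and does nothing at later occurrences, so by Lemma~\ref{lem:no-unwanted-direct-codes} two opposite intentional codes for the same $x$ never arise. The monotonicity route you try first is left unproven and is not needed. You are right, though, that Lemma~\ref{lem:no-bad-side-placements-new} alone gives no contradiction, and that this ``record once'' clause is missing from the stated description of the global iteration; the paper only adds it in this proof.
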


\begin{proof}
By bounded appearance, $x$ and $p$ occur in some bounded intermediate model
after activation.
At a later bookkeeping occurrence of $(x,p,m,k)$, the
construction assigns one of the two sides and directly codes the corresponding
tuple into a fresh container outside the current sealed set, hence outside
$B_\rho$.
Thus at least one clean side code exists, i.e., at least one of the
two displayed $\SepCoded^{B_\rho}_\rho$ relations holds.
By the no-unwanted-code
lemma, Lemma~\ref{lem:no-unwanted-direct-codes}, every counted clean code is
intentional. The construction records once a point has been assigned a side for
the activated pair and does nothing at later occurrences of the same point.
Hence two opposite intentional clean side codes for the same $x$ cannot be
produced.
\end{proof}

\begin{lemma}\label{lem:separator-complexity-new}
The set $D_{m,k,p}$ is $\Delta^1_1$.
\end{lemma}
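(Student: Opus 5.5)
The argument exhibits both $D_{m,k,p}$ and its complement as $\Sigma^1_1$ sets in the final model $W[G]$. The $\Sigma^1_1$ description of $D_{m,k,p}$ itself comes directly from Lemma~\ref{lem:direct-sepcode-sigma11}. We apply it with the fixed ordinal $\rho<\omega_2$, the fixed $m,k$, the parameter $p$, the side $i=0$, and the sealed set $B=B_\rho$. By Lemma~\ref{lem:sealed-size-new} this set satisfies $|B_\rho|<\omega_2$, hence $B_\rho\in H(\omega_2)$. Under CH in the final model (Lemma~\ref{lem:iteration-preservation-new}), $B_\rho$ and $\rho$ can therefore be coded by a single element of $2^{\omega_1}$. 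Merging this code with $p$ gives one boldface parameter, so $x\mapsto \SepCoded^{B_\rho}_\rho(x,m,k,p,0)$ is boldface $\Sigma^1_1$.

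For the complement we use Lemma~\ref{lem:total-side-assignment-new}. For every $x\in 2^{\omega_1}$ of the final model, exactly one of the side-zero and side-one codes holds relative to $B_\rho$. Hence
\[
   2^{\omega_1}\setminus D_{m,k,p}=\{x:\SepCoded^{B_\rho}_\rho(x,m,k,p,1)\}.
\]
Applying Lemma~\ref{lem:direct-sepcode-sigma11} again, now with $i=1$ and the same parameter, shows that this set is also boldface $\Sigma^1_1$. Since $D_{m,k,p}$ and its complement are both $\Sigma^1_1$, $D_{m,k,p}$ is $\Delta^1_1$.

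The main point needing care is that both relations must be evaluated in the final model, so the totality and exclusivity of Lemma~\ref{lem:total-side-assignment-new} have to hold there. Exclusivity rests on the no-unwanted-code lemma, Lemma~\ref{lem:no-unwanted-direct-codes}, applied to the full iteration $\forceP_{\omega_2}$. This requires checking that the global iteration is itself a clean countable-support partial run of direct coding blocks, so that the preservation of unused tree coordinates (Lemma~\ref{lem:iterated-unused-coordinate-preservation}) applies to it. In the global iteration containers are sealed after use and never reused, so each container is used at most once; in particular the global run is $\emptyset$-clean. I would verify this first, and then check the remaining minor point: the $\Sigma_1(H(\omega_2))$ presentation of $\SepCoded^{B}$ agrees with the syntactic $M_m$-presentation of boldface $\Sigma^1_1$ once CH holds, which is the convention fixed in Section~\ref{sec:preliminaries}.
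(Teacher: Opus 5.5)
Your proposal is correct and is essentially the paper's proof. The paper gets $D_{m,k,p}\in\Sigma^1_1$ from Lemma~\ref{lem:direct-sepcode-sigma11}, with $B_\rho$ as a boldface parameter, and uses Lemma~\ref{lem:total-side-assignment-new} to identify the complement with the $\Sigma^1_1$ set defined by $\SepCoded^{B_\rho}_\rho(x,m,k,p,1)$.

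One small correction to your side remark: a container is not necessarily used only once. An allowable forcing inserted at a neutralization stage may use a container finitely often, with distinct reservoir traces. The global run is still clean, because containers are sealed across stages. Exclusivity also relies on the bookkeeping convention that a point already assigned a side is never recoded, not just on the no-unwanted-code lemma.
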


\begin{proof}
The positive definition is the $\Sigma^1_1$ relation
$\SepCoded^{B_\rho}_\rho(x,m,k,p,0)$, with $B_\rho$ used as a boldface
parameter.
By total side assignment, the complement is defined by the
$\Sigma^1_1$ relation $\SepCoded^{B_\rho}_\rho(x,m,k,p,1)$. Hence
$D_{m,k,p}$ is both $\Sigma^1_1$ and $\Pi^1_1$.
\end{proof}

\begin{lemma}\label{lem:left-coverage-new}
If $(m,k,p)$ is not neutralized and $M_m(x,p)$ holds in the final model, then
$x\in D_{m,k,p}$.
\end{lemma}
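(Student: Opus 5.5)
The proof combines total side assignment with the side-one half of Lemma~\ref{lem:no-bad-side-placements-new}.

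Fix $x$ in the final model with $M_m(x,p)$, and let $\rho$ be the activation stage of $(m,k,p)$. Since the pair is never neutralized, the construction keeps handling presented points for it after $\rho$. By Lemma~\ref{lem:total-side-assignment-new}, exactly one of
\[
   \SepCoded^{B_\rho}_\rho(x,m,k,p,0),
   \qquad
   \SepCoded^{B_\rho}_\rho(x,m,k,p,1)
\]
holds in the final model. It therefore suffices to rule out the side-one code, because the side-zero code is precisely the statement $x\in D_{m,k,p}$.

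Suppose, toward a contradiction, that $\SepCoded^{B_\rho}_\rho(x,m,k,p,1)$ holds. By the no-unwanted-code lemma, Lemma~\ref{lem:no-unwanted-direct-codes}, applied to the whole run $\forceP_{\omega_2}$, this code is witnessed by an intentional code. That is, there is a container $\eta\notin B_\rho$ which the construction actually used to code the tuple $(\rho,x,m,k,p,1)$.

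Such a code arises in only one way. Containers used for this pair are fresh containers outside $B_\beta\supseteq B_\rho$, and the tuple carries the index $\rho$. So the code was produced at some stage $\beta\geq\rho$ by the default clause (ii) of Definition~\ref{def:side-placement-new}. The side-one case of Lemma~\ref{lem:no-bad-side-placements-new} then gives that $M_m(x,p)$ is false in the final model, which contradicts our assumption. Hence the side-zero code holds and $x\in D_{m,k,p}$.

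The only delicate point is applying Lemma~\ref{lem:no-unwanted-direct-codes} to the full iteration. That lemma is stated for clean countable-support partial runs of direct coding blocks. The global iteration interleaves the coding blocks with inserted allowable forcings, and containers are sealed after use. Sealing gives each container at most one use, and all iterands are clean runs of direct coding blocks, so the global iteration is itself such a run and the lemma applies.

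A second point is that the code must be matched to the pair $(m,k,p)$ and the stage $\rho$. This comes from injectivity of $t\mapsto X_t$, which the decoding argument already uses. We also need $\eta\notin B_\rho$, which is built into the definition of $\SepCoded^{B_\rho}$. This excludes pre-activation noise, so the side-one witness really is a post-activation side-placement.
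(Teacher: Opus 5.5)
Your proof is correct within the paper's framework, but it runs in the opposite direction from the paper's. The paper argues positively: by Lemma~\ref{lem:bounded-appearance-new} the witnesses to $M_m(x,p)$ appear in some bounded $W[G_\delta]$. At a later bookkeeping occurrence of $x$, the trivial forcing is then a clean member of $\Gamma^*_\rho(m,k,p)$ forcing $M_m(x,p)$. So clause (i) of Definition~\ref{def:side-placement-new} fires, and side zero is written into a fresh container outside $B_\rho$.

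You instead take the existence of some side code from Lemma~\ref{lem:total-side-assignment-new}. You then exclude side one using Lemma~\ref{lem:no-unwanted-direct-codes} and the side-one half of Lemma~\ref{lem:no-bad-side-placements-new}. This is exactly the mirror image of the paper's proof of Lemma~\ref{lem:right-avoidance-new}.

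\textbf{Comparison.} The paper's route is shorter, but it tacitly assumes $x$ has not already received a side at an earlier occurrence. The construction records the first assignment and ignores later occurrences (see the proof of Lemma~\ref{lem:total-side-assignment-new}). So a default side-one assignment made before the witnesses to $M_m(x,p)$ appeared would block clause (i) later. Excluding that scenario needs precisely the side-one half of Lemma~\ref{lem:no-bad-side-placements-new}. Your argument handles this timing issue uniformly, whatever the stage of assignment. The cost is invoking more machinery, though only the easy, relocation-free half of that lemma.

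\textbf{Minor point.} Sealing does not give each container at most one use. An inserted allowable forcing may use a container finitely often, with distinct reservoir traces, within a single stage. What sealing guarantees is that all uses of a given container occur within one stage, since $B_{\alpha+1}=B_\alpha\cup E^{\rm cont}_\alpha$ and later stages avoid $B_{\alpha+1}$. That still makes the global iteration a clean run in the sense of Definition~\ref{def:clean-run-new}, so your application of Lemma~\ref{lem:no-unwanted-direct-codes} goes through.

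Similarly, the fact that the side-one code was produced at a stage $\geq\rho$ comes from $\eta\notin B_\rho$ together with $B_\rho$ containing every container used before stage $\rho$. It does not come from the freshness of containers chosen for this pair. You do state this correctly at the end.
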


\begin{proof}
By Lemma~\ref{lem:bounded-appearance-new}, the witnesses to $M_m(x,p)$ occur in
some bounded intermediate model.
At a later bookkeeping occurrence, the trivial
forcing is a clean member of the sealed class which forces $M_m(x,p)$.
Therefore
the construction assigns side zero to $x$ and writes the code into a fresh
container outside the current sealed set, in particular outside $B_\rho$.
\end{proof}

\begin{lemma}\label{lem:right-avoidance-new}
If $(m,k,p)$ is not neutralized, then
\[
   D_{m,k,p}\cap\{x:M_k(x,p)\}=\emptyset.
\]
\end{lemma}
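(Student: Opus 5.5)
The plan is to argue by contradiction, chaining the no-unwanted-code lemma with the side-placement analysis. Suppose $(m,k,p)$ is not neutralized, $\rho$ is its activation stage, and some $x$ in the final model satisfies both $x\in D_{m,k,p}$ and $M_k(x,p)$.

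First I would unfold membership in $D_{m,k,p}$: $\SepCoded^{B_\rho}_\rho(x,m,k,p,0)$ holds in $W[G]$, so some container $\eta\notin B_\rho$ carries a code for the tuple $t=(\rho,x,m,k,p,0)$. The whole iteration over $W$ is a clean countable-support partial run of direct coding blocks, since each container is used finitely often, with distinct reservoir traces, and is sealed afterwards. So Lemma~\ref{lem:no-unwanted-direct-codes} applies to $\forceP_{\omega_2}$. It yields that $\eta$ was intentionally used to code exactly the tuple $t$.

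Next I would identify where that intentional use occurred. The only stages that code tuples whose first coordinate is $\rho$ and which mention $(m,k,p)$ are the side-placement stages of Definition~\ref{def:side-placement-new}. These stages occur at some $\beta\geq\rho$ after activation. Tuples coded inside neutralizing allowable forcings at stages before $\rho$ would carry a different index; in any case they lie in containers in $B_\rho$, which the decoder ignores. Hence at some stage $\beta\geq\rho$ the construction deliberately coded $(\rho,x,m,k,p,0)$ on side zero.

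Then Lemma~\ref{lem:no-bad-side-placements-new}, side-zero case, gives that $M_k(x,p)$ is false in the final model, contradicting our assumption. So $D_{m,k,p}$ is disjoint from $\{x:M_k(x,p)\}$.

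The main obstacle is the second step: ruling out an intentional code of $t$ that arose outside a legitimate post-activation side-zero placement. One source would be an allowable forcing inserted by the $<_L$-least rule for some other pair, or any block placed before $\rho$. The error term $B_\rho$ handles everything used before activation. After activation, I would argue that neutralizing forcings for other pairs code tuples whose first coordinate is their own activation index, and the injectivity of $t\mapsto X_t$ distinguishes these. If that bookkeeping claim is not immediate, the fallback is to impose the convention that only side-placement stages ever code tuples with first coordinate $\rho$ for the pair $(m,k,p)$.
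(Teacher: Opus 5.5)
Your argument is correct and is the paper's proof: you unfold $x\in D_{m,k,p}$ to a side-zero code in a container outside $B_\rho$, make it intentional via Lemma~\ref{lem:no-unwanted-direct-codes}, use the error term $B_\rho$ to place it after activation, and conclude with the side-zero case of Lemma~\ref{lem:no-bad-side-placements-new}. Your activation-index argument for the extra worry does not work as stated, since neutralized pairs have no activation index and allowable forcings may code arbitrary tuples; the paper instead relies on exactly your fallback, built into the definition of $\Gamma^*_\rho(m,k,p)$ (obeying the side-placement rule) together with Lemma~\ref{lem:tails-allowable-new}, which puts every post-activation tail in that class.
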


\begin{proof}
If $x\in D_{m,k,p}$, then some container outside the error term $B_\rho$
witnesses $\SepCoded^{B_\rho}_\rho(x,m,k,p,0)$.
By
Lemma~\ref{lem:no-unwanted-direct-codes}, this side-zero code is intentional.
Since containers outside $B_\rho$ were not used before activation, the
intentional code was produced after the pair activated.
Lemma~\ref{lem:no-bad-side-placements-new} says that such a point never enters
the $k$-side.
\end{proof}

\begin{theorem}[Main theorem]\label{thm:main-separation}
In the final forcing extension, every two disjoint boldface $\Sig^1_1$ subsets
of $2^{\omega_1}$ are separated by a boldface $\Del^1_1$ set.
Moreover
\[
   2^\omega=\omega_1,
   \qquad
   2^{\omega_1}=\omega_2.
\]
\end{theorem}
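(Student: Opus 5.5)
The proof will assemble the lemmas of Section~\ref{sec:correctness-new}; the cardinal arithmetic is simply Lemma~\ref{lem:iteration-preservation-new}.

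\textbf{Reducing to the syntactic presentation.} Let $A_0,A_1\subseteq2^{\omega_1}$ be disjoint boldface $\Sig^1_1$ sets in the final model $W[G]$. Since CH holds there, Section~\ref{sec:preliminaries} lets me write them as $A_0=\{x:M_m(x,p_0)\}$ and $A_1=\{x:M_k(x,p_1)\}$. I merge the two parameters into a single $p$ by interleaving, and adjust the codes $m,k$ so that $A_0=\{x:M_m(x,p)\}$ and $A_1=\{x:M_k(x,p)\}$. By Lemma~\ref{lem:bounded-appearance-new}, $p$ lies in some $W[G_\delta]$ with $\delta<\omega_2$. Because $F$ lists every relevant name unboundedly often, the triple $(m,k,p)$ is handled at cofinally many stages.

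\textbf{The pair is activated.} First I show that $(m,k,p)$ is never neutralized. There are two ways neutralization could happen:
\begin{enumerate}[label=\textup{(\alph*)}]
\item the current model already satisfies $\exists z\,(M_m(z,p)\wedge M_k(z,p))$;
\item a clean allowable forcing is inserted that forces this statement.
\end{enumerate}
In either case the witness $z$ exists in $W[G_{\alpha+1}]$. By Lemma~\ref{lem:neutralization-persists-new} (equivalently Lemma~\ref{lem:sigma11-upward}, using that $\omega_1$ is preserved), $z$ then lies in $A_0\cap A_1$ in the final model, which contradicts disjointness. So at the first stage $\rho\ge\delta$ at which the bookkeeping presents the pair, neither case applies, and the construction activates the pair at $\rho$ with error term $B_\rho$.

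\textbf{The separator.} Set $D=D_{m,k,p}$. Lemma~\ref{lem:separator-complexity-new} gives that $D$ is $\Del^1_1$; this relies on Lemma~\ref{lem:direct-sepcode-sigma11} with the parameter $B_\rho\in H(\omega_2)$, which is available by Lemma~\ref{lem:sealed-size-new}. Lemma~\ref{lem:left-coverage-new} gives $A_0\subseteq D$, and Lemma~\ref{lem:right-avoidance-new} gives $D\cap A_1=\emptyset$.

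\textbf{Expected obstacle.} The only delicate point is making sure the hypotheses of the cited lemmas are satisfied in the final model and are not merely properties of some bounded stage. This needs two checks:
\begin{enumerate}[label=\textup{(\roman*)}]
\item The final model must satisfy the no-unwanted-code Lemma~\ref{lem:no-unwanted-direct-codes}. The whole iteration $\forceP_{\omega_2}$ is a clean countable-support run of direct coding blocks, since containers are sealed after use, and iterands inserted for neutralization are excluded here by the previous step.
\item Total side assignment has to hold for every $x$, including those appearing late. This again comes from bounded appearance together with the bookkeeping.
\end{enumerate}
I would state both explicitly and then cite the lemmas.
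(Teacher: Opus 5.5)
Your proposal follows the paper's proof essentially step for step: cardinal arithmetic from Lemma~\ref{lem:iteration-preservation-new}, non-neutralization via persistence of an intersection witness (Lemma~\ref{lem:neutralization-persists-new}), hence activation at some $\rho$, and then $D_{m,k,p}$ with Lemmas~\ref{lem:left-coverage-new}, \ref{lem:right-avoidance-new} and \ref{lem:separator-complexity-new}; your parameter-merging and bookkeeping remarks only make explicit what the paper leaves implicit. One small correction to your remark (i): your previous step does not exclude neutralization iterands inserted for \emph{other} pairs, but these are themselves $B_\alpha$-clean allowable forcings (clean iterations of direct coding blocks on fresh containers), so all of $\forceP_{\omega_2}$ is still a clean run and Lemma~\ref{lem:no-unwanted-direct-codes} applies.
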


\begin{proof}
The cardinal arithmetic was proved in
Lemma~\ref{lem:iteration-preservation-new}.
Let the two disjoint
$\Sigma^1_1$ sets be defined by $M_m(x,p)$ and $M_k(x,p)$.
If the pair
$(m,k,p)$ were neutralized, Lemma~\ref{lem:neutralization-persists-new} would
give an intersection in the final model.
Hence the pair is active at some
stage $\rho$. Form $D_{m,k,p}$ as above. By Lemma~\ref{lem:left-coverage-new},
the $m$-side is contained in $D_{m,k,p}$.
By
Lemma~\ref{lem:right-avoidance-new}, $D_{m,k,p}$ is disjoint from the $k$-side.
By Lemma~\ref{lem:separator-complexity-new}, $D_{m,k,p}$ is $\Delta^1_1$.
\end{proof}

\section{Open questions}\label{sec:open-questions}

\begin{question}
Can the direct $\omega_1$-length coding mechanism be strengthened to handle
reduction, not only separation?
\end{question}

\begin{question}
Does the separation model have consequences for the Borel$^*$ versus
$\Delta^1_1$ problem beyond the separation consequence proved here?
\end{question}

\begin{question}
Can the same direct-code architecture be lifted to regular $\kappa>
\omega_1$ satisfying $\kappa^{<\kappa}=\kappa$?
\end{question}

\bibliographystyle{plain}
\bibliography{omega1_sigma11_separation_references}

@incollection{AbrahamProperForcing,
  author    = {Abraham, Uri},
  title     = {Proper Forcing},
  booktitle = {Handbook of Set Theory},
  editor    = {Foreman, Matthew and Kanamori, Akihiro},
  publisher = {Springer},
  address   = {Dordrecht},
  year      = {2010},
  pages     = {333--394}
}

@article{AgostiniChapmanMottoRosPittonBorel,
  author        = {Agostini, Claudio and Chapman, Nick and Motto Ros, Luca and Pitton, Beatrice},
  title         = {Generalized {B}orel Sets},
  journal       = {arXiv preprint arXiv:2511.15663},
  year          = {2025},
  eprint        = {2511.15663},
  archivePrefix = {arXiv},
  primaryClass  = {math.LO}
}

@article{AgostiniMottoRosSchlichtPolish,
  author  = {Agostini, Claudio and Motto Ros, Luca and Schlicht, Philipp},
  title   = {Generalized {P}olish spaces at regular uncountable cardinals},
  journal = {Journal of the London Mathematical Society},
  volume  = {108},
  number  = {5},
  pages   = {1886--1929},
  year    = {2023},
  doi     = {10.1112/jlms.12797}
}

@article{BaumgartnerHarringtonKleinberg,
  author  = {Baumgartner, James E. and Harrington, Leo A. and Kleinberg, Eugene M.},
  title   = {Adding a closed unbounded set},
  journal = {Journal of Symbolic Logic},
  volume  = {41},
  number  = {2},
  pages   = {481--482},
  year    = {1976},
  doi     = {10.2307/2272248}
}

@article{DavidVeryAbsolute,
  author  = {David, Ren{\'e}},
  title   = {A very absolute {$\Pi^1_2$}-real singleton},
  journal = {Annals of Mathematical Logic},
  volume  = {23},
  pages   = {101--120},
  year    = {1982}
}

@article{FriedmanHoffelnerNS,
  author  = {Friedman, Sy-David and Hoffelner, Stefan},
  title   = {A {$\Sigma^1_4$} wellorder of the reals with {${\rm NS}_{\omega_1}$} saturated},
  journal = {Journal of Symbolic Logic},
  volume  = {84},
  number  = {4},
  pages   = {1466--1483},
  year    = {2019},
  doi     = {10.1017/jsl.2019.43}
}

@book{FriedmanHyttinenWeinstein,
  author    = {Friedman, Sy-David and Hyttinen, Tapani and Kulikov, Vadim},
  title     = {Generalized Descriptive Set Theory and Classification Theory},
  series    = {Memoirs of the American Mathematical Society},
  volume    = {230},
  publisher = {American Mathematical Society},
  year      = {2014},
  note      = {No. 1081}
}

@article{FriedmanKhomskiiKulikovRegularity,
  author  = {Friedman, Sy-David and Khomskii, Yurii and Kulikov, Vadim},
  title   = {Regularity properties on the generalized reals},
  journal = {Annals of Pure and Applied Logic},
  volume  = {167},
  number  = {4},
  pages   = {408--430},
  year    = {2016},
  doi     = {10.1016/j.apal.2016.01.001}
}

@article{FuchsHamkinsDegrees,
  author  = {Fuchs, Gunter and Hamkins, Joel David},
  title   = {Degrees of rigidity for {S}ouslin trees},
  journal = {Journal of Symbolic Logic},
  volume  = {74},
  number  = {2},
  pages   = {423--454},
  year    = {2009},
  doi     = {10.2178/jsl/1243948321}
}

@article{HoffelnerFailureReductionPresenceSeparation,
  author        = {Hoffelner, Stefan},
  title         = {A Failure of {$\Pi^1_{n+3}$}-Reduction in the Presence of {$\Sigma^1_{n+3}$}-Separation},
  journal       = {arXiv preprint arXiv:2312.02540},
  year          = {2023},
  eprint        = {2312.02540},
  archivePrefix = {arXiv},
  primaryClass  = {math.LO}
}

@article{HoffelnerForcingAxiomsUniformization,
  author  = {Hoffelner, Stefan},
  title   = {Forcing axioms and the uniformization-property},
  journal = {Annals of Pure and Applied Logic},
  volume  = {175},
  number  = {10},
  pages   = {103466},
  year    = {2024},
  doi     = {10.1016/j.apal.2024.103466}
}

@article{HoffelnerGlobalSigmaBPFA,
  author  = {Hoffelner, Stefan},
  title   = {The global {$\Sigma^1_{n+2}$}-uniformization property and {$\mathsf{BPFA}$}},
  journal = {Advances in Mathematics},
  volume  = {470},
  pages   = {110272},
  year    = {2025},
  doi     = {10.1016/j.aim.2025.110272}
}

@article{HoffelnerLarsonSchindlerWu,
  author  = {Hoffelner, Stefan and Larson, Paul B. and Schindler, Ralf and Wu, Liuzhen},
  title   = {Forcing axioms and the definability of the nonstationary ideal on the first uncountable},
  journal = {Journal of Symbolic Logic},
  volume  = {89},
  number  = {4},
  pages   = {1641--1658},
  year    = {2024},
  doi     = {10.1017/jsl.2023.40}
}

@article{HoffelnerMAIFailureSeparation,
  author  = {Hoffelner, Stefan},
  title   = {{$\mathsf{MA}(\mathcal I)$} and a failure of separation on the third level},
  journal = {Annals of Pure and Applied Logic},
  volume  = {177},
  number  = {3},
  pages   = {103667},
  year    = {2026},
  doi     = {10.1016/j.apal.2025.103667}
}

@article{HoffelnerNSDeltaOne,
  author  = {Hoffelner, Stefan},
  title   = {{${\rm NS}_{\omega_1}$} saturated and {$\Delta_1$}-definable},
  journal = {Journal of Symbolic Logic},
  volume  = {86},
  number  = {1},
  pages   = {25--59},
  year    = {2021},
  doi     = {10.1017/jsl.2021.23}
}

@article{HoffelnerPi13Reduction,
  author  = {Hoffelner, Stefan},
  title   = {Forcing the {$\Pi^1_3$}-reduction property and a failure of {$\Pi^1_3$}-uniformization},
  journal = {Annals of Pure and Applied Logic},
  volume  = {174},
  number  = {8},
  pages   = {103292},
  year    = {2023},
  doi     = {10.1016/j.apal.2023.103292}
}

@article{HoffelnerPi13UniformizationWellorder,
  author        = {Hoffelner, Stefan},
  title         = {A Universe with a {$\Delta^1_n$}-definable well-order of the reals, {$\mathsf{CH}$} and {$\Pi^1_n$}-Uniformization},
  journal       = {arXiv preprint arXiv:2506.21778},
  year          = {2025},
  eprint        = {2506.21778},
  archivePrefix = {arXiv},
  primaryClass  = {math.LO}
}

@article{HoffelnerPi1nUniformization,
  author        = {Hoffelner, Stefan},
  title         = {Forcing the {$\Pi^1_n$}-Uniformization Property},
  journal       = {arXiv preprint arXiv:2103.11748},
  year          = {2021},
  eprint        = {2103.11748},
  archivePrefix = {arXiv},
  primaryClass  = {math.LO}
}

@article{HoffelnerSigma13Separation,
  author  = {Hoffelner, Stefan},
  title   = {Forcing the {$\Sigma^1_3$}-separation property},
  journal = {Journal of Mathematical Logic},
  volume  = {22},
  number  = {2},
  pages   = {2250008},
  year    = {2022},
  doi     = {10.1142/S0219061322500088}
}

@article{HoffelnerLargeContinuumGlobalSigmaWellorder,
  author        = {Hoffelner, Stefan},
  title         = {A Universe with large Continuum, global {$\Sigma$}-Uniformization and a projective Well-Order of its Reals},
  journal       = {arXiv preprint arXiv:2506.12393},
  year          = {2025},
  eprint        = {2506.12393},
  archivePrefix = {arXiv},
  primaryClass  = {math.LO}
}

@article{HoffelnerUpperSigmaUniformization,
  author        = {Hoffelner, Stefan},
  title         = {Forcing upper {$\Sigma$}-uniformization in the presence of lower {$\Pi$}-reduction or uniformization},
  journal       = {arXiv preprint arXiv:2511.05081},
  year          = {2025},
  eprint        = {2511.05081},
  archivePrefix = {arXiv},
  primaryClass  = {math.LO}
}

@article{HoffelnerSigma13Sigma14Uniformization,
  author        = {Hoffelner, Stefan},
  title         = {On {$\boldsymbol{\Sigma}^1_3$}- and {$\Sigma^1_4$}-uniformization},
  journal       = {arXiv preprint arXiv:2604.19360},
  year          = {2026},
  eprint        = {2604.19360},
  archivePrefix = {arXiv},
  primaryClass  = {math.LO}
}

@misc{HoffelnerDelfinoLocal,
  author = {Hoffelner, Stefan},
  title  = {On a local variant of the 12th {D}elfino problem},
  year   = {2025},
  note   = {Preprint}
}

@misc{HoffelnerDelfinoLocalII,
  author = {Hoffelner, Stefan},
  title  = {Coding and {$\infty$}-allowability for the local {D}elfino construction},
  year   = {2025},
  note   = {Preprint}
}

@incollection{HyttinenKulikovBorelStar,
  author    = {Hyttinen, Tapani and Kulikov, Vadim},
  title     = {{Borel$^*$} Sets in the Generalized {B}aire Space and Infinitary Languages},
  booktitle = {Jaakko Hintikka on Knowledge and Game-Theoretical Semantics},
  editor    = {van Ditmarsch, Hans and Sandu, Gabriel},
  series    = {Outstanding Contributions to Logic},
  volume    = {12},
  publisher = {Springer},
  address   = {Cham},
  pages     = {395--412},
  year      = {2018},
  doi       = {10.1007/978-3-319-62864-6_16}
}

@book{JechSetTheory,
  author    = {Jech, Thomas},
  title     = {Set Theory},
  series    = {Springer Monographs in Mathematics},
  edition   = {Third Millennium},
  publisher = {Springer},
  year      = {2003}
}

@incollection{JensenSolovay,
  author    = {Jensen, Ronald B. and Solovay, Robert M.},
  title     = {Some applications of almost disjoint sets},
  booktitle = {Mathematical Logic and Foundations of Set Theory},
  series    = {Studies in Logic and the Foundations of Mathematics},
  volume    = {59},
  publisher = {North-Holland},
  pages     = {84--104},
  year      = {1970}
}

@book{KechrisClassicalDST,
  author    = {Kechris, Alexander S.},
  title     = {Classical Descriptive Set Theory},
  series    = {Graduate Texts in Mathematics},
  volume    = {156},
  publisher = {Springer},
  year      = {1995}
}

@article{KhomskiiLaguzziLoeweSharankouQuestions,
  author  = {Khomskii, Yurii and Laguzzi, Giorgio and L{\"o}we, Benedikt and Sharankou, Ilya},
  title   = {Questions on generalised {B}aire spaces},
  journal = {Mathematical Logic Quarterly},
  volume  = {62},
  number  = {4--5},
  pages   = {439--456},
  year    = {2016},
  doi     = {10.1002/malq.201600051}
}

@article{LueckeMottoRosSchlichtHurewicz,
  author  = {L{\"u}cke, Philipp and Motto Ros, Luca and Schlicht, Philipp},
  title   = {The {H}urewicz dichotomy for generalized {B}aire spaces},
  journal = {Israel Journal of Mathematics},
  volume  = {216},
  number  = {2},
  pages   = {973--1022},
  year    = {2016},
  doi     = {10.1007/s11856-016-1435-1}
}

@article{LueckeSchlichtContinuousImages,
  author  = {L{\"u}cke, Philipp and Schlicht, Philipp},
  title   = {Continuous images of closed sets in generalized {B}aire spaces},
  journal = {Israel Journal of Mathematics},
  volume  = {209},
  number  = {1},
  pages   = {421--461},
  year    = {2015},
  doi     = {10.1007/s11856-015-1224-2}
}

@article{LueckeSigma11Definability,
  author  = {L{\"u}cke, Philipp},
  title   = {{$\Sigma^1_1$}-definability at uncountable regular cardinals},
  journal = {Journal of Symbolic Logic},
  volume  = {77},
  number  = {3},
  pages   = {1011--1046},
  year    = {2012}
}

@article{MeklerVaananen,
  author  = {Mekler, Alan and V{\"a}{\"a}n{\"a}nen, Jouko},
  title   = {Trees and {$\Pi^1_1$}-subsets of {${}^{\omega_1}\omega_1$}},
  journal = {Journal of Symbolic Logic},
  volume  = {58},
  number  = {3},
  pages   = {1052--1070},
  year    = {1993}
}

@article{MiyamotoSouslinCSIterations,
  author  = {Miyamoto, Tadatoshi},
  title   = {{$\omega_1$}-{S}ouslin trees under countable support iterations},
  journal = {Fundamenta Mathematicae},
  volume  = {142},
  number  = {3},
  pages   = {257--261},
  year    = {1993}
}

@book{MoschovakisDST,
  author    = {Moschovakis, Yiannis N.},
  title     = {Descriptive Set Theory},
  series    = {Mathematical Surveys and Monographs},
  volume    = {155},
  publisher = {American Mathematical Society},
  year      = {2009}
}

\end{document}